\g@addto@macro{\endabstract}{\@setabstract}
\theoremstyle{plain}
\newtheorem{theorem}{Theorem}
\newtheorem{thm}{Theorem}[subsection]
\newtheorem{lem}[thm]{Lemma}
\newtheorem{prop}[thm]{Proposition}
\newtheorem{cor}[thm]{Corollary}
\newcommand\scalemath[2]{\scalebox{#1}{\mbox{\ensuremath{\displaystyle #2}}}}
\theoremstyle{plain}
\newtheorem{defn}[thm]{Definition}
\theoremstyle{remark}
\newtheorem{rem}[thm]{Remark}
\newcommand{\mycomment}[1]{}
\DeclareMathOperator{\GL}{GL}
\DeclareMathOperator{\ch}{ch}
\DeclareMathOperator{\Hom}{Hom}
\DeclareMathOperator{\vol}{vol}
\newcommand{\authorfootnotes}{\renewcommand\thefootnote{\@fnsymbol\c@footnote}}%
\newcommand\blfootnote[1]{%
  \begingroup
  \renewcommand\thefootnote{}\footnote{#1}%
  \addtocounter{footnote}{-1}%
  \endgroup
}
\renewcommand{\tocsection}[3]{%
  \indentlabel{\@ifnotempty{#2}{\bfseries\ignorespaces#1 #2\quad}}\bfseries#3}
\renewcommand{\tocsubsection}[3]{%
  \indentlabel{\@ifnotempty{#2}{\ignorespaces#1 #2\quad}}#3}
\renewcommand{\tocsubsubsection}[3]{%
  \indentlabel{\@ifnotempty{#2}{\ignorespaces#1 #2\quad}}#3}
\newcommand\@dotsep{4.5}
\def\@tocline#1#2#3#4#5#6#7{\relax
  \ifnum #1>\c@tocdepth 
  \else
    \par \addpenalty\@secpenalty\addvspace{#2}%
    \begingroup \hyphenpenalty\@M
    \@ifempty{#4}{%
      \@tempdima\csname r@tocindent\number#1\endcsname\relax
    }{%
      \@tempdima#4\relax
    }%
    \parindent\z@ \leftskip#3\relax \advance\leftskip\@tempdima\relax
    \rightskip\@pnumwidth plus1em \parfillskip-\@pnumwidth
    #5\leavevmode\hskip-\@tempdima{#6}\nobreak
    \leaders\hbox{$\m@th\mkern \@dotsep mu\hbox{.}\mkern \@dotsep mu$}\hfill
    \nobreak
    \hbox to\@pnumwidth{\@tocpagenum{\ifnum#1=1\bfseries\fi#7}}\par
    \nobreak
    \endgroup
  \fi}
\renewcommand\csname r@tocindent0\endcsname{0pt}
\def\l@subsection{\@tocline{2}{0pt}{2.5pc}{5pc}{}}
\def\l@subsubsection{\@tocline{2}{0pt}{4pc}{5pc}{}}
\date{} 
\begin{document}

\begin{center}
  \LARGE 
 INTEGRAL STRUCTURES IN SMOOTH $\GL_2(\mathbf{Q}_p)$-REPRESENTATIONS\\
 AND ZETA INTEGRALS

  \normalsize
  \bigskip
  Alexandros Groutides \par 

  Mathematics Institute, University of Warwick\par
\end{center}

\begin{abstract}
Using zeta-integrals and lattices of functions on a spherical variety, we study integral structures in spherical representations of $\GL_2(\mathbf{Q}_p)$ and their interaction with the unique linear functional invariant under an unramified maximal torus. Within this framework, we reformulate and prove the first instance of optimality of abstract integral norm-relations as proposed by Loeffler. We also interpret this as a form of integrality for toric periods associated to modular forms, where part of it can be regarded as an arithmetic integral analogue of Waldspurger's multiplicity one in the unramified setting.
\end{abstract}
\blfootnote{We gratefully acknowledge support from the following research grant: ERC Grant No. 101001051—Shimura varieties and the Birch–Swinnerton-Dyer conjecture.}
\vspace{-2em}
\section{Introduction}
 \subsection{Main results}This paper is devoted to the study of integral structures in spherical Whittaker-type representations of $\GL_2(\mathbf{Q}_p)$ and their interaction with the unique period invariant under the action of an unramified maximal torus $H$. Integral structures and distinction of smooth representations are areas of independent interest. Thus, it is natural to ask how they interact with one another. Motivated by the relative Langlands philosophy of Ben-Zvi–Sakellaridis–Venkatesh \cite{benzvi2024relativelanglandsduality}, we reduce this to studying integral lattices with level structure inside spaces of functions on the spherical variety $H\backslash(\GL_2\times H)$, and equivariance properties with respect to the Hecke algebra. Mapping to the unramified principal-series of $\GL_2(\mathbf{Q}_p)$ we obtain an integral variant of Waldspurger's multiplicity one for unramified toric periods \cite{Waldspurger}, under a certain integrality condition on the spherical Hecke eigensystem. This condition is arithmetic, and naturally satisfied simultaneously at all spherical components of a modular form.

 Additionally, we show that the integral ideal corresponding to input data in the image of the norm map from a ramified level at $p$, coincides with the ideal of the integral Hecke algebra, generated by an ``$L$-factor'', modulo some appropriate integer. This can be regarded as the first instance of integral optimality for abstract norm-relations, as proposed by Loeffler.

Let $p$ be an odd prime. Let $\mathfrak{G}$ be the group $\GL_2\times H$ and $\mathfrak{H}$ the subgroup given by the image of $H$ in $\mathfrak{G}$ under the embedding $h\mapsto (h,h^{-1})$. Let $V$ be any smooth $\mathfrak{G}(\mathbf{Q}_p)$-representation, and 
$$\mathfrak{z}:C_c^\infty(\mathfrak{H}(\mathbf{Q}_p)\backslash\mathfrak{G}(\mathbf{Q}_p),\mathbf{C})\longrightarrow V$$
a $\mathfrak{G}(\mathbf{Q}_p)$-equivariant map. Let $H(\mathbf{Z}_p)[p]$ be the open compact subgroup of $H(\mathbf{Z}_p)$ given by the pullback, under the natural projection $H(\mathbf{Z}_p)\rightarrow H(\mathbf{F}_p)$, of the diagonal $\mathbf{F}_p^\times\subseteq H(\mathbf{F}_p) $. Let $\mathfrak{G}(\mathbf{Z}_p)[p]$ be the ramified open compact level subgroup of $\mathfrak{G}(\mathbf{Z}_p)$ given by $\GL_2(\mathbf{Z}_p)\times H(\mathbf{Z}_p)[p].$ Finally, let $\mathcal{R}^{\mathfrak{G}_p}_{\mathbf{Z}[1/p]}$ be the $\mathbf{Z}[1/p]$-integral spherical Hecke algebra of $(\mathfrak{G}/\GL_1)(\mathbf{Q}_p)$. Our first main result is the following.
 \begin{theorem}[\Cref{thm L1 7}]\label{thmintro1} Let $\xi_0$ be the characteristic function of $\mathfrak{H}(\mathbf{Q}_p)\mathfrak{G}(\mathbf{Z}_p)$. The following are true:
 \begin{enumerate}
 \item\emph{(Integral abstract multiplicity one)} For $\xi\in C_c^\infty(\mathfrak{H}(\mathbf{Q}_p)\backslash \mathfrak{G}(\mathbf{Q}_p)/\mathfrak{G}(\mathbf{Z}_p),\mathbf{Z}[1/p])$ any integral vector of spherical level $\mathfrak{G}(\mathbf{Z}_p)$, we have 
 $$\mathfrak{z}(\xi)=\mathcal{P}_\xi^\mathrm{unv}\cdot \mathfrak{z}(\xi_0)\ \ \ \mathrm{with}\ \ \mathcal{P}_\xi^\mathrm{unv}\in \mathcal{R}_{\mathbf{Z}[1/p]}^{\mathfrak{G}_p}.$$
 \item\emph{(Optimality of integral abstract norm-relations)}
      For $\xi_1\in C_c^\infty(\mathfrak{H}(\mathbf{Q}_p)\backslash \mathfrak{G}(\mathbf{Q}_p)/\mathfrak{G}(\mathbf{Z}_p)[p],\mathbf{Z}[1/p])$ any integral vector of ramified level $\mathfrak{G}(\mathbf{Z}_p)[p]$, we have
     $$\mathrm{norm}^{\mathfrak{G}(\mathbf{Z}_p)[p]}_{\mathfrak{G}(\mathbf{Z}_p)}\mathfrak{z}(\xi_1)=\mathcal{P}_{\mathrm{Tr}(\xi_1)}^\mathrm{unv}\cdot\mathfrak{z}(\xi_0)$$
     with 
     $$\mathcal{P}_{\mathrm{Tr}(\xi_1)}^\mathrm{unv}\in\mathfrak{h}^{H_p}:=\begin{dcases*}
\left\langle p+1,\mathcal{T}_p\right\rangle\ ,\ &\text{ if }H \text{is non-split}\\
\langle p-1,{\mathcal{P}_p^{H}}'(p^{-1/2}) \rangle\ ,\ &\text{ if } H\text{ is split}
\end{dcases*}$$
where  $\mathfrak{h}^{H_p}$ is regarded as an ideal in $\mathcal{R}^{\mathfrak{G}_p}_{\mathbf{Z}[1/p]}$, $\mathcal{T}_p$ is the operator corresponding to $\left[\begin{smallmatrix}
    p & \\
    & 1
\end{smallmatrix}\right]$ and ${\mathcal{P}_p^{H}}'(X)$ is an explicit polynomial in $\mathcal{R}^{\mathfrak{G}_p}[X]$ that interpolates local $L$-factors of the unramified principal-series. Moreover for certain choices of $\xi$ both generators of $\mathfrak{h}^{H_p}$ are obtained in either case.
\end{enumerate}
 \end{theorem}
 We note that for trivial central character, the ideal $\mathfrak{h}^{H_p}$ coincides with $\langle[\mathfrak{G}(\mathbf{Z}_p):\mathfrak{G}(\mathbf{Z}_p)[p]],{\mathcal{P}_p^{H}}'(p^{-1/2})\rangle$ for both non-split and split $H$, in the appropriate quotient of the Hecke algebra. For more on this, we refer to \Cref{rem non-split ideal h}. Also see \Cref{lem euler factor} for the definition of ${\mathcal{P}_p^{H}}'$ in both split and non-split cases. The Hecke operators appearing in \Cref{thmintro1} are canonical, and universal with respect to the family of all unramified characters of $H(\mathbf{Q}_p)$ (\Cref{sec modules for a fixed character} and \Cref{sec univ prop}). For the definition of $\mathcal{P}_\xi^\mathrm{unv}$ and $\mathcal{P}_{\mathrm{Tr}(\xi_1)}^\mathrm{unv}$ see \Cref{def unv hecke ops}. For details on how the first part of the theorem produces an integral variant of Waldspurger's multiplicity one, we refer the reader to \Cref{sec int structures}. 

 To show that these Hecke operators do exist and are in fact canonical, we prove certain freeness results for the unramified modules in question, both at an integral and non-integral level (\Cref{sec 3}). These structure theorems allow us to construct a universal Shintani function for $(\mathfrak{G},\mathfrak{H})$ (\Cref{sec unv shintani}) whose integral theory is of separate interest. This universal Shintani function interpolates classical local Shintanini functions in the style of \cite{murase1996shintani}. Combining this with explicit Jacquet-Langlands \cite{jacquet2006automorphic} zeta integral constructions for the normalized toric periods (\Cref{seczeta}), we are able to realize these Hecke operators and study their integral properties in detail. The treatment for split and non-split tori is somewhat different as it will become apparent further on. We hope that \Cref{thmintro1} can have future applications to the integral theory of Euler system tame norm-relations for $(\GL_2,H)$, as well as motivate the study of this integral phenomenon in a more general setting.

Our second main result is concerned with the integral properties of unramified toric periods associated to modular forms. 

Fix once and for all a prime $\ell$ and an isomorphism $\iota:\mathbf{C}\simeq\overline{\mathbf{Q}}_\ell$. Let $E=\mathbf{Q}(\sqrt{-M})$ be a quadratic field and $\mathscr{H}$ be the globally non-split torus over $\mathbf{Q}$ given by $\mathrm{Res}_{E/\mathbf{Q}}\GL_1$ regarded as a subgroup $\GL_2$ (\Cref{sec global}). Finally, we let $\mathscr{G}:=\GL_2\times\mathscr{H}$ where we regard $\mathscr{H}$ as a subgroup of $\mathscr{G}$ under the usual embedding $h\mapsto (h,h^{-1}).$ 
For a normalized cuspidal eigenform $f\in S_k(\Gamma_1(N),\varepsilon)$ of even integral weight, we write $\pi_f$ for its associated automorphic representation as in \cite{gelbart2006automorphic}. let $S$ be a finite set of places containing $\{\infty,p|2\ell NM\}$ and let $\chi_f^S$ be a character of $\mathscr{H}(\mathbf{A}^S)$ taking values in a number field, and whose central character agrees with $\varepsilon^S$ (the $S$-finite part of the adelization of $\varepsilon$). By a famous theorem of Waldspurger \cite{Waldspurger}, there is a unique normalized period 
$$\mathcal{Z}_f\in\mathrm{Hom}_{\mathscr{H}(\mathbf{A}^S)}(\pi_f^S\boxtimes\chi_f^S,\mathbf{1}).$$

We are interested in its integral behavior when evaluated on suitable lattices.
Let $L_f$ be the composite of the number field of $f$ and $\chi_f^S$. Let $\mathbf{L}_f/\mathbf{Q}_\ell$ be the smallest $\ell$-adic number field containing the image of $L_f$ under $\iota.$ For each $p\notin S$, $\mathscr{H}$ is an unramified maximal torus of $\GL_2$ over $\mathbf{Q}_p$, and we once again consider the ramified level subgroup $\mathscr{H}(\mathbf{Z}_p)[p]$ (\Cref{def level subgroup}). For a finite set of primes $S'$ disjoint from $S$, we write $\mathscr{G}(\hat{\mathbf{\mathbf{Z}}}^S)[S']:=\{\prod_{p\in S'}\mathscr{G}(\mathbf{Z}_p)[p]\}\times\{\prod_{p\not\in S'}\mathscr{G}(\mathbf{Z}_p)\}$ where as in \Cref{thmintro1}, each $\mathscr{G}(\mathbf{Z}_p)[p]:=\GL_2(\mathbf{Z}_p)\times \mathscr{H}(\mathbf{Z}_p)[p]$. Finally, we write $\mathrm{vol}_{\mathscr{H}(\mathbf{A}^S)}:=\prod_{p\notin S}\mathrm{vol}_{\mathscr{H}(\mathbf{Q}_p)}$ where each $\mathrm{vol}_{\mathscr{H}(\mathbf{Q}_p)}$ is the normalized Haar measure on $\mathscr{H}(\mathbf{Q}_p)$ giving $\mathscr{H}(\mathbf{Z}_p)$ volume $1$.
\begin{theorem}[Integrality of toric periods for modular forms; \Cref{thm integr of toric periods}]\label{thmintro2}
Let $f\in S_k(\Gamma_1(N),\varepsilon)$ be a normalized cuspidal eigenform of even integral weight. Then the following are true:
\begin{enumerate}
    \item For any $g\in\mathscr{G}(\mathbf{A}^S)$ and any $C_g\in\mathrm{vol}_{\mathscr{H}(\mathbf{A}^S)}(\mathscr{H}(\mathbf{A}^S)\cap g \mathscr{G}(\hat{\mathbf{Z}}^S)g^{-1})\cdot\mathcal{O}_{\mathbf{L}_f}$, the linear form $\mathcal{Z}_f$ satisfies 
    $$C_g\mathcal{Z}_f(g\cdot W_{\pi_f^S\boxtimes\chi_f^S}^\mathrm{sph})\in\mathcal{O}_{\mathbf{L}_f}.$$
    \item If moreover $C_g\in \mathrm{vol}_{\mathscr{H}(\mathbf{A}^S)}(\mathscr{H}(\mathbf{A}^S)\cap g \mathscr{G}(\hat{\mathbf{Z}}^S)[S']g^{-1})\cdot\mathcal{O}_{\mathbf{L}_f}$ for a finite set of primes $S'$ disjoint from $S$, then the linear form $\mathcal{Z}_f$ satisfies
    $$\scalemath{0.95}{C_g\mathcal{Z}_f(g\cdot W_{\pi_f^S\boxtimes\chi_f^S}^\mathrm{sph})\in\left\{\prod_{\substack{p\in S'\\ \mathrm{inert\ in\ }E}}\left\langle p+1, a_p(f)\right\rangle\right\}\times\left\{ \prod_{\substack{p\in S'\\ \mathrm{split\ in\ }E}}\left\langle p-1, L_p^{-1}\left(f;\chi_{f,p}(\left[\begin{smallmatrix}
        1/p & \\
        & 1
    \end{smallmatrix}\right])\right) \right\rangle\right\}\subseteq \mathcal{O}_{\mathbf{L}_f}}$$
    where $a_p(f)$ is the $p$-th Fourier coefficient of $f$, and $\scalemath{0.93}{L_p^{-1}(f;X):=1-a_p(f)p^{-\tfrac{k}{2}}X+\varepsilon(p)p^{-1}X^2\in\mathcal{O}_{\mathbf{L}_f}[X].}$
\end{enumerate}
\end{theorem}
 We remark that the ideals appearing in the second part of the theorem at each prime $p\in S'$, are precisely the ideals given by the image of the $\mathfrak{h}^{\mathscr{H}(\mathbf{Q}_p)}$ of \Cref{thmintro1}, under the spherical Hecke eigensystem $\mathcal{R}^{\mathscr{H}(\mathbf{Q}_p)}_{\mathbf{Z}[1/p]}\rightarrow(\pi_{f,p}\boxtimes \chi_{f,p})^{\mathscr{G}(\mathbf{Z}_p)}.$ 
\subsection{Acknowledgements} I am extremely grateful to my PhD supervisor David Loeffler for introducing me to this problem, for his continuous guidance and support, and for all his valuable comments and suggestions regarding earlier versions of this paper. I would also like to thank Ju-Feng Wu for all his insightful comments and ongoing support. Finally, I would like to thank Robert Kurinczuk for useful discussions. 
{
  \hypersetup{linkcolor=black}
  \tableofcontents
}
\section{Preliminaries}
\subsection{Group schemes, Hecke algebras and Hecke modules}\label{GHM}
Let $p$ be an odd prime and $H$ and unramified maximal torus of $G:=\GL_{2}$ over $\mathbf{Q}_p$. Throughout the paper, we will simply refer to split and non-split tori to mean split and non-split over $\mathbf{Q}_p$.
Let $E=\mathbf{Q}_p(\sqrt{-D})$ be the unramified quadratic extension of $\mathbf{Q}_p$ where we take $D$ to be an element of $\mathbf{Z}_p^\times$. The torus $H$ is given (up to conjugation) by
$$H=\begin{cases}
    \mathrm{Res}_{\mathcal{O}_E/\mathbf{Z}_p}\GL_1\ ,\ H \text{ non-split}\\
    \GL_1\times \GL_1 \ ,\ H \text{ split}
\end{cases}$$
with the following choices of embeddings
$$\mathrm{Res}_{\mathcal{O}_E/\mathbf{Z}_p}\GL_1\hookrightarrow G\ ,\ a+b\sqrt{-D}\mapsto\left[\begin{smallmatrix}
    a & b\\
    -bD & a
\end{smallmatrix}\right]$$
$$\GL_1\times \GL_1 \hookrightarrow G\ ,\ (a,b)\mapsto \left[\begin{smallmatrix}
    a & \\
    & b
\end{smallmatrix}\right].$$
\noindent We define the groups $$G_p:=G(\mathbf{Q}_p),\ G_p^\circ:=G(\mathbf{Z}_p),\ H_p:=H(\mathbf{Q}_p),\ H_p^\circ:=H(\mathbf{Z}_p).$$

\noindent The subgroups $G_p^\circ $ and $H_p^\circ$ are maximal open compacts in $G_p$ and $H_p$ respectively. We fix $\mathbf{Q}$-valued Haar measures on $G_p$ and $H_p$ such that the volumes of $G_p^\circ$ and $H_p^\circ$ are one, respectively. We denote by $\mathcal{H}(G_p)=\mathcal{H}(G_p,\mathbf{C})$ the full Hecke algebra of $G_p$, consisting of locally constant, compactly supported $\mathbf{C}$-valued functions with coefficients. We regard this as an algebra through the usual convolution 
$$(\phi_1 \cdot_G \phi_2)(x):=\int_{G_p}\phi_1(g)\phi_2(g^{-1}x)dg.$$
Any smooth $G_p$-representation can be regarded as a $\mathcal{H}(G_p)$-module via the usual formula, $$\phi\cdot v :=\int_{G_p}\phi(g) g\cdot v dg\ ,\ \phi\in\mathcal{H}(G_p), v\in V .$$This is of course compatible with the $\cdot_G$ convolution of $\mathcal{H}(G_p)$.
For an element $\phi\in\mathcal{H}(G_p)$, we denote by $\phi^{'}$, the composition of $\phi$ with the involution of $G_p$, given by inversion. That is, $\phi^{'}(g):=\phi(g^{-1})$ for all $g\in G_p$.\\

\noindent We write $\mathcal{H}_{G_p}^\circ=\mathcal{H}(G_p^\circ\backslash G_p/G_p^\circ,\mathbf{C})$ for the spherical Hecke algebra of $G_p$, of locally constant, compactly supported, $\mathbf{C}$-valued, $G_p^\circ$-bi-invariant functions on $G_p$. It is well known, that $\mathcal{H}_{G_p}^\circ$ is a commutative algebra of Weyl-group invariant polynomials in the Satake parameters, and it is explicitly given by $\mathbf{C}[\mathcal{T}_p,\mathcal{S}_p^{\pm 1}]$, where $$\mathcal{T}_p:=\ch(G_p^\circ\left[\begin{smallmatrix}
    p & \\
     & 1
\end{smallmatrix}\right]G_p^\circ)\ \ ,\ \ \mathcal{S}_p:=\ch(\left[\begin{smallmatrix}
    p & \\
     & p
\end{smallmatrix}\right]G_p^\circ).$$
The spherical Hecke algebra of $H_p$ which we denote by $\mathcal{H}_{H_p}^\circ$ is explicitly given by $\mathbf{C}[X_p^{\pm 1}]$ where $$X:=\ch(\left[\begin{smallmatrix}
    p & \\
     & p
\end{smallmatrix}\right] H_p^\circ)$$
if $H$ is non-split. On the other hand, if $H$ is split, then $\mathcal{H}_{H_p}^\circ$ is given by $\mathbf{C}[A_p^{\pm 1}, B_p^{\pm 1}]$, where $$A_p:=\ch(\left[\begin{smallmatrix}
    p& \\
    & 1
\end{smallmatrix}\right] H_p^\circ)\ \ ,\ \ B_p:=\ch(\left[\begin{smallmatrix}
    1 & \\
     & p
\end{smallmatrix}\right] H_p^\circ).$$
If $R$ is a $\mathbf{Z}$-subalgebra of $\mathbf{C}$ we write $\mathcal{H}_{G_p}^\circ(R)$ for the $R$-algebra consisting of elements of $\mathcal{H}_{G_p}^\circ$ that take values in $R$. The algebra $\mathcal{H}_{G_p}^\circ(R)$ is still generated as an $R$-algebra by $\mathcal{S}_p^{\pm 1}$ and $\mathcal{T}_p$. In other words, $\mathcal{H}_{G_p}^\circ(R)=R[\mathcal{S}_p^{\pm 1},\mathcal{T}_p].$ Furthermore, for an $R$-module $M$ contained in $\mathbf{C}$, we write $\mathcal{H}_{G_p}^\circ(M)$ for the $\mathcal{H}_{G_p}^\circ(R)$-module given by elements of $\mathcal{H}_{G_p}^\circ$ that take values in $M$.\\
\\
Throughout, we will denote the center of $G_p$ by $Z_p$. We also write $B_p$ for the upper triangular Borel of $G_p$, $N_p$ for its unipotent radical, and $A_p$ for the subgroup $\left[\begin{smallmatrix}
    \mathbf{Q}_p^\times & \\
     & 1
\end{smallmatrix}\right]$. This way, we have the well-known Iwasawa decomposition $G_p=B_pG_p^\circ=Z_pA_pN_pG_p^\circ$. For ease of notation, when it is clear what we mean, we will simply denote an element $\left[\begin{smallmatrix}
    z & \\
    & z
\end{smallmatrix}\right]$ of the center of $G_p$ by $z$. Furthermore, we denote by $H_Z$, the quotient of $H$ by the center of $G$. Thus $$\#H_Z(\mathbf{F}_p)=\begin{cases}
    p+1\ ,\ \ H \textit{ non-split}\\
    p-1\ ,\ \ H \textit{ split}
\end{cases}.$$

\noindent We define the group scheme $\mathfrak{G}:= G\times H$, and the subgroup scheme $\mathfrak{H}$ to be the image of $H$ in $\mathfrak{G}$ under the embedding $h\mapsto (h,h^{-1})$. The groups of points $\mathfrak{G}_p,\mathfrak{G}_p^\circ,\mathfrak{H}_p$ and $\mathfrak{H}_p^\circ$ are then defined in the same way as before. The spherical Hecke algebra of $\mathfrak{G}_p$, which we denote by $\mathcal{H}_{\mathfrak{G}_p}^\circ=\mathcal{H}(\mathfrak{G}_p^\circ\backslash \mathfrak{G}_p/\mathfrak{G}_p^\circ,\mathbf{C})$, is canonically identified with $\mathcal{H}_{G_p}^\circ \otimes \mathcal{H}_{H_p}^\circ$ and is given by $\mathbf{C}[\mathcal{T}_p,\mathcal{S}_p^{\pm1 },X_p^{\pm 1}]$ if $H$ is non-split, and by $\mathbf{C}[\mathcal{T}_p,\mathcal{S}_p^{\pm 1}, A_p^{\pm 1}, B_p^{\pm 1}]$ if $H$ is split. The involution $(-)^{'}$ defined on $\mathcal{H}_{G_p}^\circ$ extends to an involution of $\mathcal{H}_{\mathfrak{G}_p}^\circ$ in the obvious way, by $(\phi\otimes f)^{'}=\phi^{'}\otimes f^{'}$. 
\subsubsection{Hecke modules}\label{UHM}
We define the following three spaces of functions on $\mathfrak{G}_p$ that play a central role in this work.
\begin{enumerate}
\item The space $C_{\mathfrak{H}_p\backslash c}^{\infty,\mathrm{sph}}:=C_{c}^\infty(\mathfrak{H}_p\backslash \mathfrak{G}_p/\mathfrak{G}_p^\circ,\mathbf{C})$, will denote the space of $\mathbf{C}$-valued functions on $\mathfrak{G}_p$ that are compactly supported modulo $\mathfrak{H}_p$ on the left, right $\mathfrak{G}_p^\circ$-invariant and left $\mathfrak{H}_p$-invariant.
\item The lattice $\mathcal{L}^{H_p}:=C_{c}^\infty(\mathfrak{H}_p\backslash \mathfrak{G}_p/\mathfrak{G}_p^\circ,\mathbf{Z}[1/p])$ will denote functions in $C_{\mathfrak{H}_p\backslash c}^{\infty,\mathrm{sph}}$ that are valued in $\mathbf{Z}[1/p]$.
\item The lattice $\mathcal{L}_1^{H_p}\subseteq\mathcal{L}^{H_p}$ will denote functions in $C_{\mathfrak{H}_p\backslash c}^{\infty,\mathrm{sph}}$ that are valued in $\#H_Z(\mathbf{F}_p)\mathbf{Z}[1/p]$ on
$H_p\times\{1\}$, and in $\mathbf{Z}[1/p]$ everywhere else.
\end{enumerate}
\noindent We write 
$$\mathcal{R}^{\mathfrak{G}_p}:=\begin{cases}
    \mathcal{H}_{\mathfrak{G}_p}^\circ/(\mathcal{S}_p-X_p)\ ,\ H\text{ non-split}\\
    \mathcal{H}_{\mathfrak{G}_p}^\circ/(\mathcal{S}_p-A_pB_p)\ ,\ H \text{ split}
\end{cases}\ \ \ \ \mathcal{R}^{\mathfrak{G}_p}_{\mathbf{Z}[1/p]}:=\begin{cases}
    \mathcal{H}_{\mathfrak{G}_p}^\circ(\mathbf{Z}[1/p])/(\mathcal{S}_p-X_p)\ ,\ H\text{ non-split}\\
    \mathcal{H}_{\mathfrak{G}_p}^\circ(\mathbf{Z}[1/p])/(\mathcal{S}_p-A_pB_p)\ ,\ H \text{ split}
\end{cases}.$$
We will often simply identify $\mathcal{R}^{\mathfrak{G}_p}$ with $\mathbf{C}[\mathcal{T}_p,\mathcal{S}_p^{\pm 1}]$ and $\mathcal{R}_{\mathbf{Z}[1/p]}^{\mathfrak{G}_p}$ with $\mathbf{Z}[1/p][\mathcal{T}_p,\mathcal{S}_p^{\pm 1}]$ if $H$ is non split. Similarly, we will often identify $\mathcal{R}^{\mathfrak{G}_p}$ with $\mathbf{C}[\mathcal{T}_p,\mathcal{S}_p^{\pm 1}, A_p^{\pm 1}]$ and $\mathcal{R}_{\mathbf{Z}[1/p]}^{\mathfrak{G}_p}$ with $\mathbf{Z}[1/p][\mathcal{T}_p,\mathcal{S}_p^{\pm 1}, A_p^{\pm 1}]$ if $H$ is split. It is worth noting that $\mathcal{R}^{\mathfrak{G}_p}$ can also be regarded as the spherical Hecke algebra of the group of $\mathbf{Q}_p$-points of the group scheme $\mathfrak{G}/\{(z,z^{-1})\ |\ z\in Z_G\}$. Most of the representation theory carried out in this paper factors through this quotient, since we will mostly be interested in $\mathfrak{G}$-representations that have non-trivial $\mathfrak{H}$-invariant periods.\\
\\
\noindent The space $C_{\mathfrak{H}_p\backslash c}^{\infty,\mathrm{sph}}$ is naturally a module over the Hecke algebra $\mathcal{H}_{\mathfrak{G}_p}^\circ$, with action given through the convolution
$$\left((\phi\otimes f)*\xi\right)(x,y):=\int_{G_p}\int_{H_p} \phi(g)f(h)\xi(xg,yh)\ dh\ dg\ ,\ \ \phi\otimes f\in\mathcal{H}_{\mathfrak{G}_p}^\circ, \xi\in C_{\mathfrak{H}_p\backslash c}^{\infty,\mathrm{sph}}.$$
It is clear that for $H$ non-split (resp. split), the module $C_{\mathfrak{H}_p\backslash c}^{\infty,\mathrm{sph}}$ is annihilated by $\mathcal{S}_p-X_p$ (resp. $\mathcal{S}_p-A_pB_p$). Thus $C_{\mathfrak{H}_p\backslash c}^{\infty,\mathrm{sph}}$ is naturally a module over $\mathcal{R}^{\mathfrak{G}_p}$. Similarly, the lattice $\mathcal{L}^{H_p}$ is naturally a module over $\mathcal{R}_{\mathbf{Z}[1/p]}^{\mathfrak{G}_p}$.\\
We denote by $\mathbf{C}[\mathfrak{G}_p/\mathfrak{G}_p^\circ]$, the space of functions in the full Hecke algebra of $\mathfrak{G}_p$ that are right $\mathfrak{G}_p^\circ$-invariant. This admits an action of $\mathfrak{H}_p$ through $(h\cdot \xi)(x,y):=\xi(h^{-1}\cdot (x,y))=\xi(h^{-1}x,hy)$. We write $\mathbf{C}[\mathfrak{G}_p/\mathfrak{G}_p^\circ]_{\mathfrak{H}_p}$ for its $\mathfrak{H}_p$-coinvariants and denote the class of an element by $[-]$. This is naturally a module over $\mathcal{R}^{\mathfrak{G}_p}$ through the $*$-action defined above. It is clear from the definitions that for $\xi\in\mathbf{C}[\mathfrak{G}_p/\mathfrak{G}_p^\circ]$ and $\phi\otimes f \in\mathcal{H}_{\mathfrak{G}_p}^\circ$, we have $(\phi\otimes f)* \xi=\xi \cdot (\phi\otimes f)^{'}$ where the $\cdot$ action denotes the standard convolution in $\mathcal{H}(\mathfrak{G}_p)=\mathcal{H}(G_p)\otimes \mathcal{H}(H_p)$. This fact will be used throughout the paper without any further reference to it. 

\begin{defn}\label{maps}
   Let $V$ be a smooth $\mathbf{C}$-linear $\mathfrak{G}_p$-representation. A $\mathbf{C}$-linear map $$\zeta:\mathcal{H}(\mathfrak{G}_p)\longrightarrow V$$
   is $(\mathfrak{H}_p\times \mathfrak{G}_p)$-equivariant if it is equivariant with respect to the following actions:
   \begin{enumerate}
       \item $\mathfrak{G}_p$ acts on the left via $g\cdot \xi:=\xi((-)g)$ and on the right by its assigned action on $V$.
       \item $\mathfrak{H}_p$ acts on the left via $h\cdot \xi:=\xi(h^{-1}(-))$ and trivially on the right.
   \end{enumerate}
\end{defn}
\noindent Clearly such a map factors through the $\mathfrak{H}_p$-coinvariants of $\mathcal{H}(\mathfrak{G}_p)$. The class of $(\mathfrak{H}_p\times \mathfrak{G}_p)$-equivariant maps can be identified with $\Hom_{\mathfrak{H}_p}(V^\vee, \mathbf{1})$ (c.f \cite{loeffler2021euler} Section $3.9$) where $V^\vee$ denotes the smooth dual of $V$ regarded as a $\mathfrak{G}_p$-representation via $g\cdot f:=f(g^{-1}(-))$. Later in the paper, we will actually need to construct an explicit element of $\Hom_{\mathfrak{H}_p}(V^\vee,\mathbf{1})$ in the case $V=\pi_p\boxtimes \chi_p$ where $\pi_p$ is an unramified principal series representation of $G_p$ of Whittaker type, and $\chi_p$ is an unramified character of $H_p$ with $\omega_{\pi_p}=\chi_p|_{\mathbf{Q}_p^\times}$. Being able to explicitly construct and study elements of this space for a big enough family of representations will be important in later sections. 

Given a smooth $\mathfrak{G}_p$-representation $V$, and open compact subgroups $U_1\subseteq U_2\subseteq\mathfrak{G}_p^\circ$, we write $$\mathrm{norm}^{U_1}_{U_2}: V^{U_2}\rightarrow V^{U_1},\ v\mapsto\sum_{k\in U_1/U_2}k\cdot v.$$
 \begin{prop}\label{equiv}
        For any $[\xi]\in\mathbf{C}[\mathfrak{G}_p/\mathfrak{G}_p^\circ]_{\mathfrak{H}_p}$, and $\zeta$ a $(\mathfrak{H}_p\times \mathfrak{G}_p)$-equivariant map into a smooth $\mathfrak{G}_p$-representation $V$, we have 
        $$\zeta(\mathcal{P} * [\xi])=\ \mathcal{P}\cdot \zeta([\xi])$$
        for all $\mathcal{P}\in\mathcal{H}_{\mathfrak{G}_p}^\circ$.
        \begin{proof}
            This follows from a standard unraveling of definitions.
        \end{proof}
    \end{prop}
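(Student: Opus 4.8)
The plan is to unravel everything into integrals over $\mathfrak{G}_p$ and to match the two sides using only the $\mathfrak{G}_p$-equivariance of $\zeta$ and the inversion involution $(-)'$. By $\mathbf{k}$-linearity in $\mathcal{P}$ it suffices to treat $\mathcal{P}=\phi\otimes f$ with $\phi\in\mathcal{H}_{G_p}^\circ$ and $f\in\mathcal{H}_{H_p}^\circ$, since these span $\mathcal{H}_{\mathfrak{G}_p}^\circ$. Choose a lift of $[\xi]$ to a genuine function $\xi\in\mathbf{k}[\mathfrak{G}_p/\mathfrak{G}_p^\circ]\subseteq\mathcal{H}(\mathfrak{G}_p)$; since the $*$-action descends to $\mathfrak{H}_p$-coinvariants and the induced map on coinvariants is by construction computed on any such lift, it is enough to prove $\zeta(\mathcal{P}*\xi)=\mathcal{P}\cdot\zeta(\xi)$ as an identity in $V$ for the lift.

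Now use the identity $(\phi\otimes f)*\xi=\xi\cdot(\phi\otimes f)'$ recorded above, where $\cdot$ denotes ordinary convolution in $\mathcal{H}(\mathfrak{G}_p)$. Since $\mathfrak{G}_p$ is unimodular, a change of variable rewrites this convolution as a superposition of right translates, $\xi\cdot\mathcal{P}'=\int_{\mathfrak{G}_p}\mathcal{P}'(h)\,(h^{-1}\cdot\xi)\,dh$, where $h\cdot\xi:=\xi((-)h)$ is the right $\mathfrak{G}_p$-action of \Cref{maps}. Because $\mathcal{P}'$ is locally constant and compactly supported and $\xi$ is right $\mathfrak{G}_p^\circ$-invariant, this integral is a finite $\mathbf{k}$-linear combination; in particular applying the linear map $\zeta$ commutes with it, with no analytic subtlety. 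Together with the $\mathfrak{G}_p$-equivariance $\zeta(h^{-1}\cdot\xi)=h^{-1}\cdot\zeta(\xi)$ this gives $\zeta(\xi\cdot\mathcal{P}')=\int_{\mathfrak{G}_p}\mathcal{P}'(h)\,h^{-1}\cdot\zeta(\xi)\,dh$. Substituting $h\mapsto h^{-1}$ turns $\mathcal{P}'$ back into $\mathcal{P}$ and produces $\int_{\mathfrak{G}_p}\mathcal{P}(h)\,h\cdot\zeta(\xi)\,dh$, which is precisely $\mathcal{P}\cdot\zeta(\xi)$ by definition of the Hecke module structure on the smooth representation $V$. Passing back to the class $[\xi]$ is harmless, as all the actions involved were already known to descend to $\mathfrak{H}_p$-coinvariants.

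There is no genuine obstacle here; the content is entirely bookkeeping. The one place to stay careful is keeping the three actions straight — the right-translation $\mathfrak{G}_p$-action on $\mathcal{H}(\mathfrak{G}_p)$ from \Cref{maps}, the $*$-action of $\mathcal{H}_{\mathfrak{G}_p}^\circ$ on $C_{\mathfrak{H}_p\backslash c}^{\infty,\mathrm{unv}}$ and on $\mathbf{k}[\mathfrak{G}_p/\mathfrak{G}_p^\circ]_{\mathfrak{H}_p}$, and the given module structure on $V$ — and remembering that the involution $(-)'$ is exactly what intertwines $*$ with ordinary convolution, so that it is $\mathcal{P}$ rather than $\mathcal{P}'$ that reappears on the right after the final change of variable.
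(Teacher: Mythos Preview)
Your argument is correct and is precisely the ``standard unraveling of definitions'' the paper gestures at: reduce to pure tensors, use the identity $(\phi\otimes f)*\xi=\xi\cdot(\phi\otimes f)'$, rewrite convolution as a finite sum of right translates, push $\zeta$ through by $\mathfrak{G}_p$-equivariance, and undo the inversion. There is nothing to add.
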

\subsubsection{From invariants to coinvariants}\label{sec inv to coinv}
    For any $(V,\zeta)$ as in \Cref{maps}, there is a natural linear map
    $\mathfrak{z}:C_c^\infty(\mathfrak{H}_p\backslash \mathfrak{G}_p)\longrightarrow V$ given by the composition of the maps
    $$C_c^\infty(\mathfrak{H}_p\backslash \mathfrak{G}_p)\simeq_i \mathcal{H}(\mathfrak{G}_p)_{\mathfrak{H}_p}\overset{\zeta}{\longrightarrow}V.$$
    The isomorphism $i$ is given by sending $\ch(\mathfrak{H}_p gU)$ to $\vol_{\mathfrak{H}_p}(\mathfrak{H}_p\cap gUg^{-1})^{-1}\cdot[\ch(gU)]$ for any open compact subgroup $U\subseteq \mathfrak{G}_p^\circ$. It has an inverse given by integration over $\mathfrak{H}_p$; i.e. $[\xi]\mapsto(g\mapsto\int_{\mathfrak{H}_p}\xi(hg)\ dh).$ It is clear that we have an induced map at each finite level $U\subseteq\mathfrak{G}(\mathbf{Q}_p)$ (open compact subgroup)
    $$C_c^\infty(\mathfrak{H}_p\backslash \mathfrak{G}_p/U)\simeq_i \mathbf{C}[\mathfrak{G}_p/U]_{\mathfrak{H}_p}\overset{\zeta}{\longrightarrow}V^U.$$
    \subsubsection{Double coset decompositions}\label{secDCD}
\begin{lem}\label{Dns}
    If $H$ is non-split then we have the following double coset decomposition
    $$\mathfrak{G}_p=\bigcup_{\substack{\lambda\in\mathbf{Z}_{\geq 0}\\ a\in\mathbf{Z}}}\mathfrak{H}_p\left(s(\lambda),p^a\right)\mathfrak{G}_p^\circ$$
    where $s(\lambda):=\left[\begin{smallmatrix}
        p^\lambda & \\
         & 1
    \end{smallmatrix}\right]$. Furthermore, the decomposition is disjoint.
    \begin{proof}
        \Cref{lemDns} in appendix.
    \end{proof}
\end{lem}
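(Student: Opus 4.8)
The plan is to work with the quotient $\mathfrak{H}_p\backslash\mathfrak{G}_p/\mathfrak{G}_p^\circ$ directly. Writing a general element of $\mathfrak{G}_p$ as $(g,h)$ with $g\in G_p$ and $h\in H_p$, the left action of $\mathfrak{H}_p$ sends this to $(tg, t^{-1}h)$ for $t\in H_p$, while the right action of $\mathfrak{G}_p^\circ$ sends it to $(gk, h\varepsilon)$ with $k\in G_p^\circ$, $\varepsilon\in H_p^\circ$. First I would use the $H_p$-component to normalize: since $H_p/H_p^\circ\cong\mathbf{Z}$ (generated by $\left[\begin{smallmatrix}p&\\&p\end{smallmatrix}\right]$, using that $E/\mathbf{Q}_p$ is unramified so $\mathcal{O}_E^\times$ and the uniformizer $p$ generate $E^\times$ modulo $\mathcal{O}_E^\times$), I can use the $\mathfrak{H}_p$-action together with right multiplication by $H_p^\circ$ to move the second coordinate to $p^a$ for a unique $a\in\mathbf{Z}$; this simultaneously forces the residual freedom in the first coordinate to be left multiplication by $t\in H_p^\circ = H(\mathbf{Z}_p)$ and right multiplication by $k\in G_p^\circ$. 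So the problem reduces to describing $H(\mathbf{Z}_p)\backslash G(\mathbf{Q}_p)/G(\mathbf{Z}_p)$.

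Next I would analyze $H(\mathbf{Z}_p)\backslash G_p/G_p^\circ$ using the Cartan decomposition $G_p=\bigsqcup_{\mu_1\geq\mu_2} G_p^\circ\left[\begin{smallmatrix}p^{\mu_1}&\\&p^{\mu_2}\end{smallmatrix}\right]G_p^\circ$, together with the fact that the center $Z_p$ can be absorbed (it contributes the $p^a$ already, or equivalently one works modulo $\{(z,z^{-1})\}$ and with $s(\lambda)=\left[\begin{smallmatrix}p^\lambda&\\&1\end{smallmatrix}\right]$, $\lambda=\mu_1-\mu_2\geq 0$). The key geometric input is that $H(\mathbf{Z}_p)$, being a non-split (elliptic) maximal torus with connected special fiber a non-split torus over $\mathbf{F}_p$, acts on the building/the set of lattices with a single fixed vertex, and acts transitively on the vertices at each fixed distance $\lambda$ from it — equivalently, $H(\mathbf{Z}_p)$ acts transitively on $G_p^\circ\backslash G_p^\circ s(\lambda) G_p^\circ / $ (stabilizer), so that $H(\mathbf{Z}_p) s(\lambda) G_p^\circ$ is a single double coset for each $\lambda\geq 0$. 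Concretely one checks $H(\mathbf{Z}_p) s(\lambda) G_p^\circ \ni g$ iff the standard lattice $\mathbf{Z}_p^2$ and $g\mathbf{Z}_p^2$ span a sublattice of elementary divisor type $(\lambda,0)$; transitivity follows because $H(\mathbf{F}_p)=\mathbf{F}_{p^2}^\times$ acts transitively on the $p^{\lambda-1}(p+1)$ relevant lines/sublattices — this is the reduction-mod-$p$ computation that makes the non-split case work and simultaneously explains the $p+1=\#H_Z(\mathbf{F}_p)$ appearing elsewhere.

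For disjointness: the value of $\lambda$ is recovered as the elementary divisor invariant of $g$ (invariant under $G_p^\circ$ on the right and under $H(\mathbf{Z}_p)\subseteq G_p^\circ$ on the left), and the value of $a$ is recovered from $\det$ and from the $H_p/H_p^\circ$-class of the second coordinate: if $\mathfrak{H}_p(s(\lambda),p^a)\mathfrak{G}_p^\circ = \mathfrak{H}_p(s(\lambda'),p^{a'})\mathfrak{G}_p^\circ$ then comparing second coordinates modulo $H_p^\circ$ gives $a\equiv a'$, hence $a=a'$, and then comparing first coordinates gives $s(\lambda)\in H(\mathbf{Z}_p)s(\lambda')G_p^\circ$, forcing $\lambda=\lambda'$ by elementary divisors. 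The main obstacle is the transitivity claim in the previous paragraph — i.e. that $H(\mathbf{Z}_p)$ really does act transitively on the fiber over each $\lambda$ — which I expect to handle by an explicit computation with the lattice chain, reducing mod $p$ to the action of the non-split torus $\mathbf{F}_{p^2}^\times$ on $\mathbf{P}^1(\mathbf{F}_p)$ and its iterated analogues; the rest is bookkeeping with Cartan/Iwasawa decompositions. I would relegate the detailed verification to the appendix (Lemma \ref{lemDns}), as the statement indicates.
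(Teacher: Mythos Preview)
Your overall strategy---reduce to $H_p^\circ\backslash G_p/G_p^\circ$ and then use the Cartan decomposition together with transitivity of $\mathcal{O}_E^\times$ on each Cartan cell---is sound and genuinely different from the paper's. The paper (in the appendix) instead first proves $G_p=\bigsqcup_{\lambda\geq 0} H_p\, s(\lambda)\, G_p^\circ$ via the Iwasawa decomposition and explicit case-by-case matrix identities, and only afterwards passes to $\mathfrak{G}_p$, using that $H_p^\circ$ is exactly the preimage of $\mathbf{Z}_p^\times$ under $\det$. Your route is more conceptual and explains the index $(p+1)p^{\lambda-1}$ structurally; the paper's explicit matrix identities, on the other hand, are reused verbatim in the proof of \Cref{thmCunivfree}, so that approach pays off downstream.

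There is, however, a recurring error in how you handle the second coordinate. You write that it can be moved to $p^a$ ``for a unique $a$'', and later that ``comparing second coordinates modulo $H_p^\circ$ gives $a\equiv a'$''. Both are false: left $\mathfrak{H}_p$-translation moves the second coordinate through \emph{all} of $H_p$, so its class in $H_p/H_p^\circ$ carries no invariant information. Concretely, if $(s(\lambda),p^a)=(t\, s(\lambda')k,\ t^{-1}p^{a'}\varepsilon)$ then the second coordinate only yields $t\in p^{a'-a}H_p^\circ$. The invariants $(\lambda,a)$ are recovered only from the \emph{first} coordinate after substituting this $t$: one gets $s(\lambda)\in p^{a'-a}H_p^\circ\, s(\lambda')\,G_p^\circ$, and then the Cartan type forces $a'-a=0$ and $\lambda=\lambda'$ simultaneously. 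The clean way to package your reduction is the bijection $\mathfrak{H}_p\backslash\mathfrak{G}_p/\mathfrak{G}_p^\circ\xrightarrow{\ \sim\ } H_p^\circ\backslash G_p/G_p^\circ$, $(g,h)\mapsto hg$, under which $(s(\lambda),p^a)\mapsto p^a s(\lambda)$; everything then follows from Cartan once you know $H_p^\circ\, s(\lambda)\,G_p^\circ=G_p^\circ\, s(\lambda)\,G_p^\circ$.

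A smaller point: for $\lambda>1$ the transitivity you need is that $(\mathcal{O}_E/p^\lambda)^\times$ acts transitively on the cyclic $\mathbf{Z}/p^\lambda$-submodules of $\mathcal{O}_E/p^\lambda$ of full order, not merely that $\mathbf{F}_{p^2}^\times$ acts transitively on $\mathbf{P}^1(\mathbf{F}_p)$. This is still easy---every such submodule is generated by a unit of $\mathcal{O}_E/p^\lambda$---but ``iterated analogues'' does not quite say it.
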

\begin{lem}\label{Ds}
    If $H$ is split then we have the following double coset decomposition
$$\mathfrak{G}_p=\bigcup_{\substack{\lambda\in\mathbf{Z}_{\geq 0}\\ a,b\in\mathbf{Z}}}\mathfrak{H}_p\left(n_0s(\lambda),\left[\begin{smallmatrix}
        p^a & \\
        & 1
    \end{smallmatrix}\right] p^b\right)\mathfrak{G}_p^\circ$$
    where $n_0:=\left[\begin{smallmatrix}
        1 & 1\\
        & 1
    \end{smallmatrix}\right]$ and $s(\lambda)$ is as in \Cref{Dns}. Furthermore, the decomposition is disjoint.
    \begin{proof}
        \Cref{lemDs} in appendix.
    \end{proof}
\end{lem}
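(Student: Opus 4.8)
The plan is to strip off the torus factor, reducing to a double-coset decomposition for $\GL_2$ alone, and then prove that decomposition by elementary matrix manipulations — the Iwasawa decomposition for exhaustiveness and a short list of lattice invariants for disjointness. The reduction goes as follows. Since $\mathfrak{H}_p$ consists of the pairs $(\iota(h),h^{-1})$ with $h\in H_p$, where $\iota\colon H\hookrightarrow G$ is the diagonal embedding, left multiplication by $(\iota(h),h^{-1})$ carries $(g,h)$ to $(\iota(h)g,1)$, so every class in $\mathfrak{H}_p\backslash\mathfrak{G}_p/\mathfrak{G}_p^\circ$ has a representative with trivial $H$-component. Using that $H$ is commutative, one checks that $(g,h)\mapsto\iota(h)g$ descends to a bijection
\[
\mathfrak{H}_p\backslash\mathfrak{G}_p/\mathfrak{G}_p^\circ\ \xrightarrow{\ \sim\ }\ T(\mathbf{Z}_p)\backslash G_p/G_p^\circ ,
\]
where $T\subseteq G$ is the diagonal maximal torus, so that $\iota(H_p^\circ)=T(\mathbf{Z}_p)\subseteq G_p^\circ$. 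Under this bijection the proposed representative $\bigl(n_0s(\lambda),\left[\begin{smallmatrix}p^a&\\&1\end{smallmatrix}\right]p^b\bigr)$ corresponds to
\[
M_{\lambda,a,b}:=\left[\begin{smallmatrix}p^{a+b}&\\&p^b\end{smallmatrix}\right]\left[\begin{smallmatrix}p^\lambda&1\\&1\end{smallmatrix}\right]=\left[\begin{smallmatrix}p^{a+b+\lambda}&p^{a+b}\\&p^b\end{smallmatrix}\right],
\]
and it suffices to show that $\{M_{\lambda,a,b}\mid\lambda\in\mathbf{Z}_{\ge0},\ a,b\in\mathbf{Z}\}$ represents $T(\mathbf{Z}_p)\backslash G_p/G_p^\circ$ exactly once each.

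For exhaustiveness I would use $G_p=B_pG_p^\circ$ to reduce to an upper-triangular $\left[\begin{smallmatrix}x&y\\&z\end{smallmatrix}\right]$ and set $\lambda=\max\bigl(0,v(x)-v(y)\bigr)$, with $v$ the $p$-adic valuation. If $v(y)\ge v(x)$ then $\left[\begin{smallmatrix}x&y\\&z\end{smallmatrix}\right]=\left[\begin{smallmatrix}x&\\&z\end{smallmatrix}\right]\left[\begin{smallmatrix}1&y/x\\&1\end{smallmatrix}\right]\in T(\mathbf{Q}_p)G_p^\circ$; if $v(y)<v(x)$, write $x/y=p^\lambda u$ with $u\in\mathbf{Z}_p^\times$ and factor $\left[\begin{smallmatrix}x&y\\&z\end{smallmatrix}\right]=\left[\begin{smallmatrix}y&\\&z\end{smallmatrix}\right]\left[\begin{smallmatrix}p^\lambda&1\\&1\end{smallmatrix}\right]\left[\begin{smallmatrix}u&\\&1\end{smallmatrix}\right]\in T(\mathbf{Q}_p)\,n_0s(\lambda)\,G_p^\circ$. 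Decomposing $T(\mathbf{Q}_p)=\bigsqcup_{m_1,m_2\in\mathbf{Z}}\left[\begin{smallmatrix}p^{m_1}&\\&p^{m_2}\end{smallmatrix}\right]T(\mathbf{Z}_p)$ and commuting the diagonal $p$-power past $T(\mathbf{Z}_p)$ identifies $\left[\begin{smallmatrix}p^{m_1}&\\&p^{m_2}\end{smallmatrix}\right]n_0s(\lambda)$ with $M_{\lambda,\,m_1-m_2,\,m_2}$, so $T(\mathbf{Q}_p)\,n_0s(\lambda)\,G_p^\circ=\bigcup_{a,b}T(\mathbf{Z}_p)M_{\lambda,a,b}G_p^\circ$ and every element of $G_p$ is accounted for.

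For disjointness I would attach to a double coset the lattice $L:=M_{\lambda,a,b}\mathbf{Z}_p^2$ together with the three integers
\[
v_1=v\!\left(L\cap\mathbf{Q}_pe_1\right)=a+b+\lambda,\qquad v_2=v\!\left(\mathrm{pr}\,L\right)=b,\qquad v_3=v\!\left((L+\mathbf{Q}_pe_2)\cap\mathbf{Q}_pe_1\right)=a+b,
\]
where $\mathrm{pr}\colon\mathbf{Q}_p^2\twoheadrightarrow\mathbf{Q}_p^2/\mathbf{Q}_pe_1$ and $v$ of a rank-one $\mathbf{Z}_p$-lattice means the valuation of a generator. Right multiplication by $G_p^\circ$ leaves $L$ unchanged, and $T(\mathbf{Z}_p)$ stabilizes the two coordinate lines acting on each through $\mathbf{Z}_p^\times$, so $v_1,v_2,v_3$ depend only on the double coset; from them one recovers $b=v_2$, $a=v_3-v_2$, $\lambda=v_1-v_3$ (and $\lambda\ge0$ is automatic, since $v_1\ge v_3$). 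Hence distinct triples $(\lambda,a,b)$ lie in distinct cosets, which together with exhaustiveness proves the lemma. The one point needing thought is the choice of $v_3$: the determinant and the two ``axis'' valuations only pin down $a+2b+\lambda$, $b$ and $a+b+\lambda$, which is not enough to separate $a$ from $\lambda$, so one genuinely needs a third invariant recording how far $L$ fails to split along the fixed line $\mathbf{Q}_pe_1$. Everything else is bookkeeping, and the argument is parallel to the non-split case \Cref{Dns}, with one extra parameter.
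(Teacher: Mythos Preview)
Your proof is correct and takes a genuinely different route from the paper's. The paper first quotes the decomposition $G_p=\bigcup_{\lambda\ge0}H_p\,n_0s(\lambda)\,G_p^\circ$ from Murase--Sugano (\cite{murase1996shintani}) to get existence, and then proves disjointness by a direct matrix computation inside $\mathfrak{G}_p$: assuming an equality $\bigl(n_0s(\lambda),\left[\begin{smallmatrix}p^a&\\&1\end{smallmatrix}\right]p^b\bigr)=\bigl(h\,n_0s(\lambda_0)k_1,\,h^{-1}\left[\begin{smallmatrix}p^{a_0}&\\&1\end{smallmatrix}\right]p^{b_0}k_2\bigr)$, it reads off $\lambda=\lambda_0$ from \Cref{Murasedecom}, and then recovers $b$ and $a$ by looking at entries and valuations. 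You instead reduce cleanly to $T(\mathbf{Z}_p)\backslash G_p/G_p^\circ$ via the bijection $(g,h)\mapsto\iota(h)g$, prove exhaustiveness from scratch by an elementary Iwasawa factorisation, and establish disjointness through the three lattice invariants $v_1,v_2,v_3$. Your argument is self-contained (no appeal to \cite{murase1996shintani}) and the lattice-invariant idea is conceptually cleaner --- it makes transparent why one needs an invariant beyond determinant and the two axis valuations to separate $a$ from $\lambda$. The paper's approach is shorter because it outsources the hard part, but yours would slot in as an independent proof of \Cref{Murasedecom} as well.
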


\begin{defn}\label{char}
\begin{enumerate}
 \item If $H$ is non-split, $\gamma_{\lambda,a}:=(s(\lambda),p^a)\in \mathfrak{G}_p$, and $\xi_{\lambda,a}$ is the characteristic function attached to the double coset $\mathfrak{H}_p\gamma_{\lambda,a}\mathfrak{G}_p^\circ.$
    for $\lambda\in\mathbf{Z}_{\geq 0}$ and $a\in\mathbf{Z}$. 
    \item If $H$ is split, then $\gamma_{\lambda,a,b}:=(n_0s(\lambda),\left[\begin{smallmatrix}
        p^a & \\
         & 1
    \end{smallmatrix}\right]p^b)\in\mathfrak{G}_p$ and $\xi_{\lambda,a,b}$ is the characteristic function attached to the double coset $\mathfrak{H}_p\gamma_{\lambda,a,b}\mathfrak{G}_p^\circ$
    for $\lambda\in\mathbf{Z}_{\geq 0}$ and $a,b\in\mathbf{Z}$.
    \end{enumerate}
\end{defn}
 \noindent \Cref{Dns} and \Cref{Ds} give us the following decompositions into subspaces 
$$C_{\mathfrak{H}_p\backslash c}^{\infty,\mathrm{sph}}=\begin{cases}
\bigoplus_{\substack{\lambda\in\mathbf{Z}_{\geq 0} \\ a\in\mathbf{Z}}}\mathbf{C}\cdot \xi_{\lambda,a}\ ,\ H \text{ non-split}\\
    \bigoplus_{\substack{\lambda\in\mathbf{Z}_{\geq 0}\\ a,b\in\mathbf{Z}}}\mathbf{C}\cdot \xi_{\lambda,a,b}\ ,\ H\text{ split}
\end{cases}\ \ ,\ \ \mathcal{L}^{H_p}=\begin{cases}
\bigoplus_{\substack{\lambda\in\mathbf{Z}_{\geq 0} \\ a\in\mathbf{Z}}}\mathbf{Z}[1/p]\cdot \xi_{\lambda,a}\ ,\ H \text{ non-split}\\
    \bigoplus_{\substack{\lambda\in\mathbf{Z}_{\geq 0}\\ a,b\in\mathbf{Z}}}\mathbf{Z}[1/p]\cdot \xi_{\lambda,a,b}\ ,\ H\text{ split}
\end{cases}$$
$$\mathcal{L}_1^{H_p}=\begin{cases}\ \ \left(\bigoplus
_{a\in\mathbf{Z}}\#H_Z(\mathbf{F}_p)\mathbf{Z}[1/p]\cdot \xi_{0,a}\right) &\oplus\  \left(\bigoplus_{\substack{\lambda\in\mathbf{Z}_{\geq 1}\\ a\in\mathbf{Z}}}\mathbf{Z}[1/p]\cdot \xi_{\lambda,a}\right)\ ,\ H\text{ non-split}\\
\left(\bigoplus
_{a,b\in\mathbf{Z}}\#H_Z(\mathbf{F}_p)\mathbf{Z}[1/p]\cdot \xi_{0,a,b}\right)\ &\oplus\ \left(\bigoplus_{\substack{\lambda\in\mathbf{Z}_{\geq 1}\\ a,b\in\mathbf{Z}}}\mathbf{Z}[1/p]\cdot \xi_{\lambda,a,b}\right)\ ,\ H\text{ split}

\end{cases}$$

\section{Universal Hecke modules}\label{sec 3}
\subsection{Structure theorems}\label{Sec the modules C and L} We note that the results of this section hold with $\mathbf{C}$ replaced by any algebraically closed field of characteristic zero.
\begin{thm}\label{thmCunivfree}
    The modules $C_{\mathfrak{H}_p\backslash c}^{\infty,\mathrm{sph}}$ and $\mathcal{L}^{H_p}$ are free of rank one over $\mathcal{R}^{\mathfrak{G}_p}$ and $\mathcal{R}_{\mathbf{Z}[1/p]}^{\mathfrak{G}_p}$ respectively.
    \end{thm}

    Before going into the proof, we note that for the split case, the result at a non-integral level (i.e. only for the module $C_{\mathfrak{H}_p\backslash c}^{\infty,\mathrm{sph}}$) can be deduced from a very general result of Sakellarides \cite[Corollary $8.0.4$] {sakellaridis2013spherical}. Here we give a direct proof that covers both non-split and split cases, and more importantly gives us a handle on integrality. Our approach is inspired by the proof of uniqueness of unramified Shintani functions in the style of \cite{2003}.
    \begin{proof}
    We firstly prove the result in the case where $H$ is non-split. Note that in this case $\ch(\mathfrak{H}_p\mathfrak{G}_p^\circ)=\xi_{0,0}$. It is clear by construction that 
    $$\mathcal{S}_p^{-a}*\xi_{0,0}=\xi_{0,a}.$$
    We write $\phi_{s(\lambda)}$ for the characteristic function $\ch(G_p^\circ s(\lambda)^{-1} G_p^\circ)\in\mathcal{H}_{G_p}^\circ$. We then claim that
    \begin{align}\phi_{s(\lambda)^{-1}}*\xi_{0,0}=\xi_{\lambda,0}.
    \end{align}
    The function 
    $\left(\phi_{s(\lambda)^{-1}}*\xi_{0,0}\right)(x,y)=\int_{G_p^\circ s(\lambda)^{-1} G_p^\circ} \xi_{0,0}(xg,y)dg$ 
    is supported on $\mathfrak{H}_p\mathfrak{G}_p^\circ(s(\lambda),1)\mathfrak{G}_p^\circ$. We decompose 
    $$G_p^\circ s(\lambda) G_p^\circ=\left(\bigsqcup_{\beta\in \mathbf{Z}/p^{\lambda}\mathbf{Z}}\left[\begin{smallmatrix}
            p^{\lambda} & \beta\\
            & 1
        \end{smallmatrix}\right] G_p^\circ\right)  \sqcup \left(\bigsqcup_{\substack{0<i<\lambda\\ \\ \beta\in(\mathbf{Z}/p^{i}\mathbf{Z})^\times}}\left[\begin{smallmatrix}
            p^{i} & \beta\\
            &p^{\lambda-i}
        \end{smallmatrix}\right] G_p^\circ\right)\sqcup\left[\begin{smallmatrix}
            1 & \\
            &p^{\lambda}
        \end{smallmatrix}\right] G_p^\circ.$$
        We now use the Iwasawa decomposition algorithm to show that all these left cosets get absorbed into one single double coset, namely $H_p^\circ s(\lambda) G_p^\circ$, upon multiplying on the left with $H_p^\circ$.
        Firstly, one can check that 
        \begin{align*}\left[\begin{smallmatrix}
            1 & b\\
            -bD & 1
        \end{smallmatrix}\right] s(\lambda)&\equiv\left[\begin{smallmatrix}
            p^\lambda & b\\
             & 1
        \end{smallmatrix}\right]\left[\begin{smallmatrix}
            1+b^2 D & \\
             & 1
        \end{smallmatrix}\right]\mod G_p^\circ\\
        &\equiv \left[\begin{smallmatrix}
            p^\lambda & b\\
             & 1
        \end{smallmatrix}\right] \mod G_p^\circ
        \end{align*}
        for all $b\in\mathbf{Z}_p$. As we range $b\in\mathbf{Z}/p^\lambda \mathbf{Z}$, this covers all right cosets appearing in the first disjoint union above. 
        Secondly, one can also check that for $0<i<\lambda$ and $b\in\mathbf{Z}_p^\times$, we have
        \begin{align*}
            \left[\begin{smallmatrix}
                p^i & b\\
                -bD & p^i
            \end{smallmatrix}\right]s(\lambda)&\equiv \left[\begin{smallmatrix}
                p^{\lambda-i} & b\\
                & p^i
            \end{smallmatrix}\right]\left[\begin{smallmatrix}
                p^{2i}+b^2D & \\
                & 1
            \end{smallmatrix}\right]\mod G_p^\circ\\
            &\equiv\left[\begin{smallmatrix}
                p^{\lambda-i} & b\\
                & p^i
            \end{smallmatrix}\right]\mod G_p^\circ.
        \end{align*}
        This covers all left cosets in the second disjoint union above. Finally we have $$\left[\begin{smallmatrix}
             & 1\\
             -D& 
        \end{smallmatrix}\right]s(\lambda)\equiv \left[\begin{smallmatrix}
            1 & \\
            & p^\lambda 
        \end{smallmatrix}\right]\mod G_p^\circ.$$
        This precisely implies that 
        $\mathfrak{H}_p\mathfrak{G}_p^\circ(s(\lambda),1)\mathfrak{G}_p^\circ=\mathfrak{H}_p(s(\lambda),1)\mathfrak{G}_p^\circ.$ To prove $(1)$ it remains to show that $$\int_{G_p^\circ s(\lambda)^{-1} G_p^\circ} \xi_{0,0}(s(\lambda)g,1)\ dg$$is equal to $1$. It's not hard to see that the value of this integral  is given explicitly by $$\#\left\{ h\in\mathfrak{H}_p/\mathfrak{H}_p^\circ\ |\ (s(\lambda)^{-1},1)h\in \mathfrak{G}_p^\circ (s(\lambda)^{-1},1)\mathfrak{G}_p^\circ\right\}=1.$$
        This proves $(1)$. Using \Cref{Dns}, it is now easy to deduce that $C_{\mathfrak{H}_p\backslash c}^{\infty,\mathrm{sph}}$ and $\mathcal{L}^{H_p}$ are free of rank one over $\mathcal{R}^{\mathfrak{G}_p}$ and $\mathcal{R}_{\mathbf{Z}[1/p]}^{\mathfrak{G}_p}$ respectively, generated by $\xi_{0,0}$.\\
        \\
        \noindent We now prove the result in the case where $H$ is split. In this case $\ch(\mathfrak{H}_p\mathfrak{G}_p^\circ)=\xi_{0,0,0}$. It once again follows straight from the definitions that 
        $$\mathcal{S}_p^{-n}A_p^{-m} * \xi_{\lambda,a,b}=\xi_{\lambda,a+m,b+n}$$
        for all $\lambda\in\mathbf{Z}_{\geq 0}, a,b,n,m\in\mathbf{Z}$. We once again want to compute $\phi_{s(\lambda)^{-1}}* \xi_{0,0,0}$, which is again supported on $\mathfrak{H}_p\mathfrak{G}_p^\circ(s(\lambda),1)\mathfrak{G}_p^\circ$. Things in the split case however are more complicated since $v_p\circ \det$ does not completely detect if an element of $H_p$ lies in $H_p^\circ$ or not. In other words, $\mathfrak{H}_p\mathfrak{G}_p^\circ(s(\lambda),1)\mathfrak{G}_p^\circ$ strictly contains $\mathfrak{H}_p(n_0s(\lambda),1)\mathfrak{G}_p^\circ$. We have a filtration 
        $$\mathcal{A}_0\subseteq \mathcal{A}_1\subseteq \dots \subseteq \mathcal{A}_n\subseteq \dots \subset C_{\mathfrak{H}_p\backslash c}^{\infty,\mathrm{sph}}$$
        where $\mathcal{A}_n=\bigoplus_{\substack{0\leq\lambda\leq n \\ a,b\in\mathbf{Z}}}\mathbf{C}\cdot \xi_{\lambda,a,b}$. Our goal is to relate the support $\mathfrak{H}_p\mathfrak{G}_p^\circ (s(\lambda),1)\mathfrak{G}_p^\circ$ to the above filtration in a way that allows us to proceed inductively. Considering the double coset decomposition of $G_p^\circ s(\lambda) G_p^\circ$ given above, we see that the double coset $H_p G_p^\circ$ absorbs precisely the left cosets $\left[\begin{smallmatrix}
            p^\lambda & \\
             & 1
        \end{smallmatrix}\right]G_p^\circ$ and $\left[\begin{smallmatrix}
            1 & \\
             & p^\lambda
        \end{smallmatrix}\right]G_p^\circ$. For fixed $0<i\leq \lambda$, the double coset $H_p\left[\begin{smallmatrix}
            p^i & 1\\
            & p^{\lambda-i}
        \end{smallmatrix}\right]G_p^\circ=H_pn_0 s(i) G_p^\circ$ absorbs precisely the left cosets $\left[\begin{smallmatrix}
            p^i & (\mathbf{Z}/p^i\mathbf{Z})^\times \\
            & p^{\lambda-i}
        \end{smallmatrix}\right]G_p^\circ$ and $\left[\begin{smallmatrix}
            p^\lambda & p^{\lambda-i}(\mathbf{Z}/p^i\mathbf{Z})^\times\\
            & 1
        \end{smallmatrix}\right]G_p^\circ$. This exhausts all left cosets present in the decomposition of $G_p^\circ s(\lambda) G_p^\circ$. Indeed, for $u\in (\mathbf{Z}/p^i\mathbf{Z})^\times$, we have the identities
        \begin{align}\left[\begin{smallmatrix}
            p^{\lambda-i}u^{-1} & \\
             & 1
        \end{smallmatrix}\right]p^{i-\lambda}\left[\begin{smallmatrix}
            p^i & u \\
            & p^{\lambda-i}
        \end{smallmatrix}\right]\left[\begin{smallmatrix}
            u & \\
             & 1
        \end{smallmatrix}\right]=n_0s(i)\  , \left[\begin{smallmatrix}
            p^{i-\lambda}u^{-1} & \\
             & 1
        \end{smallmatrix}\right]\left[\begin{smallmatrix}
            p^\lambda & p^{\lambda-i} u\\
            & 1
        \end{smallmatrix}\right]\left[\begin{smallmatrix}
            u & \\
             & 1
        \end{smallmatrix}\right]=n_0s(i).
        \end{align}
        This together with \Cref{Ds} give us the decomposition
        $$\scalemath{0.9}{\mathfrak{H}_p\mathfrak{G}_p^\circ(s(\lambda),1) \mathfrak{G}_p^\circ=\mathfrak{H}_p\gamma_{\lambda,0,0}\mathfrak{G}_p^\circ\ \sqcup\left(\bigsqcup_{0\leq i<\lambda}\mathfrak{H}_p\gamma_{i,\lambda-i,i-\lambda}\mathfrak{G}_p^\circ\right)  \sqcup\left(\bigsqcup_{0\leq i<\lambda} \mathfrak{H}_p\gamma_{i,i-\lambda,0}\mathfrak{G}_p^\circ\right).}$$
        In a similar fashion as before, it is not hard to see that on each of these double cosets, the function $\phi_{s(\lambda)^{-1}}*\xi_{0,0,0}$ takes the value $1$. In other words, we have shown that
        $$\phi_{s(\lambda)^{-1}}*\xi_{0,0,0}=\xi_{\lambda,0,0}+\sum_{0\leq i<\lambda}(\xi_{i,\lambda-i,i-\lambda}+\xi_{i,i- \lambda,0}).$$
        Thus, $\phi_{s(\lambda)^{-1}}$ always sends $\xi_{0,0,0}$ to an element of $\mathcal{A}_\lambda-\mathcal{A}_{\lambda-1}$. From here, the result follows from a straight forward argument, both at a non-integral and integral level.
    \end{proof}

\noindent 
\begin{lem}\label{lemmap}The linear isomorphism
$i:C_{\mathfrak{H}_p\backslash c}^{\infty,\mathrm{sph}}\simeq\mathbf{C}[\mathfrak{G}_p/\mathfrak{G}_p^\circ]_{\mathfrak{H}_p}$ given in \emph{\Cref{sec inv to coinv}}, is $\mathcal{R}^{\mathfrak{G}_p}$-euivariant.
    For $\lambda\in\mathbf{Z}_{\geq 0}$ and $a,b\in\mathbf{Z}$ we have 
    \begin{align*}
       i(\xi_{\lambda,a})&=\#H_Z(\mathbf{Z}/p^\lambda\mathbf{Z})\left[\ch\left(\gamma_{\lambda,a}\mathfrak{G}_p^\circ\right)\right]\ \text{ if }H\text{ is non-split}\\
       i(\xi_{\lambda,a,b})&=\#H_Z(\mathbf{Z}/p^\lambda\mathbf{Z})\left[\ch\left(\gamma_{\lambda,a,b}\mathfrak{G}_p^\circ\right)\right]\ \text{ if }H\text{ is split}
    \end{align*}
    \begin{proof}
    The equivariance property follows by the equivariance of the inverse to $i$ given by integrating over $\mathfrak{H}_p$ (\Cref{sec inv to coinv}). For the two equalities, it suffices to show that 
    $$\#H_Z(\mathbf{Z}/p^\lambda\mathbf{Z})=\begin{dcases*}
                 \vol_{\mathfrak{H}_p}(\mathfrak{H}_p\cap \gamma_{\lambda,a}\mathfrak{G}_p^\circ \gamma_{\lambda,a}^{-1})^{-1},\ H \text{ non-split}\\
                 \vol_{\mathfrak{H}_p}(\mathfrak{H}_p\cap \gamma_{\lambda,a,b}\mathfrak{G}_p^\circ \gamma_{\lambda,a,b}^{-1})^{-1},\ H\text{ split}.
                 \end{dcases*}$$
            This follows from the following identities
             $$
    s(\lambda)^{-1}\left[\begin{smallmatrix}
                     a & b \\
                     -bD & a
                 \end{smallmatrix}\right]s(\lambda)=\left[\begin{smallmatrix}
                     a & bp^{-\lambda}\\
                      bp^\lambda D & a 
                 \end{smallmatrix}\right]\ \ , \ \
s(\lambda)^{-1}n_0^{-1}\left[\begin{smallmatrix}
                     a & \\
                     & b
                 \end{smallmatrix}\right]n_0s(\lambda)=\left[\begin{smallmatrix}
                     a &(a-b) p^{-\lambda}\\
                     & b
                 \end{smallmatrix}\right].
             $$
    \end{proof}
\end{lem}

\begin{defn}\label{def level subgroup}
    For any $f\in\mathbf{Z}_{\geq 1}$, we define $H_p^\circ[p^f]$ to be the open compact subgroup of $H_p^\circ$ given by the pullback of $Z_G(\mathbf{Z}/p^f\mathbf{Z})\subseteq H(\mathbf{Z}/p^f\mathbf{Z})$ under the projection $H_p^\circ \rightarrow H(\mathbf{Z}/p^f\mathbf{Z})$. We also define $\mathfrak{G}_p^\circ[p^f]$ to be the open compact subgroup of $\mathfrak{G}_p^\circ$ given by $\GL_2(\mathbf{Z}_p)\times H_p^\circ[p^f].$
\end{defn}
\begin{rem}\label{volume of level subgroup}
    For $f\in\mathbf{Z}_{\geq 1}$, the volume of $H_p^\circ[p^f]$ is determined by the following formula
    $$\frac{1}{\vol_{H_p}(H_p^\circ[p^f])}=
        \#H_Z(\mathbf{Z}/p^f \mathbf{Z}).$$
\end{rem}

\begin{defn}\label{defnlattices}
    One can define $\mathbf{Z}[1/p]$-lattices in $\mathbf{C}[\mathfrak{G}_p/\mathfrak{G}_p^\circ]$, in the spirit of \cite[Definition $3.2.1$]{Loeffler_2021}.
    \begin{align*}
        \mathcal{H}^{H_p}&:=\mathrm{span}_{\mathbf{Z}[1/p]}\left\{\frac{1}{\vol_{\mathfrak{H}_p}(\mathfrak{H}_p\cap g\mathfrak{G}_p^\circ g^{-1})}\ch(g\mathfrak{G}_p^\circ)\ |\ g\in \mathfrak{G}_p\right\}\\
    \mathcal{H}_1^{H_p}&:=\mathrm{span}_{\mathbf{Z}[1/p]}\left\{\frac{1}{\vol_{\mathfrak{H}_p}(\mathfrak{H}_p\cap g\mathfrak{G}_p^\circ[p] g^{-1})}\ch(g\mathfrak{G}_p^\circ)\ |\ g\in \mathfrak{G}_p\right\}.
    \end{align*}
We denote their images in $\mathbf{C}[\mathfrak{G}_p/\mathfrak{G}_p^\circ]_{\mathfrak{H}_p}$ by $\mathcal{I}^{H_p}:=\mathcal{I}^{H_p}(\mathbf{Z}[1/p])$ and $\mathcal{I}_1^{H_p}:=\mathcal{I}_1^{H_p}(\mathbf{Z}[1/p])$ respectively.
\end{defn}
    \begin{lem}\label{lemlattices}
        Let $i$ be the Hecke equivariant map given in \Cref{sec inv to coinv}. Then the following are true:
        \begin{enumerate}
            \item The lattice $\mathcal{I}^{H_p}$ is the image under $i$, of $\mathcal{L}^{H_p}$
            \item The lattice $\mathcal{I}_1^{H_p}$ is the image under $i$, of $\mathcal{L}_1^{H_p}$.
        \end{enumerate}
        \begin{proof}
             The first part is immediate by construction. The second part follows from the definition of the lattice $\mathcal{L}_1^{H_p}$, the fact that $\vol_{H_p}(H_p^\circ[p])=\# H_Z(\mathbf{F}_p)$ and the fact that for $\lambda\geq 1$ we have
             \begin{align*}
                 \vol_{\mathfrak{H}_p}(\mathfrak{H}_p\cap \gamma_{\lambda,a}\mathfrak{G}_p^\circ \gamma_{\lambda,a}^{-1})^{-1}&=\vol_{\mathfrak{H}_p}(\mathfrak{H}_p\cap \gamma_{\lambda,a}\mathfrak{G}_p^\circ[p] \gamma_{\lambda,a}^{-1})^{-1},\ H \text{ non-split}\\
                 \vol_{\mathfrak{H}_p}(\mathfrak{H}_p\cap \gamma_{\lambda,a,b}\mathfrak{G}_p^\circ \gamma_{\lambda,a,b}^{-1})^{-1}&=\vol_{\mathfrak{H}_p}(\mathfrak{H}_p\cap \gamma_{\lambda,a,b}\mathfrak{G}_p^\circ[p] \gamma_{\lambda,a,b}^{-1})^{-1},\ H\text{ split}
                 \end{align*}
                 This once again follows from the matrix idenities given at the end of the proof of \Cref{lemmap}.
    \end{proof}
    \end{lem}
    
    \begin{cor}\label{cor Co and I free}
        The modules $\mathbf{C}[\mathfrak{G}_p/\mathfrak{G}_p^\circ]_{\mathfrak{H}_p}$ and $\mathcal{I}^{H_p}$ are free of rank one over $\mathcal{R}^{\mathfrak{G}_p}$ and $\mathcal{R}_{\mathbf{Z}[1/p]}^{\mathfrak{G}_p}$ respectively, generated by $[\ch(\mathfrak{G}_p^\circ)]$.
        \begin{proof}
            This follows at once from \Cref{thmCunivfree} and \Cref{lemlattices}.
        \end{proof}
    \end{cor}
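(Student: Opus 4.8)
The plan is to obtain this purely formally, by transporting the freeness established upstream across the isomorphism $\zeta_\mathrm{w}^{H_p,\mathrm{unv}}$; there is no genuine new work to do at this point, the substance having been absorbed into \Cref{thmCunivfree} and \Cref{prop inj}.

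First I would record what the preceding results give. By \Cref{thmCunivfree}, $C_{\mathfrak{H}_p\backslash c}^{\infty,\mathrm{unv}}$ is free of rank one over $\mathcal{R}^{H_p}$, with generator $\xi_{0,0}$ in the non-split case and $\xi_{0,0,0}$ in the split case, and $\mathcal{L}^{H_p,\mathrm{unv}}$ is free of rank one over $\mathcal{R}_{\mathbf{Z}[1/p]}^{H_p}$ with the same generator. By construction $\zeta_\mathrm{w}^{H_p,\mathrm{unv}}$ carries this generator to $[\ch(\mathfrak{G}_p^\circ)]$, and by \Cref{prop inj} it is an isomorphism of $\mathcal{R}^{H_p}$-modules. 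Hence $\mathbf{k}[\mathfrak{G}_p/\mathfrak{G}_p^\circ]_{\mathfrak{H}_p}$ is free of rank one over $\mathcal{R}^{H_p}$, generated by $[\ch(\mathfrak{G}_p^\circ)]$ (this also re-proves, now with freeness, the cyclicity of \Cref{corcyclic}).

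For the integral lattice I would use \Cref{lemlattices}(1), which identifies $\mathcal{I}^{H_p,\mathrm{unv}}$ with the image $\zeta_\mathrm{w}^{H_p,\mathrm{unv}}(\mathcal{L}^{H_p,\mathrm{unv}})$; since $\zeta_\mathrm{w}^{H_p,\mathrm{unv}}$ is injective (\Cref{prop inj}) and, by the explicit formulas of \Cref{lemmap}, restricts to a $\mathcal{R}_{\mathbf{Z}[1/p]}^{H_p}$-equivariant map sending the $\mathbf{Z}[1/p]$-span of the $\xi_{\lambda,a}$ (resp. $\xi_{\lambda,a,b}$) bijectively onto that span of the $\#H_Z(\mathbf{Z}/p^\lambda\mathbf{Z})[\ch(\gamma_{\lambda,a}\mathfrak{G}_p^\circ)]$, it gives an isomorphism of $\mathcal{R}_{\mathbf{Z}[1/p]}^{H_p}$-modules $\mathcal{L}^{H_p,\mathrm{unv}}\xrightarrow{\ \sim\ }\mathcal{I}^{H_p,\mathrm{unv}}$. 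Freeness of rank one of $\mathcal{L}^{H_p,\mathrm{unv}}$ over $\mathcal{R}_{\mathbf{Z}[1/p]}^{H_p}$ (\Cref{thmCunivfree}) then transfers to $\mathcal{I}^{H_p,\mathrm{unv}}$, with generator the image of $\xi_{0,0}$ (resp. $\xi_{0,0,0}$), namely $[\ch(\mathfrak{G}_p^\circ)]$; and this class does lie in $\mathcal{I}^{H_p,\mathrm{unv}}$, as seen by taking $(g,h)=(1,1)$ in \Cref{defnlattices} and using $\vol_{H_p}(H_p^\circ)=1$. The only point warranting a line of care is that $\zeta_\mathrm{w}^{H_p,\mathrm{unv}}$ really does match the two lattices and their distinguished generators — but that is exactly \Cref{lemmap} together with \Cref{lemlattices}, so no obstacle remains.
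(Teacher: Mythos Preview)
Your proof is correct and follows essentially the same approach as the paper's one-line proof, which cites \Cref{thmCunivfree}, \Cref{lemmap}, and \Cref{prop inj}: transport freeness of $C_{\mathfrak{H}_p\backslash c}^{\infty,\mathrm{unv}}$ and $\mathcal{L}^{H_p,\mathrm{unv}}$ across the isomorphism $\zeta_\mathrm{w}^{H_p,\mathrm{unv}}$. Your explicit invocation of \Cref{lemlattices} to identify $\mathcal{I}^{H_p,\mathrm{unv}}$ with $\zeta_\mathrm{w}^{H_p,\mathrm{unv}}(\mathcal{L}^{H_p,\mathrm{unv}})$ is a harmless expansion; the paper presumably folds this into its citation of \Cref{lemmap}, since \Cref{lemlattices} is itself an immediate consequence of \Cref{lemmap}.
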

\subsection{Multiplicity one}\label{sec mult 1}
 As mentioned in the introduction, the module-theoretic structure results of \Cref{UHM} give a new approach to a ``multiplicity $\leq 1$'' statement for a certain family of representations. Recall that $H_Z$ is defined to be the quotient $H/Z_G$. Firstly, we make the following definition, where unlike most places in the literature, we do not require irreducibility.
 \begin{defn}\label{def non-deg unr}
     A smooth $G_p$-representation $\pi_p$ is unramified if it contains a non-zero $G_p^\circ$-fixed vector. Additionally, it is non-degenerate if this vector is unique up to scalars, and generates $\pi_p$ as a $G_p$-representation. 
 \end{defn}
  \noindent \noindent As mentioned in the introduction, for zero characteristic and irreducible representations, the multiplicity one was proven in \cite{Waldspurger} and later \cite{saito1993tunnell}. For the reducible case, the multiplicity one can be deduced from the relatively recent work of \cite{moeglin2010conjecture} via $\mathrm{SO}_3\simeq \mathrm{PGL}_2$. In positive characteristic and $H$ split the multiplicity one can be found in \cite{minguez:hal-01176198}. For $H$ split, zero characteristic and irreducible, the existence of good test vectors is part of \cite{prasad1990trilinear}. We finally note that statements regarding equality of dimension in the $\ell$-modular case, can also be made, but we won't go into them. A lot of the proofs mentioned above are rather lengthy and indirect and distinguish between several cases. The proof given here provides a unified approach that takes cares of all unramified non-degenerate representations at once for both zero and positive characteristic.
   \begin{thm}\label{mult1}
       Let $\mathbf{F}$ be an algebraically closed field of characteristic which does not divide $p\cdot \#(H/Z_G)(\mathbf{F}_p)$. Let $\pi_p$ be an unramified non-degenerate (not necessarily irreducible) $\mathbf{F}$-linear $G_p$-representation and let $\chi_p$ be an unramified $\mathbf{F}$-linear character of $H_p$. Then
    $$\dim_\mathbf{F}\Hom_{\mathfrak{H}_p}(\pi_p\boxtimes \chi_p,\mathbf{ 1})\leq 1.$$
    Furthermore, every such linear form is completely determined by its value on the spherical vector.
   \end{thm}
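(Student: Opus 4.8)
The plan is to deduce everything from the cyclicity and freeness results already established for the universal Hecke modules. The key observation is that a linear form $\ell\in\Hom_{\mathfrak{H}_p}(\pi_p\boxtimes\chi_p,\mathbf{1})$ is the same data as a $(\mathfrak{H}_p\times\mathfrak{G}_p)$-equivariant map $\zeta$ out of $\mathcal{H}(\mathfrak{G}_p)$ into $V^\vee=(\pi_p\boxtimes\chi_p)^\vee$, by the identification recorded after \Cref{maps}. Since $\pi_p$ is unramified and non-degenerate, its smooth dual is also unramified and generated over $\mathfrak{G}_p$ by the spherical line; more precisely, $(V^\vee)^{\mathfrak{G}_p^\circ}$ is one-dimensional and generates $V^\vee$. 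So $\zeta$ restricted to $\mathbf{k}[\mathfrak{G}_p/\mathfrak{G}_p^\circ]$ descends to a Hecke-equivariant map $\bar\zeta:\mathbf{k}[\mathfrak{G}_p/\mathfrak{G}_p^\circ]_{\mathfrak{H}_p}\to (V^\vee)^{\mathfrak{G}_p^\circ}$, and by \Cref{equiv} this is $\mathcal{R}^{H_p}$-equivariant. Because $V^\vee$ is generated by its spherical vector and $\zeta$ is $\mathfrak{G}_p$-equivariant on the right, the whole form $\ell$ (equivalently $\zeta$) is determined by the single value $\zeta(\ch(\mathfrak{G}_p^\circ))\in(V^\vee)^{\mathfrak{G}_p^\circ}$. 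This already gives the ``determined by the value on the spherical vector'' assertion, and reduces the dimension bound to showing that the space of such values that actually extend to an equivariant $\zeta$ is at most one-dimensional.

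For the dimension bound itself, I would argue as follows. The module $\mathbf{k}[\mathfrak{G}_p/\mathfrak{G}_p^\circ]_{\mathfrak{H}_p}$ is free of rank one over $\mathcal{R}^{H_p}$ by \Cref{cor Co and I free}, generated by $[\ch(\mathfrak{G}_p^\circ)]$. On the other hand $(V^\vee)^{\mathfrak{G}_p^\circ}$, with its $\mathcal{R}^{H_p}$-module structure coming from the spherical Hecke algebra, is one-dimensional over $\mathbf{F}$ and the action factors through a character $\Theta:\mathcal{R}^{H_p}\to\mathbf{F}$ (the spherical Hecke eigensystem of $\pi_p\boxtimes\chi_p$; the fact that it factors through $\mathcal{R}^{H_p}$ rather than all of $\mathcal{H}_{\mathfrak{G}_p}^\circ$ is exactly the condition $\omega_{\pi_p}=\chi_p|_{\mathbf{Q}_p^\times}$, and when that condition fails the relevant central element acts by incompatible scalars, forcing $\mathrm{Hom}=0$). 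An $\mathcal{R}^{H_p}$-linear map from a free rank-one module to a one-dimensional module on which $\mathcal{R}^{H_p}$ acts through $\Theta$ is determined by where the generator goes, and any choice of image works, so the space of such maps is one-dimensional. Hence $\dim_\mathbf{F}\mathrm{Hom}_{\mathcal{R}^{H_p}}(\mathbf{k}[\mathfrak{G}_p/\mathfrak{G}_p^\circ]_{\mathfrak{H}_p},(V^\vee)^{\mathfrak{G}_p^\circ})\le 1$; combined with the previous paragraph this yields $\dim_\mathbf{F}\Hom_{\mathfrak{H}_p}(\pi_p\boxtimes\chi_p,\mathbf{1})\le 1$.

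The step that needs the most care, and where the hypothesis on $\mathrm{char}(\mathbf{F})$ enters, is the passage from an abstract $\mathfrak{H}_p$-invariant functional on $\pi_p\boxtimes\chi_p$ to an $\mathcal{R}^{H_p}$-linear map out of the coinvariant module, i.e.\ checking that restriction to $\mathbf{k}[\mathfrak{G}_p/\mathfrak{G}_p^\circ]$ really is injective on $\Hom$-spaces and that the construction is reversible. Injectivity is clear since $V^\vee$ is generated by its spherical vector, so a functional vanishing there vanishes everywhere. For surjectivity one must check that an $\mathcal{R}^{H_p}$-linear map on $\mathbf{k}[\mathfrak{G}_p/\mathfrak{G}_p^\circ]_{\mathfrak{H}_p}$ genuinely extends to a $(\mathfrak{H}_p\times\mathfrak{G}_p)$-equivariant map on all of $\mathcal{H}(\mathfrak{G}_p)$ valued in $V^\vee$ — here one uses that $\mathcal{H}(\mathfrak{G}_p)\otimes_{\mathcal{H}(\mathfrak{G}_p^\circ)}(V^\vee)^{\mathfrak{G}_p^\circ}\twoheadrightarrow V^\vee$, which is where $p\nmid\mathrm{char}(\mathbf{F})$ (for the idempotent $\ch(\mathfrak{G}_p^\circ)/\vol(\mathfrak{G}_p^\circ)$ to make sense) and $\#(H/Z_G)(\mathbf{F}_p)\ne 0$ in $\mathbf{F}$ (which intervenes in \Cref{lemmap} and hence in the identification of $\mathcal{I}^{H_p,\mathrm{unv}}$ with the coinvariants, and more to the point in keeping $\mathbf{k}[\mathfrak{G}_p/\mathfrak{G}_p^\circ]_{\mathfrak{H}_p}$ free of rank one rather than having extra torsion). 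Finally the last sentence — equality for $\mathbf{F}=\mathbf{C}$ or $\overline{\mathbf{Q}}_\ell$ precisely when $\omega_{\pi_p}=\chi_p|_{\mathbf{Q}_p^\times}$ — follows by constructing an explicit nonzero period, which the paper does later via the zeta integrals $\mathcal{Z}^{H_p}$ and $\mathfrak{z}_p^{ns}$; I would simply invoke that construction for the ``if'' direction and note that incompatibility of central characters kills the $\Hom$-space for the ``only if'' direction.
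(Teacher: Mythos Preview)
Your approach has a genuine gap in the reducible case. You claim that the smooth dual $(\pi_p\boxtimes\chi_p)^\vee$ is again ``unramified and generated over $\mathfrak{G}_p$ by the spherical line'', but this fails precisely for the reducible non-degenerate representations the theorem is meant to cover. Take $\pi_p = \mathcal{B}(|\cdot|_p^{1/2},|\cdot|_p^{-1/2})$: it is non-degenerate (its unique proper nonzero submodule is Steinberg, which has no spherical vectors, so the spherical vector generates), but its contragredient $\mathcal{B}(|\cdot|_p^{-1/2},|\cdot|_p^{1/2})$ contains the trivial representation as a submodule, and the spherical vector lies in this one-dimensional sub, hence does \emph{not} generate. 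So your injectivity step ``a functional vanishing there vanishes everywhere'' breaks down. There is also some conflation between $\ell$, a functional on $\pi_p\boxtimes\chi_p$, and $\zeta$, a map \emph{into} the dual; even in the irreducible case the claimed injectivity of $\ell\mapsto\bar\zeta$ is not justified as written, since $\ch(\mathfrak{G}_p^\circ)$ does not generate $\mathcal{H}(\mathfrak{G}_p)$ under right translation.

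The paper avoids the dual entirely and works directly with the $\mathfrak{H}_p$-coinvariants of $\pi_p\boxtimes\chi_p$. Since $\pi_p$ is non-degenerate, the map $\mathbf{F}[\mathfrak{G}_p/\mathfrak{G}_p^\circ]\to\pi_p\boxtimes\chi_p$, $P\mapsto P\cdot(\varphi_p\otimes 1)$, is surjective and descends to a surjection on $\mathfrak{H}_p$-coinvariants. Cyclicity of $\mathbf{F}[\mathfrak{G}_p/\mathfrak{G}_p^\circ]_{\mathfrak{H}_p}$ over $\mathcal{H}_{\mathfrak{G}_p}^\circ$ (\Cref{corcyclic}) then forces every element of $(\pi_p\boxtimes\chi_p)_{\mathfrak{H}_p}$ to be a scalar multiple of $[\varphi_p\otimes 1]$, giving both the bound $\leq 1$ and the determination by the spherical value, via $\Hom_{\mathfrak{H}_p}(\pi_p\boxtimes\chi_p,\mathbf{1})=\Hom_\mathbf{F}\big((\pi_p\boxtimes\chi_p)_{\mathfrak{H}_p},\mathbf{F}\big)$. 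The characteristic hypothesis enters only through \Cref{lemmap}: one needs $\#H_Z(\mathbf{Z}/p^\lambda\mathbf{Z})\ne 0$ in $\mathbf{F}$ to keep $\zeta_\mathrm{w}^{H_p,\mathrm{unv}}$ surjective and hence the coinvariant module cyclic. Note that only cyclicity is used, not freeness, and no idempotent argument is needed.
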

     \begin{proof}
     We write $W_{\pi_p}^\mathrm{sph}$ for the spherical vector of $\pi_p$. 
        We firstly assume that $\mathbf{F}$ is of characteristic zero. We have a surjective linear map
         $$\mathbf{F}[\mathfrak{G}_p/\mathfrak{G}_p^\circ]\longrightarrow \pi_p\boxtimes \chi_p\ ,\ P\mapsto P\cdot (W_{\pi_p}^\mathrm{sph}\otimes 1)$$
         which descends to a surjective map on coinvariants
         $$\mathbf{F}[\mathfrak{G}_p/\mathfrak{G}_p^\circ]_{\mathfrak{H}_p}\longrightarrow (\pi_p\boxtimes \chi_p)_{\mathfrak{H}_p}\ ,\ [P]\mapsto [P\cdot (W_{\pi_p}^\mathrm{sph}\otimes 1)]. $$
         However, by \Cref{cor Co and I free}, the module $\mathbf{F}[\mathfrak{G}_p/\mathfrak{G}_p^\circ]_{\mathfrak{H}_p}$ is cyclic over $\mathcal{H}_{\mathfrak{G}_p}^\circ$ generated by $[\ch(\mathfrak{G}_p^\circ)]$. Thus, every one of its elements can be written as $\mathcal{Q}*[\ch(\mathfrak{G}_p^\circ)]=\left[\mathcal{Q}^{'}\right]$ for some $\mathcal{Q}\in \mathcal{H}_{\mathfrak{G}_p}^\circ$. Hence, every element of $(\pi_p\boxtimes \chi_p)_{\mathfrak{H}_p}$ is of the form $\left[\mathcal{Q}^{'}\cdot (W_{\pi_p}^\mathrm{sph}\otimes 1)\right]=\Theta_{\pi_p\boxtimes \chi_p}\left(\mathcal{Q}^{'}\right)[W_{\pi_p}^\mathrm{sph}\otimes 1]$, where $\Theta_{\pi_p\boxtimes \chi_p}$ denotes the spherical Hecke eigensystem of $\pi_p\boxtimes \chi_p$ (which exists by the non-degeneracy assumption). Finally, giving an element of $\Hom_{\mathfrak{H}_p}(\pi_p\boxtimes \chi_p,\mathbf{ 1})$ is the same as giving an element of $\Hom_\mathbf{F}((\pi_p\boxtimes \chi_p)_{\mathfrak{H}_p},\mathbf{1})$ and we have just seen that the latter is at most one dimensional and every element of the $\Hom$-space is determined by its value on $[W_{\pi_p}^\mathrm{sph}\otimes 1]$. This concludes the proof in the case of zero characteristic. We now explain why everything still works for $\mathbf{F}=\overline{\mathbf{F}}_\ell$ for a prime $\ell$ satisfying the conditions of the theorem. Consider the $\overline{\mathbf{F}}_\ell$-valued spherical Hecke algebra $\mathcal{H}_{\mathfrak{G}_p}^\circ(\overline{\mathbf{F}}_\ell)$. This is still a commutative algebra under convolution. Also consider the $\mathcal{H}_{\mathfrak{G}_p}^\circ(\overline{\mathbf{F}}_\ell)$-module $C_{\mathfrak{H}_p\backslash c}^{\infty,\mathrm{sph}}(\overline{\mathbf{F}}_\ell)$ defined as in \Cref{UHM} but with $\overline{\mathbf{F}}_\ell$-valued functions. By the integral part of \Cref{thmCunivfree}, $C_c^\infty(\mathfrak{H}_p\backslash\mathfrak{G}_p/\mathfrak{G}_p^\circ,\overline{\mathbf{Z}}_\ell)$ is cyclic over $\mathcal{H}_{\mathfrak{G}_p}^\circ(\overline{\mathbf{Z}}_\ell)$. Hence $C_{\mathfrak{H}_p\backslash c}^{\infty,\mathrm{sph}}(\overline{\mathbf{F}}_\ell)$ is cyclic over $\mathcal{H}_{\mathfrak{G}_p}^\circ(\overline{\mathbf{F}}_\ell)$. Now, by our assumption on $\ell\nmid\left( p\cdot \#H_Z(\mathbf{F}_p)\right)$ and \Cref{lemmap}, the Hecke equivariant morphism $i$ is non-zero and hence still surjective onto $\overline{\mathbf{F}}_\ell[\mathfrak{G}_p/\mathfrak{G}_p^\circ]_{\mathfrak{H}_p}$. Thus the latter is still cyclic over $\mathcal{H}_{\mathfrak{G}_p}^\circ(\overline{\mathbf{F}}_\ell)$. The proof then proceeds in the exact same manner as before.
         \end{proof}
\begin{rem} In the complex case, one has equality of dimension if and only if $\omega_{\pi_p}=\chi_p|_{\mathbf{Q}_p^\times}$. We will go over this in more detail in \Cref{seczeta} where we will explicitly write down non-zero elements of $\Hom_{\mathfrak{H}_p}(\pi_p\boxtimes \chi_p,\mathbf{C})$. Similar statements can be made in the $\ell$-modular case, but we will not be concerned with this from now on. In the complex setting, it is also well known that the unramified irreducible principal series and the unramified twists of $\mathrm{n}\text{-}\mathrm{Ind}_{B_p}^{G_p}\delta_B^{1/2}$ exhaust all isomorphism classes of unramified non-degenerate representations of $G_p$. In the modular setting, it is still true that the irreducible unramified principal series are non-degenerate. This is part of Theorem 
 $0.1$ in \cite{minguez:hal-01176198} but for $\GL_2$ it has been known before that as stated in \cite{barthel1995modular}. For reducible principal series in the modular setting, it is a bit more delicate. See for example \cite{vigneras1989representations}.
 \end{rem}

 \subsection{Universal Hecke operators}\label{sec unv hecke opers}
  Given any smooth $\mathfrak{G}_p$-representation $V$, and any $(\mathfrak{H}_p\times \mathfrak{G}_p)$-equivariant map $\zeta$ from $\mathcal{H}(\mathfrak{G}_p)$ to $V$, recall the induced $\mathfrak{G}_p$-equivariant map $\mathfrak{z}:C_c^\infty(\mathfrak{H}_p\backslash\mathfrak{G}_p)\rightarrow V$ as in \Cref{sec inv to coinv}. Also note that we have a trace map $$\mathrm{Tr}:=\mathrm{Tr}^{\mathfrak{G}_p^\circ[p]}_{\mathfrak{G}_p^\circ}:C_c^\infty(\mathfrak{H}_p\backslash\mathfrak{G}_p/\mathfrak{G}_p^\circ[p])\rightarrow C_{\mathfrak{H}_p\backslash c}^{\infty,\mathrm{sph}},\ \xi_1\mapsto\sum_{k\in\mathfrak{G}_p^\circ/\mathfrak{G}_p^\circ[p]}k\cdot\xi_1.$$

\begin{defn}\label{def unv hecke ops}
    Let $\xi$ be an element of $C_{\mathfrak{H}_p\backslash c}^{\infty,\mathrm{sph}}$. The universal Hecke operator attached to $\xi$, which we denote by $\mathcal{P}_\xi^\mathrm{unv}$, is the unique element of $\mathcal{R}^{\mathfrak{G}_p}$ that satisfies $\mathcal{P}_\xi^\mathrm{unv}*\ch(\mathfrak{H}_p\mathfrak{G}_p^\circ)=\xi$.
\end{defn}

\begin{thm}\label{thmL1}
\begin{enumerate}
\item Let $\xi\in \mathcal{L}^{H_p}.$ Then 
$$\mathfrak{z}(\xi)=\mathcal{P}_\xi^\mathrm{unv}\cdot\mathfrak{z}(\ch(\mathfrak{H}_p\mathfrak{G}_p^\circ))\ \ \mathrm{with}\ \ \mathcal{P}_\xi^\mathrm{unv}\in\mathcal{R}_{\mathbf{Z}[1/p]}^{\mathfrak{G}_p}.$$
   \item Let $\xi_1\in C_c^\infty(\mathfrak{H}_p\backslash\mathfrak{G}_p/\mathfrak{G}_p^\circ[p],\mathbf{Z}[1/p])$. Then $\mathrm{Tr}(\xi_1)\in\mathcal{L}_1^{H_p}$ and 
    $$\mathrm{norm}^{\mathfrak{G}_p^\circ[p]}_{\mathfrak{G}_p^\circ}\mathfrak{z}(\xi_1)=\mathcal{P}_{\mathrm{Tr}(\xi_1)}^\mathrm{unv}\cdot \mathfrak{z}(\ch(\mathfrak{H}_p\mathfrak{G}_p^\circ)).$$
    \end{enumerate}
    \begin{proof} The equality in the first part follows abstractly from \Cref{def unv hecke ops}, the $\mathcal{R}^{\mathfrak{G}_p}$-equivariance of the isomorphism $C_{\mathfrak{H}_p\backslash c}^{\infty,\mathrm{sph}}\simeq \mathbf{C}[\mathfrak{G}_p/\mathfrak{G}_p^\circ]_{\mathfrak{H}_p}$ and \Cref{equiv}. The fact that $\mathcal{P}_\xi^\mathrm{unv}\in\mathcal{R}_{\mathbf{Z}[1/p]}^{\mathfrak{G}_p}$ for $\xi\in\mathcal{L}^{H_p}$ is \Cref{thmCunivfree}. For the second part,
        to see that $\mathrm{Tr}(\xi_1)\in \mathcal{L}_1^{H_p}$ it suffices to check that it takes values in $\# H_Z(\mathbf{F}_p)\mathbf{Z}[1/p]$ on $H_p\times \{1\}.$ Indeed, for $h\in H_p$, we have $\mathrm{Tr}(\xi_1)((h,1))=\sum_{k\in\mathfrak{G}_p^\circ/\mathfrak{G}_p^\circ[p]}\xi_1((h,1)k)=[H_p:H_p[p]]\xi_1((h,1))$ and the claim follows by \Cref{volume of level subgroup}. For the norm-relations, we note that by the $\mathfrak{G}_p$-equivariance of $\mathfrak{z}$, $\mathrm{norm}^{\mathfrak{G}_p^\circ[p]}_{\mathfrak{G}_p^\circ}\mathfrak{z}(\xi_1)=\mathfrak{z}(\mathrm{Tr}(\xi_1))$ and this is the same as $\mathfrak{z}(\mathcal{P}_{\mathrm{Tr}(\xi_1)}^\mathrm{unv}\cdot\ch(\mathfrak{H}_p\mathfrak{G}_p^\circ))$ by \Cref{def unv hecke ops}. The result once again follows by equivariance and \Cref{equiv}.
    \end{proof}
\end{thm}
 Thus, to prove \Cref{thmintro1} it remainds to show that the universal Hecke operators attached to integral test data in the lattice $\mathcal{L}_1^{H_p}$, generate the ideal
$$\mathfrak{h}^{H_p}:=\begin{dcases*}
\left\langle\#H_Z(\mathbf{F}_p),\mathcal{T}_p\right\rangle\ ,\ H\text{ non-split}\\
\left\langle\#H_Z(\mathbf{F}_p),{\mathcal{P}_p^{H}}'(p^{-1/2})\right\rangle\ ,\ H\text{split}
\end{dcases*}$$
of $\mathcal{R}_{\mathbf{Z}[1/p]}^{\mathfrak{G}_p}$. See \Cref{seczeta} for the definition of $\mathcal{P}_p^{H}$. 
In the non-split case, we can show this using a technical lemma regarding the behavior of certain double coset operators in the Hecke algebra of $G_p$. In the split case, this is not sufficient and the most natural approach to this is through studying Shintani functions, and constructing a so-called universal Shintani function, which will carry information about the universal Hecke operator $\mathcal{P}_\xi^{\mathrm{unv}}$.

\subsubsection{Modules for a fixed character}\label{sec modules for a fixed character} Given a smooth $G_p$ representation $V$ and an unramified character of $H_p$, one can define the notion of $(H_p\times G_p;\chi_p)$-equivariant maps $\mathcal{H}(G_p)\rightarrow V$. These are equivariant maps where $G_p$ acts on the left through right translations and on the right by its assigned action on $V$, and $H_p$ acts on the left through inverse left translations and on the right by $\chi_p$. It is clear that, any such map factors through $$\mathbf{C}[G_p/G_p^\circ]_{H_p,\chi_p}:=\mathbf{C}[G_p/G_p^\circ]/\langle h\cdot f-\chi_p(h)f\ |\ h\in H_p, f\in\mathbf{C}[G_p/G_p^\circ]\rangle.$$This is naturally a module over $\mathcal{R}^{\mathfrak{G}_p}$ via $(\phi\otimes f)\cdot [\xi]:=\int_{G_p}\int_{H_p}\phi(g)f(h)(h^{-1},g)\cdot [\xi]\ dg\ dh$. By \Cref{cor Co and I free}, we can now define a canonical Hecke equivariant map of $\mathcal{R}^{\mathfrak{G}_p}$-modules 
$$\mathrm{Pr}_{\chi_p}^{H_p,\mathrm{unv}}:\mathbf{C}[\mathfrak{G}_p/\mathfrak{G}_p^\circ]_{\mathfrak{H}_p}\longrightarrow\mathbf{C}[G_p/G_p^\circ]_{H_p,\chi_p}\ ,\ [\ch(\mathfrak{G}_p^\circ)]\mapsto [\ch(G_p^\circ)].$$
It is a natural question to ask whether the module of $\chi_p$-twisted coinvariants is itself free over some quotient of $\mathcal{R}^{\mathfrak{G}_p}$. It is clear by construction that $\mathbf{C}[G_p/G_p^\circ]_{H_p,\chi_p}$ is naturally a module over the quotient 
$$\mathcal{R}_{\chi_p}^{\mathfrak{G}_p}:=\begin{cases}
    \mathcal{R}^{\mathfrak{G}_p}/\left(\mathcal{S}_p-\chi_p(p)^{-1}\right)\ ,\ H\text{ non-split}\\
    \mathcal{R}^{\mathfrak{G}_p}/\left(A_p-\chi_p\left(\left[\begin{smallmatrix}
        p & \\
        & 1
    \end{smallmatrix}\right]\right)^{-1},B_p-\chi_p\left(\left[\begin{smallmatrix}
        1 & \\
        & p
    \end{smallmatrix}\right]\right)^{-1}\right)\ ,\ H\text{ split}.
\end{cases}$$

\noindent
\begin{lem}\label{lemchicyc}
    The Hecke equivariant map $\mathrm{Pr}_{\chi_p}^{H_p,\mathrm{unv}}$ is surjective.
    \begin{proof}
        The image of $\mathrm{Pr}_{\chi_p}^{H_p,\mathrm{unv}}$ is completely determined from the submodule traced out by the elements $\phi_{s(\lambda)^{-1}}\cdot  [\ch(G_p^\circ)]$, where recall that $\phi_{s(\lambda)^{-1}}$ denotes the characteristic function of the double coset $G_p^\circ s(\lambda)^{-1}G_p^\circ$. It is once again clear by construction that $\phi_{s(\lambda)^{-1}}\cdot  [\ch(G_p^\circ)]=[\phi_{s(\lambda)}]$ in $\mathbf{C}[G_p/G_p^\circ]_{H_p,\chi_p}$. Looking at the proof of \Cref{thmCunivfree}, if $H$ is non-split, then practically nothing changes in $(1)$, by our assumption that $\chi_p$ is unramified. Thus, if $H$ is non-split, we have 
        $$\phi_{s(\lambda)^{-1}}\cdot [\ch(G_p^\circ)]=\#H_Z(\mathbf{Z}/p^\lambda\mathbf{Z})[\ch(s(\lambda) G_p^\circ)]$$
        in $\mathbf{C}[G_p/G_p^\circ]_{H_p,\chi_p}$. Thus, from \Cref{Ds}, this implies surjectivity of $\mathrm{Pr}_{\chi_p}^{H_p,\mathrm{unv}}$. If $H$ is split, then looking at $(2)$ in the proof of \Cref{thmCunivfree} we can deduce that $\phi_{s(\lambda)^{-1}}\cdot  [\ch(G_p^\circ)]$ is given by
        $$\varphi(p^\lambda)[\ch(n_0s(\lambda) G_p^\circ)]+\sum_{0\leq i<\lambda}\varphi(p^i)\left(\chi_p\left(\left[\begin{smallmatrix}
            1 & \\
             & p^{i-\lambda}
        \end{smallmatrix}\right]\right)+\chi_p\left(\left[\begin{smallmatrix}
            p^{i-\lambda} & \\
             & 1
        \end{smallmatrix}\right]\right)\right)[\ch(n_0s(i)G_p^\circ)]$$
        in $\mathbf{C}[G_p/G_p^\circ]_{H_p,\chi_p}$. From this, using \Cref{Ds} and induction on $\lambda\in\mathbf{Z}_{\geq 0}$ shows surjectivity of $\mathrm{Pr}_{\chi_p}^{H_p,\mathrm{unv}}$ in the split case.
    \end{proof}
\end{lem}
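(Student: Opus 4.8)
The plan is to recast surjectivity of $\mathrm{Pr}_{\chi_p}^{H_p,\mathrm{unv}}$ as a cyclicity statement for the target, and then feed it the same coset combinatorics that underlies \Cref{thmCunivfree}. By \Cref{cor Co and I free} the source $\mathbf{k}[\mathfrak{G}_p/\mathfrak{G}_p^\circ]_{\mathfrak{H}_p}$ is cyclic over $\mathcal{R}^{H_p}$, generated by $[\ch(\mathfrak{G}_p^\circ)]$; since $\mathrm{Pr}_{\chi_p}^{H_p,\mathrm{unv}}$ is $\mathcal{R}^{H_p}$-equivariant and sends $[\ch(\mathfrak{G}_p^\circ)]\mapsto[\ch(G_p^\circ)]$, its image is exactly the submodule $\mathcal{R}^{H_p}\cdot[\ch(G_p^\circ)]\subseteq\mathbf{k}[G_p/G_p^\circ]_{H_p,\chi_p}$. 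So it suffices to show that $[\ch(G_p^\circ)]$ generates $\mathbf{k}[G_p/G_p^\circ]_{H_p,\chi_p}$ over $\mathcal{R}^{H_p}$.

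First I would produce a convenient spanning set of $\mathbf{k}[G_p/G_p^\circ]_{H_p,\chi_p}$. Pushing the disjoint decompositions of \Cref{Dns} and \Cref{Ds} forward along $\mathfrak{G}_p\to G_p$ gives $G_p=\bigcup_{\lambda\geq 0}H_p s(\lambda)G_p^\circ$ when $H$ is non-split and $G_p=\bigcup_{\lambda\geq 0}H_p n_0 s(\lambda)G_p^\circ$ when $H$ is split. Since $\chi_p$ is unramified it is trivial on every compact subgroup $H_p\cap gG_p^\circ g^{-1}\subseteq H_p^\circ$, so each class $[\ch(s(\lambda)G_p^\circ)]$ (resp.\ $[\ch(n_0 s(\lambda)G_p^\circ)]$) is a well-defined nonzero vector, and writing any $gG_p^\circ$ as $h s(\mu)G_p^\circ$ (resp.\ $h n_0 s(\mu)G_p^\circ$) with $h\in H_p$ shows that these classes, as $\lambda$ runs over $\mathbf{Z}_{\geq 0}$, span $\mathbf{k}[G_p/G_p^\circ]_{H_p,\chi_p}$. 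Thus it is enough to place each of them inside $\mathcal{R}^{H_p}\cdot[\ch(G_p^\circ)]$.

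The engine is the identity $\phi_{s(\lambda)^{-1}}\cdot[\ch(G_p^\circ)]=[\ch(G_p^\circ s(\lambda)G_p^\circ)]$, where $\phi_{s(\lambda)^{-1}}=\ch(G_p^\circ s(\lambda)^{-1}G_p^\circ)\in\mathcal{H}_{G_p}^\circ\subseteq\mathcal{R}^{H_p}$. I would then decompose $G_p^\circ s(\lambda)G_p^\circ$ into left $G_p^\circ$-cosets exactly as in the proof of \Cref{thmCunivfree} and transport each such coset into standard form on the left by $H_p$, reusing the congruences (non-split case) and the identities $(2)$ (split case) from that proof; unramifiedness of $\chi_p$ kills the $H_p^\circ$-part of every transporting element, so only its diagonal $p$-power part contributes a scalar $\chi_p(\cdot)$. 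In the non-split case every coset lands on $H_p s(\lambda)G_p^\circ$, so $\phi_{s(\lambda)^{-1}}\cdot[\ch(G_p^\circ)]=\#H_Z(\mathbf{Z}/p^\lambda\mathbf{Z})\,[\ch(s(\lambda)G_p^\circ)]$; the coefficient is invertible in $\mathbf{k}$ (characteristic zero), so $[\ch(s(\lambda)G_p^\circ)]\in\mathcal{R}^{H_p}\cdot[\ch(G_p^\circ)]$ and \Cref{Dns} finishes that case. In the split case the cosets distribute over the levels $0\leq i\leq\lambda$, yielding
\[
\phi_{s(\lambda)^{-1}}\cdot[\ch(G_p^\circ)]=\varphi(p^\lambda)[\ch(n_0 s(\lambda)G_p^\circ)]+\sum_{0\leq i<\lambda}\varphi(p^i)\left(\chi_p\left(\left[\begin{smallmatrix}1 & \\ & p^{i-\lambda}\end{smallmatrix}\right]\right)+\chi_p\left(\left[\begin{smallmatrix}p^{i-\lambda} & \\ & 1\end{smallmatrix}\right]\right)\right)[\ch(n_0 s(i)G_p^\circ)]
\]
with leading coefficient $\varphi(p^\lambda)\neq 0$. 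Induction on $\lambda$ then closes the argument: the base case $\lambda=0$ is $[\ch(n_0 G_p^\circ)]=[\ch(G_p^\circ)]$, and the inductive step solves the displayed relation for $[\ch(n_0 s(\lambda)G_p^\circ)]$ using that the lower classes $[\ch(n_0 s(i)G_p^\circ)]$, $i<\lambda$, already lie in $\mathcal{R}^{H_p}\cdot[\ch(G_p^\circ)]$.

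I expect the main obstacle to be the split-case bookkeeping: matching each left $G_p^\circ$-coset of $G_p^\circ s(\lambda)G_p^\circ$ with the $H_p$-orbit it falls into and reading off the attached twist $\chi_p(\cdot)$. This orbit combinatorics is, however, exactly what was carried out in the proof of \Cref{thmCunivfree} (the identities $(2)$ and the discussion around them), so the only genuinely new work is to re-run it while tracking the scalar coming from $\chi_p$, and to note that the leading multiplicities $\#H_Z(\mathbf{Z}/p^\lambda\mathbf{Z})$ and $\varphi(p^\lambda)$ are units in $\mathbf{k}$ because $\mathbf{k}$ has characteristic zero.
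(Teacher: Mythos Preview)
Your proposal is correct and follows essentially the same approach as the paper: reduce to cyclicity of the target over $\mathcal{R}^{H_p}$, produce the spanning set from the decompositions of \Cref{Dns} and \Cref{Ds}, compute $\phi_{s(\lambda)^{-1}}\cdot[\ch(G_p^\circ)]$ by recycling the coset manipulations from the proof of \Cref{thmCunivfree}, and finish by invertibility of the leading coefficient (non-split) or induction on $\lambda$ (split). If anything, you are slightly more explicit than the paper in justifying why the classes $[\ch(s(\lambda)G_p^\circ)]$ (resp.\ $[\ch(n_0 s(\lambda)G_p^\circ)]$) span the target and why the leading multiplicities are units in $\mathbf{k}$.
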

\begin{rem}
    It is not hard to see that the map $\mathrm{Pr}_{\chi_p}^{H_p,\mathrm{unv}}$ is given explicitly by $$\mathrm{Pr}_{\chi_p}^{H_p,\mathrm{unv}}\left([\ch((g,h)\mathfrak{G}_p^\circ)]\right)=\chi_p(h)[\ch(gG_p^\circ)].$$
    This follows from the proof of \Cref{lemchicyc} and the fact that $\ch(h H_p^\circ)*[\ch(\mathfrak{G}_p^\circ)]=[\ch((1,h^{-1})\mathfrak{G}_p^\circ)].$
\end{rem}
\begin{cor}\label{cor mod of fixed char is free}
    The module $\mathbf{C}[G_p/G_p^\circ]_{H_p,\chi_p}$ is free of rank one over $\mathcal{R}_{\chi_p}^{\mathfrak{G}_p}$, generated by $[\ch(G_p^\circ)]$.
    \begin{proof}
        By \Cref{cor Co and I free} it is clear that $\mathbf{C}[G_p/G_p^\circ]_{H_p,\chi_p}$ is cyclic over $\mathcal{R}_{\chi_p}^{\mathfrak{G}_p}$ generated by $[\ch(G_p^\circ)]$. To show freeness, one argues in the same way as \Cref{thmCunivfree}.
    \end{proof}
\end{cor}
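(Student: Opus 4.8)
The plan is to deduce cyclicity from \Cref{corcyclic} and \Cref{lemchicyc}, and then to imitate the argument of \Cref{prop inj} for freeness. For cyclicity: by \Cref{lemchicyc} the Hecke-equivariant map $\mathrm{Pr}_{\chi_p}^{H_p,\mathrm{unv}}$ is surjective, and by \Cref{corcyclic} its source $\mathbf{k}[\mathfrak{G}_p/\mathfrak{G}_p^\circ]_{\mathfrak{H}_p}$ is cyclic over $\mathcal{R}^{H_p}$ generated by $[\ch(\mathfrak{G}_p^\circ)]$, which maps to $[\ch(G_p^\circ)]$; since the $\mathcal{R}^{H_p}$-action on $\mathbf{k}[G_p/G_p^\circ]_{H_p,\chi_p}$ visibly factors through $\mathcal{R}_{\chi_p}^{H_p}$, we get a surjection of $\mathcal{R}_{\chi_p}^{H_p}$-modules $P\mapsto P*[\ch(G_p^\circ)]$. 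Freeness of rank one is equivalent to its injectivity, and because $\mathcal{R}_{\chi_p}^{H_p}$ is a polynomial ring in $\mathcal{T}_p$ over $\mathbf{k}$ in both cases (the classes of $\mathcal{S}_p$, resp.\ of $A_p$ and $B_p$, becoming the nonzero scalars prescribed by $\chi_p$), it suffices to show that the classes $\mathcal{T}_p^\lambda*[\ch(G_p^\circ)]$, $\lambda\geq 0$, are $\mathbf{k}$-linearly independent.

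To this end I would first record, exactly as in \Cref{prop inj}, the effect of the defining relations of the twisted coinvariants on the natural basis $\ch(gG_p^\circ)$ of $\mathbf{k}[G_p/G_p^\circ]$. The relation elements $f-\chi_p(h)h\cdot f$ preserve the decomposition of $\mathbf{k}[G_p/G_p^\circ]$ into spans of the $H_p$-orbits on $G_p/G_p^\circ$, which by \Cref{Dns} (resp.\ \Cref{Ds}), after projecting away the $H_p$-factor, are indexed by $\lambda\in\mathbf{Z}_{\geq 0}$ with representatives $s(\lambda)$ (resp.\ $n_0s(\lambda)$). Since $H_p$ is abelian, the stabiliser of a coset is constant along its orbit and, via the conjugation identities used in \Cref{lemlattices}, is an open subgroup of $H_p^\circ$ on which the unramified character $\chi_p$ is trivial; hence the $\chi_p$-coinvariants of each orbit span are one-dimensional, and the images $[\ch(s(\lambda)G_p^\circ)]$ (resp.\ $[\ch(n_0s(\lambda)G_p^\circ)]$), $\lambda\geq 0$, form a $\mathbf{k}$-basis of $\mathbf{k}[G_p/G_p^\circ]_{H_p,\chi_p}$.

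Finally I would compute $\mathcal{T}_p^\lambda*[\ch(G_p^\circ)]$ against this basis. Expanding $\mathcal{T}_p^\lambda$ in the spherical Hecke algebra of $G_p$ as $\phi_{s(\lambda)^{-1}}$ plus a combination of the $\phi_{s(j)^{-1}}$ with $j<\lambda$ and powers of $\mathcal{S}_p$ (the standard recursion for $\GL_2$; under $*$ the central $\mathcal{S}_p$ acts as a nonzero scalar), and inserting the formulas for $\phi_{s(\lambda)^{-1}}*[\ch(G_p^\circ)]$ from the proof of \Cref{lemchicyc}, one obtains
$$\mathcal{T}_p^\lambda*[\ch(G_p^\circ)]=c_\lambda\,[\ch(s(\lambda)G_p^\circ)]+(\text{combination of }[\ch(s(j)G_p^\circ)],\ j<\lambda)$$
in the non-split case, with $c_\lambda$ a nonzero scalar multiple of $\#H_Z(\mathbf{Z}/p^\lambda\mathbf{Z})$, and likewise in the split case with leading coefficient a nonzero multiple of $\varphi(p^\lambda)$. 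As $\mathrm{char}\,\mathbf{k}=0$ these leading coefficients are nonzero, so the family $\{\mathcal{T}_p^\lambda*[\ch(G_p^\circ)]\}_\lambda$ is unitriangular with respect to the basis of the previous paragraph, hence linearly independent; this gives injectivity and thus freeness of rank one over $\mathcal{R}_{\chi_p}^{H_p}$. The only real work is the orbitwise analysis of the twisted coinvariants and checking that the leading coefficients are exactly $\#H_Z(\mathbf{Z}/p^\lambda\mathbf{Z})$ and $\varphi(p^\lambda)$ and do not vanish in $\mathbf{k}$ — both are direct adaptations of \Cref{prop inj} and \Cref{lemchicyc}, so I expect no genuine obstacle, only bookkeeping.
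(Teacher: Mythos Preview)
Your proposal is correct and is precisely the natural unpacking of the paper's two-line sketch: cyclicity via the surjection of \Cref{lemchicyc} (combined with \Cref{corcyclic}), and freeness by the orbit-by-orbit analysis of \Cref{prop inj}, adapted to the $\chi_p$-twisted relations. The one genuinely new ingredient you correctly isolate---that the stabilisers $H_p\cap gG_p^\circ g^{-1}$ lie in $H_p^\circ$ (via the conjugation identities in \Cref{lemlattices}) so the unramified $\chi_p$ is trivial on them, whence each orbit contributes exactly a line to the twisted coinvariants---is exactly the point that distinguishes the twisted case from \Cref{prop inj} and is what the paper is implicitly asking the reader to supply.
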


\subsubsection{Universal property}\label{sec univ prop} From \Cref{cor mod of fixed char is free} we know that $\mathbf{C}[G_p/G_p^\circ]_{H_p,\chi_p}$ is free of rank one over $\mathcal{R}_{\chi_p}^{\mathfrak{G}_p}$. For an element $[\mu]\in \mathbf{C}[G_p/G_p^\circ]_{H_p,\chi_p}$, we write $\mathcal{P}_{[\mu],\chi_p}$ for the unique element of $\mathcal{R}_{\chi_p}^{\mathfrak{G}_p}$ such that  $[\mu]$ is equal to $\mathcal{P}_{[\mu],\chi_p}\cdot [\ch(G_p^\circ)].$ Finally, we write $\mathrm{pr}_{\chi_p}$ for the projection $\mathcal{R}^{\mathfrak{G}_p}\rightarrow\mathcal{R}_{\chi_p}^{\mathfrak{G}_p}$.\\
\\ Let $\xi\in C_{\mathfrak{H}_p\backslash c}^{\infty,\mathrm{sph}}$. The Hecke operator $\mathcal{P}_\xi^\mathrm{unv}$ satisfies the following universal property:  
For every unramified $\mathbf{C}$-linear character $\chi_p$ of $H_p$, we have
$$\mathrm{pr}_{\chi_p}(\mathcal{P}_\xi^\mathrm{unv})=\mathcal{P}_{[\mu_\xi],\chi_p}$$
where $[\mu_\xi]:=\mathrm{Pr}_{\chi_p}^{H_p,\mathrm{unv}}(i(\xi))$, and this uniquely determines the Hecke operator $\mathcal{P}_\xi^\mathrm{unv}$.\\
\subsubsection{Specialisation at an unramified character} Let $V$ be a smooth $G_p$-representation, $\chi_p$ an unramified character of $H_p$ and $\zeta_{\chi_p}$ a $(H_p\times G_p;\chi_p)$-equivariant map. We can regard $V$ as a smooth $\mathfrak{G}_p$-representation where the $H_p$ factor acts through $\chi_p$. One can check that the map
    \begin{align}\label{eq:specialisation}
    \mathcal{H}(\mathfrak{G}_p)\rightarrow V\ ,\ \phi\otimes f\mapsto f\cdot \zeta_{\chi_p}\left(\phi\right)
    \end{align}
    is $(\mathfrak{H}_p\times \mathfrak{G}_p)$-equivariant and thus one can apply the result of \Cref{sec unv hecke opers}. This is the setup that will usually occur in practice, with $H_p$ acting through an unramified character. Using the extended group $\mathfrak{G}$ and the theory developed in that setting, we have constructed a space of test data that is independent of $\chi_p$, and gives rise to Hecke operators that are also independent of $\chi_p$, but still control the image of test data under \eqref{eq:specialisation}, upon specialising at any fixed character $\chi_p$.

\section{Zeta integrals}\label{seczeta}

Before introducing zeta integrals, we provide some necessary background. We fix once and for all an additive character $\psi:\mathbf{Q}_p\rightarrow\mathbf{C}^\times$ of conductor $(1)$. Let $\sigma_{p,1}$ and $\sigma_{p,2}$ be two characters of $\mathbf{Q}_p^\times$. From now on,  we will write $\mathcal{B}(\sigma_{p,1},\sigma_{p,2})$ for the principal-series representation given by the normalised induction $\mathrm{n}\text{-}\mathrm{Ind}_{B_p}^{G_p}\sigma_p$. Here $B_p$ is the upper triangular Borel, and the character $\left[\begin{smallmatrix}
    \sigma_{p,1} & \\
     & \sigma_{p,2}
\end{smallmatrix}\right]$ is extended to a character $\sigma_p$ of $B_p$ in the usual way, by acting trivially on the unipotent radical $N_p\subseteq B_p$. It is a well known fact that such a representation is irreducible if and only if $\sigma_{p,1}\sigma_{p,2}^{-1}\neq |\cdot |_p^{\pm 1}$ (\cite{bump_1997} Section $4.5$). It is clearly unramified if and only if $\sigma_{p,1}$ and $\sigma_{p,2}$ are unramified, and in this case we have a canonical spherical vector given by $W_{\pi_p}^\mathrm{sph}(g)=\delta_{B}^{1/2}(b)\sigma_p(b)$ for $g=bk\in B_pG_p^\circ$. Here $\delta_B$ denotes the modular character of the non-unimodular group $B_p$. It is given on the diagonal torus by $\delta_B(t)=|\det(\mathrm{Ad}(t)|_\mathfrak{n})|_p$ where $\mathrm{Ad}$ denotes the adjoint action of $t\in T_p$ on the Lie algebra of upper triangular matrices. The action is then extended to $B_p$ in the usual way. \\
\\
\noindent Given an unramified principal-series $\pi_p=\mathcal{B}(\sigma_{p,1},\sigma_{p,2})$, we will denote its Satake parameters by $\alpha^{\pi_p}:=\sigma_{p,1}(p)$ and $\beta^{\pi_p}:=\sigma_{p,2}(p)$. We write $\Theta_{\pi_p}$ for the spherical Hecke eigensystem of $\pi_p$. It is well known that $\Theta_{\pi_p}(\mathcal{S}_p)=\alpha^{\pi_p}\beta^{\pi_p}$ and $\Theta_{\pi_p}(\mathcal{T}_p)=p^{1/2}(\alpha^{\pi_p}+\beta^{\pi_p})$ (\cite{bump_1997} Proposition $4.6.6$). We denote the $L$-factor attached to the representation $\pi_p$ by $L(\pi_p,s)$, which is given by the rational function 
$$L(\pi_p,s):=(1-\alpha^{\pi_p}p^{-s})^{-1}(1-\beta^{\pi_p}p^{-s})^{-1}.$$
We will be mainly considering unramified $\mathfrak{G}_p$-representations of the form $\pi_p\boxtimes \chi_p$ where $\pi_p$ is an unramified principal-series of $G_p$ and $\chi_p$ is an unramified character of $H_p$. For such a representation, we will denote its normalized spherical vector by $W_{\pi_p}^\mathrm{sph}\otimes 1$. We write $\Theta_{\pi_p\boxtimes \chi_p}$ for its spherical Hecke eigensystem which factors through $\mathcal{R}^{\mathfrak{G}_p}$ whenever $\omega_{\pi_p}=\chi_p|_{\mathbf{Q}_p^\times}$. We also attach the following $L$-factor to $\pi_p\boxtimes \chi_p$, given by 
\begin{align*}
L^{H_p}(\pi_p\boxtimes\chi_p,s):=\begin{cases}
    L(\pi_p,s)\ ,\ H\text{ non-split}\\
   (1-\alpha^{\pi_p}\chi_{p}(\left[\begin{smallmatrix}
       p & \\
        & 1
   \end{smallmatrix}\right])^{-1}p^{-s})^{-1}(1-\beta^{\pi_p}\chi_{p}(\left[\begin{smallmatrix}
       p & \\
        & 1
   \end{smallmatrix}\right])^{-1}p^{-s})^{-1}\ ,\ H\text{ split}.
\end{cases}
\end{align*}

\begin{lem}\label{lem euler factor}
    Let $\pi_p$ be an unramified principal-series of $G_p$ and $\chi_p$ an unramified character of $H_p$ with $\omega_{\pi_p}=\chi_p|_{\mathbf{Q}_p^\times}$. There exists a degree $2$ polynomial $\mathcal{P}_p^{H}$ in $\mathcal{R}^{\mathfrak{G}_p}[X]$ for which we have
    $$\Theta_{\pi_p\boxtimes \chi_p}(\mathcal{P}_p^{H})(p^{-s})=L^{H_p}(\pi_p\boxtimes \chi_p,s)^{-1}.$$
    \begin{proof}
        The polynomial $\mathcal{P}_p^{H}$ is explicitly given by 
        $$\mathcal{P}_p^{H}(X):=\begin{cases}
            1-p^{-1/2} \mathcal{T}_pX+\mathcal{S}_pX^2\ ,\ H\text{ non-split}\\
            1-p^{-1/2}\mathcal{T}_pA_p^{-1} X+\mathcal{S}_pA_p^{-2}X^2\ ,\ H\text{ split}.
        \end{cases}$$
        The result follows from a direct computation.
    \end{proof}
    \end{lem}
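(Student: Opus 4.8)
The plan is to exhibit $\mathcal{P}_p^{H,\mathrm{unv}}$ explicitly and then to verify the claimed identity by evaluating the spherical Hecke eigensystem $\Theta_{\pi_p\boxtimes\chi_p}$ on its coefficients. First I would recall, from the Satake computations quoted above, that $\Theta_{\pi_p\boxtimes\chi_p}(\mathcal{T}_p)=p^{1/2}(\alpha^{\pi_p}+\beta^{\pi_p})$ and $\Theta_{\pi_p\boxtimes\chi_p}(\mathcal{S}_p)=\alpha^{\pi_p}\beta^{\pi_p}$, and --- in the split case --- that $\Theta_{\pi_p\boxtimes\chi_p}(A_p)=\chi_p\!\left(\left[\begin{smallmatrix}p&\\&1\end{smallmatrix}\right]\right)$, so that $\Theta_{\pi_p\boxtimes\chi_p}(A_p^{-1})$ makes sense and equals the reciprocal of this value. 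One also checks that the hypothesis $\omega_{\pi_p}=\chi_p|_{\mathbf{Q}_p^\times}$ is precisely what makes $\Theta_{\pi_p\boxtimes\chi_p}$ descend to $\mathcal{R}^{H_p}$, i.e.\ compatible with the relation $\mathcal{S}_p=X_p$ (resp.\ $\mathcal{S}_p=A_pB_p$), so that the statement is well posed.

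Then the verification is a one-line expansion after the substitution $X=p^{-s}$. In the non-split case $L^{H_p}(\pi_p\boxtimes\chi_p,s)^{-1}=(1-\alpha^{\pi_p}X)(1-\beta^{\pi_p}X)=1-(\alpha^{\pi_p}+\beta^{\pi_p})X+\alpha^{\pi_p}\beta^{\pi_p}X^2$, and applying $\Theta_{\pi_p\boxtimes\chi_p}$ to $\mathcal{P}_p^{H,\mathrm{unv}}(X)=1-p^{-1/2}\mathcal{T}_pX+\mathcal{S}_pX^2$ yields exactly this, since $p^{-1/2}\cdot p^{1/2}(\alpha^{\pi_p}+\beta^{\pi_p})=\alpha^{\pi_p}+\beta^{\pi_p}$. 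In the split case, writing $c:=\chi_p\!\left(\left[\begin{smallmatrix}p&\\&1\end{smallmatrix}\right]\right)$, one has $L^{H_p}(\pi_p\boxtimes\chi_p,s)^{-1}=1-(\alpha^{\pi_p}+\beta^{\pi_p})c^{-1}X+\alpha^{\pi_p}\beta^{\pi_p}c^{-2}X^2$, and the same substitution into $\mathcal{P}_p^{H,\mathrm{unv}}(X)=1-p^{-1/2}\mathcal{T}_pA_p^{-1}X+\mathcal{S}_pA_p^{-2}X^2$ recovers it, now using $\Theta_{\pi_p\boxtimes\chi_p}(\mathcal{T}_pA_p^{-1})=p^{1/2}(\alpha^{\pi_p}+\beta^{\pi_p})c^{-1}$ and $\Theta_{\pi_p\boxtimes\chi_p}(\mathcal{S}_pA_p^{-2})=\alpha^{\pi_p}\beta^{\pi_p}c^{-2}$.

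I do not expect any genuine obstacle: the content is purely bookkeeping with the Satake parameters, and the only point needing a moment's care is that the coefficients of $\mathcal{P}_p^{H,\mathrm{unv}}$ genuinely lie in $\mathcal{R}^{H_p}$ rather than in a localisation --- this is fine because $A_p$ is a unit in the spherical Hecke algebra of $H_p$, so $\mathcal{R}^{H_p}\cong\mathbf{k}[\mathcal{T}_p,\mathcal{S}_p^{\pm1},A_p^{\pm1}]$ already contains $\mathcal{T}_pA_p^{-1}$ and $\mathcal{S}_pA_p^{-2}$. Conceptually, $\mathcal{P}_p^{H,\mathrm{unv}}(X)$ is nothing but the degree-two ``Hecke polynomial'' $\det(1-X\cdot\mathrm{Frob})$ attached to the (suitably twisted) Satake parameter of $\pi_p\boxtimes\chi_p$, which is why its image under every unramified eigensystem is automatically the inverse local $L$-factor; one could organise the proof around this observation to treat both cases uniformly.
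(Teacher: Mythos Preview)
Your proposal is correct and takes essentially the same approach as the paper: exhibit the explicit polynomial and verify the identity by direct computation using the known values of $\Theta_{\pi_p\boxtimes\chi_p}$ on $\mathcal{T}_p$, $\mathcal{S}_p$, and (in the split case) $A_p$. The paper's proof simply writes down the same polynomial and declares ``the result follows from a direct computation''; you have spelled out that computation in full, together with the check that the coefficients genuinely lie in $\mathcal{R}^{H_p}$, which is slightly more than what the paper makes explicit but entirely in the same spirit.
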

 \begin{defn}\label{def whit}
     An admissible representation $\pi_p$ of $G_p$ is of Whittaker type if it is either irreducible and generic, or a reducible principal-series with a one-dimensional quotient.
 \end{defn}
 \noindent If $\pi_p$ is of Whittaker type, the it has a well-defined Whittaker model. As remarked in \cite{loeffler2021gross} the irreducible unramified principal-series together with the unramified twists of  $\mathrm{n}\text{-}\mathrm{Ind}_{B_p}^{G_p}\delta_B^{1/2}$, exhaust all isomorphism classes of unramified $G_p$-representations of Whittaker type.
 We will often identify such a principal-series with its Whittaker model. In that case we will denote its normalized spherical vector by $W_{\pi_p}^\mathrm{sph}$, which satisfies $W_{\pi_p}^\mathrm{sph}(1)=1.$ It will also be convenient notation-wise to identify an unramified character $\chi_p$ of $H_p$, with the space $\{z\cdot \chi_p:H_p\rightarrow \mathbf{C}\ |\ z\in\mathbf{C}\}$ where the $H_p$-action is given by right translations. By abuse of notation, we denote this space by $\mathcal{W}(\chi_p)$.
 \begin{defn}\label{whitzeta}
     Let $H$ be non-split. For an unramified $G_p$-representation $\pi_p$ of Whittaker type , and an unramified character $\chi_p$ of $H_p$ with $\omega_{\pi_p}=\chi_p|_{\mathbf{Q}_p^\times}$, we define the zeta integral $$\mathfrak{z}_p^{ns}:\mathcal{W}(\pi_p)\boxtimes \mathcal{W}(\chi_p)\longrightarrow \mathbf{C}\ ,\ \mathfrak{z}_p^{ns}(W_1\otimes W_2):=W_2(1)\int_{H_p^\circ} W_1(h)\ dh.$$
    
 \end{defn}
 
    \noindent The linear form defined above is just a finite sum of Whittaker functions. Furthermore, since  $H_p=Z_pH_p^\circ$, it is easy to see that the linear form $\mathfrak{z}_p^{ns}$, is in fact an element of $\Hom_{\mathfrak{H}_p}(\pi_p\boxtimes \chi_p,\mathbf{C})$, and it satisfies $\mathfrak{z}_p^{ns}(W_{\pi_p}^\mathrm{sph}\otimes 1)=1.$ The existence of this element, shows that $[W_{\pi_p}^\mathrm{sph}\otimes 1]$ is indeed non-zero in $(\pi_p\boxtimes \chi_p)_{\mathfrak{H}_p}$ for $H$ non-split.
 
 \begin{defn}
     Let $(\pi_p,\chi_p)$ be as in \emph{\Cref{whitzeta}}. We define the following zeta integral, similar to \cite{jacquet2006automorphic}.
     $$Z^{H_p}(W_1\otimes W_2,s):=\begin{dcases*}
         W_2(1)\int_{\mathbf{Q}_p^\times} W_1(\left[\begin{smallmatrix}
             x & \\
              & 1
         \end{smallmatrix}\right])|x|_p^{s-1/2}\ d^\times x\ ,\ H\text{ non-split}\\
         \int_{\mathbf{Q}_p^\times}W_1(\left[\begin{smallmatrix}
             x & \\
              & 1
         \end{smallmatrix}\right])W_2(\left[\begin{smallmatrix}
             x^{-1} & \\
             & 1
         \end{smallmatrix}\right])|x|_p^{s-1/2}\ d^\times x\ ,\ H\text{ split}.
     \end{dcases*}$$
     for $W_1\otimes W_2\in\mathcal{W}(\pi_p)\boxtimes \mathcal{W}(\chi_p)$, where $d^\times x$ is the unique Haar measure on $\mathbf{Q}_p^\times$ that gives $\mathbf{Z}_p^\times$ volume one. 
 \end{defn}
 \begin{prop}\label{propJPS} The integral $Z^{H_p}(W_1\otimes W_2,s)$ converges absolutely for $\mathrm{re}(s)\gg 0$ and has unique meromorphic continuation as a rational function of $p^s$. For varying $W_1\otimes W_2\in\mathcal{W}(\pi_p)\boxtimes \mathcal{W}(\chi_p)$, the elements $Z^{H_p}(W_1\otimes W_2,s)$ generate the fractional ideal $L^{H_p}(\pi_p\boxtimes \chi_p,s)\mathbf{C}[p^{ s},p^{-s}].$
     \begin{proof}
         This is a well-known result and a detailed proof in greater generality can be found in \cite{jacquet1983rankin} Section $2.7$.
     \end{proof}
 \end{prop}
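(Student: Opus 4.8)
The plan is to reduce both cases of $Z^{H_p}(W_1\otimes W_2,s)$ to a single classical object --- the local Hecke--Tate zeta integral for $\GL_2\times\GL_1$, possibly twisted by an unramified character --- and then to feed in the standard asymptotic theory of Whittaker functions. For an unramified quasi-character $\mu$ of $\mathbf{Q}_p^\times$ and $W\in\mathcal{W}(\pi_p)$ put
$$Z(W,\mu,s):=\int_{\mathbf{Q}_p^\times}W(\left[\begin{smallmatrix}x&\\&1\end{smallmatrix}\right])\,\mu(x)\,|x|_p^{s-1/2}\,d^\times x.$$
If $H$ is non-split and $W_2=z\cdot\chi_p$, then $Z^{H_p}(W_1\otimes W_2,s)=z\cdot Z(W_1,\mathbf{1},s)$; if $H$ is split, write $\chi_p=\chi_{p,1}\boxtimes\chi_{p,2}$, so that $W_2(\left[\begin{smallmatrix}x^{-1}&\\&1\end{smallmatrix}\right])=z\,\chi_{p,1}(x)^{-1}$ and hence $Z^{H_p}(W_1\otimes W_2,s)=z\cdot Z(W_1,\chi_{p,1}^{-1},s)$. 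In either case a direct comparison of definitions identifies the relevant $L$-factor as $L^{H_p}(\pi_p\boxtimes\chi_p,s)=\prod_{\gamma\in\{\alpha^{\pi_p},\beta^{\pi_p}\}}(1-\gamma\,\mu(p)\,p^{-s})^{-1}$ with $\mu=\mathbf{1}$ (resp. $\mu=\chi_{p,1}^{-1}$). So it is enough to prove that the $\mathbf{C}[p^s,p^{-s}]$-module spanned by $\{Z(W,\mu,s):W\in\mathcal{W}(\pi_p)\}$ equals $L^{H_p}(\pi_p\boxtimes\chi_p,s)\,\mathbf{C}[p^s,p^{-s}]$.

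For convergence, continuation, and the inclusion $\subseteq$, fix $W\in\mathcal{W}(\pi_p)$. Since $W$ is smooth and $\psi$ has conductor $(1)$, the identity $W(\left[\begin{smallmatrix}x&\\&1\end{smallmatrix}\right]\left[\begin{smallmatrix}1&u\\&1\end{smallmatrix}\right])=\psi(xu)\,W(\left[\begin{smallmatrix}x&\\&1\end{smallmatrix}\right])$ forces $x\mapsto W(\left[\begin{smallmatrix}x&\\&1\end{smallmatrix}\right])$ to vanish for $v_p(x)$ sufficiently negative; combined with right-invariance of $W$ under a compact open subgroup, this rewrites $Z(W,\mu,s)=\sum_{n\ge n_0}c_n\,\mu(p)^n\,p^{n/2}\,p^{-ns}$ as a series over $\mathbf{Z}$ bounded below, where $c_n$ is a finite average of Whittaker values on $\left[\begin{smallmatrix}p^n u&\\&1\end{smallmatrix}\right]$, $u\in\mathbf{Z}_p^\times$. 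The structural input I would quote is that for $n\gg0$ each $c_n$ is a $\mathbf{C}$-linear combination of $n^{j}(\alpha^{\pi_p}p^{-1/2})^n$ and $n^{j}(\beta^{\pi_p}p^{-1/2})^n$ (with $j=0$ unless $\alpha^{\pi_p}=\beta^{\pi_p}$, in which case also $j\le1$): this is the asymptotic expansion of Whittaker functions along the diagonal torus, governed by the finite-dimensional Jacquet module of $\pi_p$, and it holds uniformly for the irreducible unramified principal series and for the unramified twists of $\mathrm{n}\text{-}\mathrm{Ind}_{B_p}^{G_p}\delta_B^{1/2}$. It follows that $Z(W,\mu,s)$ converges absolutely for $\mathrm{re}(s)>\log_p\max(|\alpha^{\pi_p}\mu(p)|,|\beta^{\pi_p}\mu(p)|)$, equals there a Laurent polynomial in $p^{\pm s}$ plus a finite combination of series $\sum_{n\ge0}n^{j}(\gamma\mu(p)p^{-s})^n$ with $\gamma\in\{\alpha^{\pi_p},\beta^{\pi_p}\}$, and therefore has a unique meromorphic continuation to a rational function of $p^s$ all of whose poles occur among those of $L^{H_p}(\pi_p\boxtimes\chi_p,s)$ and of no larger order. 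Equivalently $Z(W,\mu,s)\,/\,L^{H_p}(\pi_p\boxtimes\chi_p,s)\in\mathbf{C}[p^s,p^{-s}]$, which gives one inclusion.

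For the reverse inclusion it suffices to realize $L^{H_p}(\pi_p\boxtimes\chi_p,s)$ itself. Take $W_1=W_{\pi_p}^{\mathrm{sph}}$ and $W_2=\chi_p$ (so $z=1$); the Casselman--Shalika formula gives
$$W_{\pi_p}^{\mathrm{sph}}(\left[\begin{smallmatrix}p^n&\\&1\end{smallmatrix}\right])=p^{-n/2}\sum_{i=0}^{n}(\alpha^{\pi_p})^{i}(\beta^{\pi_p})^{n-i}\qquad(n\ge0),$$
and it vanishes for $n<0$. Summing the resulting series by means of $\sum_{n\ge0}\left(\sum_{i=0}^n\alpha^{i}\beta^{n-i}\right)t^n=\left((1-\alpha t)(1-\beta t)\right)^{-1}$ with $t=\mu(p)p^{-s}$ yields $Z(W_{\pi_p}^{\mathrm{sph}},\mu,s)=\prod_{\gamma}(1-\gamma\mu(p)p^{-s})^{-1}=L^{H_p}(\pi_p\boxtimes\chi_p,s)$ --- equivalently $\Theta_{\pi_p\boxtimes\chi_p}(\mathcal{P}_p^{H,\mathrm{unv}})(p^{-s})^{-1}$, by the lemma above. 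Combining the two inclusions proves that the $Z^{H_p}(W_1\otimes W_2,s)$ generate $L^{H_p}(\pi_p\boxtimes\chi_p,s)\,\mathbf{C}[p^s,p^{-s}]$.

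The genuine obstacle is the structural input used in the second step in the \emph{reducible} Whittaker-type case: one must work with the Whittaker model of $\pi_p=\mathrm{n}\text{-}\mathrm{Ind}_{B_p}^{G_p}\sigma_p$ itself --- not that of its generic irreducible constituent --- verify that its restriction to the diagonal torus still has the claimed exponential--polynomial form with exponents controlled by $\{\alpha^{\pi_p}p^{-1/2},\beta^{\pi_p}p^{-1/2}\}$, and check that the $L$-factor so produced is exactly the ``full'' factor entering $L^{H_p}$. This is precisely the content of the local Rankin--Selberg theory of \cite{jacquet1983rankin}, Section~2.7 (and, in this rank-one situation, of Jacquet--Langlands), which is why it is quoted rather than reproved here.
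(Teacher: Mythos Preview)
Your proof is correct and is precisely the standard argument underlying the citation to \cite{jacquet1983rankin}: the paper's own proof is nothing more than that reference, and you have simply unpacked it in the present rank-one situation, including the spherical computation (which the paper records separately as \Cref{lemgro}). The one genuinely subtle point---controlling the torus asymptotics of the Whittaker model in the reducible Whittaker-type case---you have identified and correctly attributed to the cited source, so there is no gap.
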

 \begin{lem}\label{lemgro}
     Let $\pi_p\boxtimes \chi_p$ be as above. Then, $Z^{H_p}(W_{\pi_p}^\mathrm{sph}\otimes 1)=L^{H_p}(\pi_p\boxtimes \chi_p,s)$.
     \begin{proof}
         This is again quite well-known and follows from a direct computation (see for example $2.2.2$ in \cite{grossi2020norm}). The key ingredient is to express the values of the normalized Whittaker function $W_{\pi_p}^\mathrm{sph}$ on diagonal matrices, as Weyl-group invariant polynomials in the Satake parameters, in the style of \cite{shintani1976explicit}.
     \end{proof}
     \end{lem}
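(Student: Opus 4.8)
The plan is to reduce the zeta integral to an explicit geometric series by means of the Casselman--Shalika (Shintani) formula for the spherical Whittaker function, and then sum it. Since \Cref{propJPS} already provides absolute convergence for $\mathrm{re}(s)\gg 0$ and meromorphic continuation, it suffices to verify the identity of rational functions on the domain of absolute convergence. \textbf{Step 1 (Casselman--Shalika values).} For an unramified $G_p$-representation $\pi_p$ of Whittaker type with Satake parameters $\alpha=\alpha^{\pi_p},\beta=\beta^{\pi_p}$, the normalized spherical Whittaker function satisfies $W_{\pi_p}^{\mathrm{sph}}(\diag(p^n,1))=0$ for $n<0$ and $W_{\pi_p}^{\mathrm{sph}}(\diag(p^n,1))=p^{-n/2}\dfrac{\alpha^{n+1}-\beta^{n+1}}{\alpha-\beta}$ for $n\ge 0$ (read as $p^{-n/2}(n+1)\alpha^{n}$ when $\alpha=\beta$). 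For irreducible generic $\pi_p$ this is the formula of \cite{shintani1976explicit}; for the reducible Whittaker-type representations (unramified twists of $\mathrm{n}\text{-}\mathrm{Ind}_{B_p}^{G_p}\delta_B^{1/2}$) it follows by continuity, since the spherical vector of the Whittaker model and its diagonal values vary algebraically in the Satake parameters, cf.\ \cite{loeffler2021gross}.

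\textbf{Step 2 (unfolding the integral).} Because $W_{\pi_p}^{\mathrm{sph}}$ is right $G_p^\circ$-invariant and $\diag(up^n,1)=\diag(p^n,1)\diag(u,1)$ with $\diag(u,1)\in G_p^\circ$ for $u\in\mathbf{Z}_p^\times$, the integrand is constant on each coset $p^n\mathbf{Z}_p^\times$; on it $|x|_p=p^{-n}$ and $\mathbf{Z}_p^\times$ has $d^\times x$-volume $1$. In the split case one also uses that $\diag(x^{-1},1)\in H_p^\circ$ when $x\in\mathbf{Z}_p^\times$, so the factor $W_2(\diag(x^{-1},1))=\chi_p(\diag(p,1))^{-n}$ depends only on $n=v_p(x)$; in the non-split case the factor $W_2(1)=1$ is harmless. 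The weight $p^{n/2}=|p^n|_p^{-1/2}$ cancels the $p^{-n/2}$ from Step 1, so with $t:=p^{-s}$ in the non-split case and $t:=\chi_p(\diag(p,1))^{-1}p^{-s}$ in the split case one gets
\[
Z^{H_p}(W_{\pi_p}^{\mathrm{sph}}\otimes 1,s)=\sum_{n\ge 0}\frac{\alpha^{n+1}-\beta^{n+1}}{\alpha-\beta}\,t^{n}.
\]

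\textbf{Step 3 (summation) and the main obstacle.} Splitting $\dfrac{\alpha^{n+1}-\beta^{n+1}}{\alpha-\beta}$ and summing the two geometric series in $\alpha t$ and $\beta t$ yields $\dfrac{1}{\alpha-\beta}\Bigl(\dfrac{\alpha}{1-\alpha t}-\dfrac{\beta}{1-\beta t}\Bigr)=\dfrac{1}{(1-\alpha t)(1-\beta t)}$, which by the definition of $L^{H_p}(\pi_p\boxtimes\chi_p,s)$ is precisely that $L$-factor: in the non-split case $t=p^{-s}$ recovers $L(\pi_p,s)$, and in the split case $t=\chi_p(\diag(p,1))^{-1}p^{-s}$ recovers the twisted Euler factor; the degenerate case $\alpha=\beta$ follows by continuity in the Satake parameters. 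I expect the only genuinely delicate point to be Step 1 --- pinning down the Casselman--Shalika values with the normalizations in force here ($\delta_B^{1/2}$-normalized induction, the Haar and $d^\times x$ conventions fixed in \Cref{GHM}, the additive character $\psi$ of conductor $(1)$) and justifying the formula for the reducible Whittaker-type representations; once this is granted, Steps 2 and 3 are routine bookkeeping.
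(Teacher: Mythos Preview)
Your proof is correct and follows precisely the approach indicated in the paper: the paper's proof sketch simply says to express $W_{\pi_p}^{\mathrm{sph}}$ on diagonal matrices via the Shintani/Casselman--Shalika formula and then compute directly, which is exactly your Steps~1--3. Your treatment is more detailed (in particular you carefully handle the split/non-split cases and the reducible Whittaker-type boundary via continuity in the Satake parameters), but the underlying strategy is identical.
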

\begin{defn}\label{defzetaavg}
    Let $H$ be non-split. We can define another zeta integral by averaging $Z^{H_p}$ over $H_p^\circ$. We define 
    $$Z_\mathrm{avg}^{H_p}(W_1\otimes W_2,s):=\int_{H_p^\circ} Z^{H_p}(h\cdot (W_1\otimes W_2),s)\ dh\ ,\ W_1\otimes W_2\in\mathcal{W}(\pi_p)\boxtimes \mathcal{W}(\chi_p).$$
\end{defn}
\begin{rem}\label{remavg}
    By \Cref{lemgro} we have $Z_\mathrm{avg}^{H_p}(W_{\pi_p}^\mathrm{sph}\otimes 1,s)=Z^{H_p}(W_{\pi_p}^\mathrm{sph}\otimes 1,s)=L^{H_p}(\pi_p\boxtimes \chi_p,s)$. Also by \Cref{propJPS}, $Z_\mathrm{avg}^{H_p}(W_1\otimes W_2,s)$ also defines an element of the fractional ideal $L^{H_p}(\pi_p\boxtimes\chi_p,s)\mathbf{C}[p^{s},p^{-s}]$.
\end{rem}
\begin{defn}\label{def linear form}
    We define the linear form $\mathcal{Z}^{H_p}:\pi_p\boxtimes \chi_p\rightarrow\mathbf{C}$ by
    $$\mathcal{Z}^{H_p}(W_1\otimes W_2):= \lim_{s\rightarrow 0}\begin{dcases*}
       Z_\mathrm{avg}^{H_p}(W_1\otimes W_2,s+1/2)L^{H_p}(\pi_p\boxtimes \chi_p,s+1/2)^{-1}\ ,\ H\text{ non-split}\\
        Z^{H_p}(W_1\otimes W_2,s+1/2)L^{H_p}(\pi_p\boxtimes \chi_p,s+1/2)^{-1}\ ,\ H\text{ split}.
    \end{dcases*}$$
\end{defn}

    \noindent By \Cref{propJPS} and \Cref{remavg}, the linear form $\mathcal{Z}^{H_p}$ is well-defined both when $H$ is split and non-split. Recall that we are considering representations $\pi_p\boxtimes \chi_p$ where $\omega_{\pi_p}=\chi_p|_{\mathbf{Q}_p^\times}$. Under this assumption, a zeta integral computation, shows that in both cases, $\mathcal{Z}^{H_p}$ is in fact a non-zero element of $\Hom_{\mathfrak{H}_p}(\pi_p\boxtimes \chi_p,\mathbf{1})$, which satisfies $\mathcal{Z}^{H_p}(W_{\pi_p}^\mathrm{sph}\otimes 1)=1$. We provide the necessary details for this in the split case: Let $h=\left[\begin{smallmatrix}
        h_1 & \\
        & h_2
    \end{smallmatrix}\right]\in H_p$. We have
    \begin{align*}
        Z^{H_p}(h\cdot (W_1\otimes W_2),s+1/2)&=\int_{\mathbf{Q}_p^\times }(h\cdot W_1)(\left[\begin{smallmatrix}
            x & \\
            & 1
        \end{smallmatrix}\right])(h^{-1}\cdot W_2)(\left[\begin{smallmatrix}
            x^{-1} & \\
            & 1
        \end{smallmatrix}\right])|x|_p^s\ d^\times x\\
        &\scalemath{0.95}{=\omega_{\pi_p}(\left[\begin{smallmatrix}
            h_2 & \\
            & h_2
        \end{smallmatrix}\right])\chi_p(\left[\begin{smallmatrix}
            h_2 & \\
            & h_2
        \end{smallmatrix}\right])^{-1}\int_{\mathbf{Q}_p^\times} W_1\left(\left[\begin{smallmatrix}
            x h_1h_2^{-1} & \\
            & 1
        \end{smallmatrix}\right]\right) W_2\left(\left[\begin{smallmatrix}
            (x h_1h_2^{-1})^{-1} & \\
            & 1
        \end{smallmatrix}\right]\right) |x|_p^s \ d^\times x}\\
        &=|xh_1^{-1}h_2|_p^s\ Z^{H_p}(W_1\otimes W_2 ,s+1/2).
    \end{align*}
    Thus, applying $\lim_{s\rightarrow 0}\frac{(-)}{L^{H_p}(\pi_p\boxtimes \chi_p,s+1/2)}$ we get that $\mathcal{Z}^{H_p}(h\cdot (W_1\otimes W_2),s+1/2)=\mathcal{Z}^{H_p}(W_1\otimes W_2,s+1/2)$ as claimed.
    \\

    \noindent From this, we can deduce two things. Firstly, by \Cref{mult1}, the linear form $\mathcal{Z}^{H_p}$ forms a basis of this $\Hom$-space and shows that $[W_{\pi_p}^\mathrm{sph}\otimes 1]$ is non-zero in $(\pi_p\boxtimes \chi_p)_{\mathfrak{H}_p}$ in the split case well. Furthermore, if $H$ is non-split, \Cref{mult1} implies that $\mathcal{Z}^{H_p}$ and $\mathfrak{z}_p^{ns}$ are actually equal.
\section{Integral multiplicity one}\label{sec int structures}
Throughput, we let $\pi_p$ be an unramified $G_p$-representation of Whittaker type and $\chi_p$ an unramified character of $H_p$ with identical central characters. We also let $A$ denote a $\mathbf{Z}[1/p]$-algebra and we define the $A$-lattice inside $(\pi_p\boxtimes\chi_p)_{\mathfrak{H}_p}$ as follows:
$$\mathcal{L}^{H_p}_{\pi_p\boxtimes\chi_p}(A):=\mathrm{span}_{A}\left\{\frac{1}{\vol_{\mathfrak{H}_p}(\mathfrak{H}_p\cap g\mathfrak{G}_p^\circ g^{-1})}g\cdot W_{\pi_p\boxtimes\chi_p}^\mathrm{sph}\ |\ g\in\mathfrak{G}_p\right\}\subseteq(\pi_p\boxtimes\chi_p)_{\mathfrak{H}_p}.$$
A priori, $\mathcal{L}^{H_p}_{\pi_p\boxtimes\chi_p}(A)$ is simply an $A$-submodule of the one-dimensional vector space $(\pi_p\boxtimes\chi_p)_{\mathfrak{H}_p}$ and hence has possibly uncountable $A$-rank. However, we have the following.
\begin{thm}\label{integral mult one}
    Suppose that the spherical Hecke eigensystem of $\pi_p\boxtimes\chi_p$ restricts to a morphism $\mathcal{R}_A^{\mathfrak{G}_p}\rightarrow A$. Then the following equivalent statements are true:
    \begin{enumerate}
        \item  The unique normalized period $\mathcal{Z}^{H_p}$ in $\mathrm{Hom}_{\mathfrak{H}_p}(\pi_p\boxtimes\chi_p,\mathbf{1})$ takes values in $A$ when evaluated on elements of $\mathcal{L}^{H_p}_{\pi_p\boxtimes\chi_p}(A)$.
        \item We have $\mathrm{rank}_A\ \mathcal{L}^{H_p}_{\pi_p\boxtimes\chi_p}(A)=1.$
    \end{enumerate}
    \begin{proof}
        This follows at once from \Cref{lemlattices} and the first part of \Cref{thmL1} by taking $V=(\pi_p\boxtimes\chi_p)^\vee$ and $\mathfrak{z}$ to be the map given by the composition of $i:C_{\mathfrak{H}_p\backslash c}^{\infty}\simeq \mathcal{H}(\mathfrak{G}_p)_{\mathfrak{H}_p}$ and $\mathcal{H}(\mathfrak{G}_p)_{\mathfrak{H}_p}\rightarrow(\pi_p\boxtimes\chi_p)^\vee,\ \xi\mapsto \left\{ W\mapsto \mathcal{Z}^{H_p}(\xi\cdot W)\right\}.$
    \end{proof}
\end{thm}

This can be regarded as an \textit{integral analogue} of Waldspurger's multiplicity one result for toric periods in the unramified setting. Indeed, Waldspurger's theorem can be restated as $\mathrm{rank}_\mathbf{C}\ \mathcal{L}^{H_p}_{\pi_p\boxtimes\chi_p}(\mathbf{C})=1.$.
 
  In theory, using our zeta-integral description, one should be able to explicitly show, by direct computation, that the aforementioned linear form take values in $A$ when evaluated on elements of $\mathcal{L}^{H_p}_{\pi_p\boxtimes\chi_p}(A)$. However, this is unsurprisingly very computationally involved, especially in the non-split case. We will actually do this later on in order to obtain explicit formulas for local Shintani functions in terms of Satake parameters and $L$-factor.  
 \section{Shintani functions}\label{secshintani}
 Shintani functions were first introduced by Shintani for symplectic groups. They were then studied in detail in 
 \cite{murase1991whittaker} for $\mathrm{Sp}_{n}$, and later on in \cite{murase1996shintani} for $\GL_{n}$. In this section, mainly motivated by \cite{murase1996shintani}, we define local Shintani-type functions for $(\mathfrak{H},\mathfrak{G})$. Using results from earlier sections, we are able to deduce existence and uniqueness of these functions in a canonical way. Furthermore, we introduce a uniform universal approach, and define a \textit{universal Shintani function} which is a function on $\mathfrak{H}_p\backslash\mathfrak{G}_p/\mathfrak{G}_p^\circ$, but instead takes values in the Hecke algebra $\mathcal{R}^{\mathfrak{G}_p}$. Additionally, we show that the universal Shintani function, actually takes values in $\frac{1}{\#H_Z(\mathbf{F}_p)}\mathcal{R}_{\mathbf{Z}[1/p]}^{\mathfrak{G}_p}$.
 \subsection{Local Shintani functions}\label{seclocalsh}
 Let $\pi_p$ be a (complex) unramified $G_p$-representation of Whittaker type, and $\chi_p$ a (complex) unramified character of $H_p$ with $\omega_{\pi_p}=\chi_p|_{\mathbf{Q}_p^\times}$. To the pair $(\pi_p,\chi_p)$, we attach the \textit{space of local Shintani functions}
 \begin{align}\label{eq:shintani}\mathfrak{Sh}(\pi_p,\chi_p):=\left\{S:\mathfrak{H}_p\backslash\mathfrak{G}_p/\mathfrak{G}_p^\circ\longrightarrow \mathbf{C}\ |\ \mathcal{P}* S=\Theta_{\pi_p\boxtimes \chi_p}(\mathcal{P})S\ \text{for all }\mathcal{P}\in\mathcal{R}^{\mathfrak{G}_p}\right\}\end{align}
 where the $*$-action is as in \Cref{UHM}. 
 It is worth noting that $\mathfrak{H}_p\backslash\mathfrak{G}_p/\mathfrak{G}_p^\circ$ can be canonically identified with $H_p^\circ\backslash G_p/G_p^\circ$. Thus functions in $\mathfrak{Sh}(\pi_p,\chi_p)$ can also be regarded as functions on $H_p^\circ\backslash G_p/G_p^\circ$, which more closely resembles the definitions in \cite{murase1991whittaker} and \cite{murase1996shintani}.
 \begin{thm}\label{thmlocalsh}
Let $\pi_p$ and $\chi_p$ be as above. There is a natural isomorphism 
$$\Hom_{\mathfrak{H}_p}(\pi_p\boxtimes \chi_p,\mathbf{C})\simeq \mathfrak{Sh}(\pi_p,\chi_p)$$
and hence the space of local Shintani functions is one dimensional.
\begin{proof}
    Let $\zeta$ be an element of $\Hom_{\mathfrak{H}_p}(\pi_p\boxtimes \chi_p,\mathbf{C})$. We can associate to it, a function from $\mathfrak{G}_p$ to $\mathbf{C}$, which we call $\mathrm{Sh}_\zeta$. This function is given by $\mathrm{Sh}_\zeta\left((g,h)\right):=\zeta\left((g,h)\cdot (W_{\pi_p}^\mathrm{sph}\otimes 1)\right)$. It follows by construction, that this function factors through $\mathfrak{H}_p\backslash \mathfrak{G}_p/\mathfrak{G}_p^\circ$. Also, a somewhat lengthy, but standard unravelling of definitions, shows that it also satisfies the Hecke eigenvalue property of \eqref{eq:shintani}. Thus,  we obtain an injective linear map
    \begin{align}\label{eq: map 1}\Hom_{\mathfrak{H}_p}(\pi_p\boxtimes \chi_p,\mathbf{C})\hookrightarrow \mathfrak{Sh}(\pi_p,\chi_p)\ ,\ \zeta\mapsto \mathrm{Sh}_\zeta.\end{align}
    Using the proof of \Cref{thmCunivfree} one can deduce every element of $\mathfrak{Sh}(\pi_p,\chi_p)$ is determined by its value on the identity. Hence, \eqref{eq: map 1} is an isomorphism by dimension counting, since we already know that $\Hom_{\mathfrak{H}_p}(\pi_p\boxtimes \chi_p,\mathbf{C})$ is one dimensional.  
\end{proof}
 \end{thm}

 \begin{defn}
     Let $\pi_p$ and $\chi_p$ be as in the beginning of \emph{\Cref{sec unv shintani}}. The normalized local Shintani function attached to the pair $(\pi_p,\chi_p)$, is given by
     $$\mathrm{Sh}_{\pi_p,\chi_p}^{H_p}:\mathfrak{H}_p\backslash \mathfrak{G}_p/\mathfrak{G}_p^\circ\rightarrow\mathbf{C}\ ,\mathrm{Sh}_{\pi_p,\chi_p}^{H_p}(g,h):=\mathcal{Z}^{H_p}((g,h)\cdot (W_{\pi_p}^\mathrm{sph}\otimes 1))$$
     where recall that $\mathcal{Z}^{H_p}$ is the normalized basis element of the one dimensional space $\Hom_{\mathfrak{H}_p}(\pi_p\boxtimes \chi_p,\mathbf{ 1})$, mapping $W_{\pi_p}^\mathrm{sph}\otimes 1$ to $1$.
 \end{defn}

 \subsection{Universal Shintani functions}\label{sec unv shintani} 

 In \Cref{seclocalsh}, to each pair $(\pi_p,\chi_p)$, we have attached a (unique) complex valued Shintani function $\mathrm{Sh}_{\pi_p,\chi_p}^{H_p}$. In this section, we essentially want to combine the data of the functions $\mathrm{Sh}_{\pi_p,\chi_p}^{H_p}$, as we vary $(\pi_p,\chi_p)$, into a single universal Shintani function. This will no longer be a complex valued function, but instead, it will take values in the Hecke algebra $\mathcal{R}^{\mathfrak{G}_p}$. \\
 \\
 We know from \Cref{cor Co and I free}
 that $\mathbf{C}[\mathfrak{G}_p/\mathfrak{G}_p^\circ]_{\mathfrak{H}_p}$ is a free $\mathcal{R}^{\mathfrak{G}_p}$-module of rank one, generated by $[\ch(\mathfrak{G}_p^\circ)]$. This gives a canonical candidate for the universal Shintani function.
 \begin{defn}\label{defunivsh}
     The universal Shintani function $\mathrm{Sh}^{H_p,\mathrm{unv}}$ is the function
     $$\mathrm{Sh}^{H_p,\mathrm{unv}}:\mathfrak{H}_p\backslash \mathfrak{G}_p/\mathfrak{G}_p^\circ\longrightarrow \mathcal{R}^{\mathfrak{G}_p}\ ,\ \mathrm{Sh}^{H_p,\mathrm{unv}}(g,h):=\mathcal{Q}_{(g,h)}^{'}$$
     where $\mathcal{Q}_{(g,h)}$ is the unique Hecke operator in $\mathcal{R}^{\mathfrak{G}_p}$, for which $\mathcal{Q}_{(g,h)}*[\ch(\mathfrak{G}_p^\circ)]=[\ch\left((g,h)\mathfrak{G}_p^\circ)\right)]$ in $\mathbf{C}[\mathfrak{G}_p/\mathfrak{G}_p^\circ]_{\mathfrak{H}_p}$.
 \end{defn}
 \noindent The function $\mathrm{Sh}^{H_p,\mathrm{unv}}$ is well-defined. Before stating and proving the main results of this section, we need the following technical result that closely resembles Proposition $4.4.1$ in \cite{Loeffler_2021}.
\begin{prop}\label{proptech}
    Let $\pi_p$ be an unramified $G_p$-representation and $\chi_p$ an unramified character of $H_p$ with $\omega_{\pi_p}=\chi_p|_{\mathbf{Q}_p^\times}$. Let $\zeta\in\Hom_{\mathfrak{H}_p}(\pi_p\boxtimes \chi_p,\mathbf{C})$, and let $\xi=\sum_i c_i[\ch((g_i,h_i)\mathfrak{G}_p^\circ)]$ in $\mathbf{C}[\mathfrak{G}_p/\mathfrak{G}_p^\circ]_{\mathfrak{H}_p}$. Let $\mathcal{Q}$, be an element of $\mathcal{R}^{\mathfrak{G}_p}$, for which $\xi=\mathcal{Q}*[\ch(\mathfrak{G}_p^\circ)]$. Then
    $$\sum_ic_i\zeta((g_i,h_i)\cdot (W_{\pi_p}^\mathrm{sph}\otimes 1))=\Theta_{\pi_p\boxtimes \chi_p}\left(\mathcal{Q}^{'}\right)\zeta(W_{\pi_p}^\mathrm{sph}\otimes 1).$$
    \begin{proof}
        An unraveling of definitions shows that the map
        $$\mathfrak{z}:\mathbf{C}[\mathfrak{G}_p/\mathfrak{G}_p^\circ]\longrightarrow\mathbf{C}\ ,\ \xi\mapsto \zeta(\xi\cdot (W_{\pi_p}^\mathrm{sph}\otimes 1))$$
        factors through $\mathbf{C}[\mathfrak{G}_p/\mathfrak{G}_p^\circ]_{\mathfrak{H}_p}$. Now, $\mathfrak{z}([\xi])$ is equal to $\mathfrak{z}\left(\mathcal{Q}*[\ch(\mathfrak{G}_p^\circ)]\right)$ which is in turn equal to $\mathfrak{z}([\mathcal{Q}^{'}])$ and hence $\Theta_{\pi_p\boxtimes \chi_p}\left(\mathcal{Q}^{'}\right)\zeta(W_{\pi_p}^\mathrm{sph}\otimes 1)$. The result then follows by also evaluating $\mathfrak{z}(\xi)$ directly using $\xi=\sum_i c_i[\ch((g_i,h_i)\mathfrak{G}_p^\circ)]$. \end{proof}
\end{prop}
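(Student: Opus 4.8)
The plan is to pass to the auxiliary $\mathbf{C}$-linear functional
$\mathfrak{z}\colon \mathbf{C}[\mathfrak{G}_p/\mathfrak{G}_p^\circ]\to\mathbf{C}$, $\mu\mapsto\zeta(\mu\cdot(\varphi_p\otimes 1))$,
where $\mu\cdot(\varphi_p\otimes 1)$ denotes the value of the Hecke-module action of $\mu\in\mathcal{H}(\mathfrak{G}_p)$ on $\pi_p\boxtimes\chi_p$, and to observe that the two sides of the asserted identity are simply two computations of $\mathfrak{z}$ applied to one and the same class in $\mathbf{C}[\mathfrak{G}_p/\mathfrak{G}_p^\circ]_{\mathfrak{H}_p}$.

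First I would check that $\mathfrak{z}$ descends to the $\mathfrak{H}_p$-coinvariants. For $(h,h^{-1})\in\mathfrak{H}_p$ and a right $\mathfrak{G}_p^\circ$-invariant $\mu$, substituting $g\mapsto (h,h^{-1})g$ in the convolution integral and using left-invariance of the Haar measure on $\mathfrak{G}_p$ gives $\bigl((h,h^{-1})\cdot\mu\bigr)\cdot(\varphi_p\otimes 1)=(h,h^{-1})\cdot\bigl(\mu\cdot(\varphi_p\otimes 1)\bigr)$ inside $\pi_p\boxtimes\chi_p$, where on the left $\mathfrak{H}_p$ acts on $\mathbf{C}[\mathfrak{G}_p/\mathfrak{G}_p^\circ]$ as in \Cref{maps}. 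Applying the $\mathfrak{H}_p$-invariant functional $\zeta$ removes the group element, so $\mathfrak{z}\bigl((h,h^{-1})\cdot\mu\bigr)=\mathfrak{z}(\mu)$ and $\mathfrak{z}$ factors through a functional $\overline{\mathfrak{z}}$ on $\mathbf{C}[\mathfrak{G}_p/\mathfrak{G}_p^\circ]_{\mathfrak{H}_p}$.

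Next I would evaluate $\overline{\mathfrak{z}}$ on the class $\xi$ in the two ways it is presented. Since $\mathfrak{G}_p^\circ$ has volume one and fixes $\varphi_p\otimes 1$, one has $\ch((g_i,h_i)\mathfrak{G}_p^\circ)\cdot(\varphi_p\otimes 1)=(g_i,h_i)\cdot(\varphi_p\otimes 1)$, so $\overline{\mathfrak{z}}(\xi)=\sum_i c_i\,\zeta\bigl((g_i,h_i)\cdot(\varphi_p\otimes 1)\bigr)$, the left-hand side. For the other description, lift $\mathcal{Q}$ to $\widetilde{\mathcal{Q}}\in\mathcal{H}_{\mathfrak{G}_p}^\circ$; the identity $(\phi\otimes f)*\eta=\eta\cdot(\phi\otimes f)^{'}$ from the preliminaries together with the fact that $\ch(\mathfrak{G}_p^\circ)$ is a unit for convolution on $\mathcal{H}_{\mathfrak{G}_p}^\circ$ gives $\xi=\widetilde{\mathcal{Q}}*[\ch(\mathfrak{G}_p^\circ)]=[\widetilde{\mathcal{Q}}^{'}]$ in the coinvariants, so that $\overline{\mathfrak{z}}(\xi)=\zeta\bigl(\widetilde{\mathcal{Q}}^{'}\cdot(\varphi_p\otimes 1)\bigr)=\Theta_{\pi_p\boxtimes\chi_p}(\widetilde{\mathcal{Q}}^{'})\,\zeta(\varphi_p\otimes 1)$ because $\varphi_p\otimes 1$ is the spherical vector. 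The hypothesis $\omega_{\pi_p}=\chi_p|_{\mathbf{Q}_p^\times}$ makes $\Theta_{\pi_p\boxtimes\chi_p}$ factor through $\mathcal{R}^{H_p}$ (on which $(-)^{'}$ is well-defined), so $\Theta_{\pi_p\boxtimes\chi_p}(\widetilde{\mathcal{Q}}^{'})=\Theta_{\pi_p\boxtimes\chi_p}(\mathcal{Q}^{'})$ independently of the chosen lift, and comparing the two evaluations of $\overline{\mathfrak{z}}$ yields the claim.

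The only step that is not pure bookkeeping is the factorization of $\mathfrak{z}$ through the coinvariants: one must correctly match the left $\mathfrak{H}_p$-action on $\mathbf{C}[\mathfrak{G}_p/\mathfrak{G}_p^\circ]$ used to form the coinvariants with the $\mathfrak{G}_p$-action on $\pi_p\boxtimes\chi_p$ under the module structure — which is exactly the substitution in the convolution integral above — and then invoke $\mathfrak{H}_p$-invariance of $\zeta$. Everything else (the unit property of $\ch(\mathfrak{G}_p^\circ)$, the interplay of the $*$-action with the inversion involution $(-)^{'}$, and the eigenvector property of the spherical vector) is already recorded in the preliminaries.
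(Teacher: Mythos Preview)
Your proof is correct and follows essentially the same approach as the paper: define the auxiliary functional $\mathfrak{z}(\mu)=\zeta(\mu\cdot(\varphi_p\otimes 1))$, observe it factors through the $\mathfrak{H}_p$-coinvariants, and compute it on $\xi$ in two ways using the identity $\mathcal{Q}*[\ch(\mathfrak{G}_p^\circ)]=[\mathcal{Q}^{'}]$ and the spherical eigenvector property. You simply supply more of the routine verifications (the convolution substitution for the coinvariant factorization, the lift of $\mathcal{Q}$ and independence of that lift) that the paper leaves implicit.
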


\begin{thm}\label{thmunvsh}
             The universal Shintani function satisfies the following properties:
             \begin{enumerate}
                 \item It is the unique function from $\mathfrak{H}_p\backslash \mathfrak{G}_p/\mathfrak{G}_p^\circ$ to $\mathcal{R}^{\mathfrak{G}_p}$ that makes the following diagram
                 \[\begin{tikzcd}
	{\mathfrak{H}_p\backslash \mathfrak{G}_p/\mathfrak{G}_p^\circ} && {\mathcal{R}^{\mathfrak{G}_p}} \\
	&& {\mathbf{C}}
	\arrow["{{\mathrm{Sh}^{H_p}_{\pi_p,\chi_p}}}"', from=1-1, to=2-3]
	\arrow["{{\Theta_{\pi_p\boxtimes \chi_p}}}", from=1-3, to=2-3]
	\arrow["{{\mathrm{Sh}^{H_p,\mathrm{unv}}}}", from=1-1, to=1-3]
\end{tikzcd}\]
commute, for all pairs $(\pi_p,\chi_p)$ where $\pi_p$ is an unramified $G_p$-representation of Whittaker type, and $\chi_p$ is an unramified character of $H_p$ with $\omega_{\pi_p}=\chi_p|_{\mathbf{Q}_p^\times}$.
         \item It is valued in $\frac{1}{\#H_Z(\mathbf{F}_p)}\mathcal{R}_{\mathbf{Z}[1/p]}^{\mathfrak{G}_p}$.
             \end{enumerate}
             \begin{proof}
             The commutativity of the diagram follows from \Cref{proptech}. For uniqueness, suppose that $S:\mathfrak{H}_p\backslash \mathfrak{G}_p/\mathfrak{G}_p^\circ\rightarrow \mathcal{R}^{\mathfrak{G}_p}$ is another such function. Then for any fixed $(g,h)\in\mathfrak{G}_p$, we have
             $$\Theta_{\pi_p\boxtimes \chi_p}\left(\mathrm{Sh}^{H_p,\mathrm{unv}}(g,h)\right)=\Theta_{\pi_p\boxtimes \chi_p}\left(S(g,h)\right)$$
             and this holds for all representations $\pi_p\boxtimes \chi_p$ as in the statement of the result. However, the family of these representations is dense in $\mathrm{Spec}(\mathcal{R}^{\mathfrak{G}_p})$ and the result follows.
             For the third part, it is once again sufficient by \cref{Dns} and \Cref{Ds} to consider the elements $\mathrm{Sh}^{H_p,\mathrm{unv}}\left(\gamma_{\lambda,a}\right)$ if $H$ is non-split, and $\mathrm{Sh}^{H_p,\mathrm{unv}}\left(\gamma_{\lambda,a,b}\right)$ if $H$ is split for $\lambda\in\mathbf{Z}_{\geq 0}$ and $a,b\in\mathbf{Z}$ . These are respectively equal to $\mathcal{Q}_{\gamma_{\lambda,a}}^{'}$ and $\mathcal{Q}_{\gamma_{\lambda,a,b}}^{'}$. By \Cref{lemmap}, these are  equal to $\frac{1}{\#H_Z(\mathbf{F}_p)}p^{\lambda-1}\mathcal{P}_{\xi_{\lambda,a}}^\mathrm{unv}$ and $\frac{1}{\#H_Z(\mathbf{F}_p)}p^{\lambda-1}\mathcal{P}_{\xi_{\lambda,a,b}}^\mathrm{unv}$ respectively. Finally, by \Cref{thmL1}, these are elements of $\frac{1}{\#H_Z(\mathbf{F}_p)}\mathcal{R}_{\mathbf{Z}[1/p]}^{\mathfrak{G}_p}$. This concludes the proof.
             \end{proof}
         \end{thm}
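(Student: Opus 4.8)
The plan is to prove \Cref{thmunvsh} in three separate pieces, matching the three assertions in the statement: first the commutativity of the triangle, then uniqueness, then the integrality bound.

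For commutativity, the key input is \Cref{proptech}. Fix $(\pi_p,\chi_p)$ as in the statement, and let $\zeta$ be a nonzero element of the one-dimensional space $\Hom_{\mathfrak{H}_p}(\pi_p\boxtimes\chi_p,\mathbf{C})$ normalized so that $\zeta(\varphi_p\otimes 1)=1$; by construction $\mathrm{Sh}^{H_p}_{\pi_p,\chi_p}(g,h)=\zeta((g,h)\cdot(\varphi_p\otimes 1))$. By \Cref{defunivsh}, $\mathrm{Sh}^{H_p,\mathrm{unv}}(g,h)=\mathcal{Q}_{(g,h)}^{'}$ where $\mathcal{Q}_{(g,h)}*[\ch(\mathfrak{G}_p^\circ)]=[\ch((g,h)\mathfrak{G}_p^\circ)]$. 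Applying \Cref{proptech} with $\xi=[\ch((g,h)\mathfrak{G}_p^\circ)]$ (a single term, $c=1$) and $\mathcal{Q}=\mathcal{Q}_{(g,h)}$ gives directly
$$\zeta((g,h)\cdot(\varphi_p\otimes 1))=\Theta_{\pi_p\boxtimes\chi_p}(\mathcal{Q}_{(g,h)}^{'})\cdot\zeta(\varphi_p\otimes 1)=\Theta_{\pi_p\boxtimes\chi_p}(\mathrm{Sh}^{H_p,\mathrm{unv}}(g,h)),$$
which is exactly the claimed commutativity.

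For uniqueness, suppose $S$ is another function making the triangle commute for all such $(\pi_p,\chi_p)$. Then for every fixed $(g,h)$ we have $\Theta_{\pi_p\boxtimes\chi_p}(\mathrm{Sh}^{H_p,\mathrm{unv}}(g,h)-S(g,h))=0$ for all pairs $(\pi_p,\chi_p)$. The point is that the eigensystems $\Theta_{\pi_p\boxtimes\chi_p}$, as $\pi_p$ ranges over unramified principal series of Whittaker type and $\chi_p$ over compatible unramified quasi-characters, correspond to a Zariski-dense set of closed points of $\mathrm{Spec}(\mathcal{R}^{H_p})$ — concretely, one can choose Satake parameters $(\alpha,\beta)$ freely and then any $\chi_p$ with $\chi_p|_{\mathbf{Q}_p^\times}=\omega_{\pi_p}$, and this sweeps out a dense subset of $\mathrm{Spec}(\mathbf{k}[\mathcal{T}_p,\mathcal{S}_p^{\pm1},A_p^{\pm1}])$ (resp.\ $\mathbf{k}[\mathcal{T}_p,\mathcal{S}_p^{\pm1}]$). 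An element of the finitely generated $\mathbf{C}$-algebra $\mathcal{R}^{H_p}$ vanishing at a dense set of closed points is zero, so $\mathrm{Sh}^{H_p,\mathrm{unv}}(g,h)=S(g,h)$.

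For the integrality statement, by \Cref{Dns} and \Cref{Ds} it suffices to bound $\mathrm{Sh}^{H_p,\mathrm{unv}}$ on the double-coset representatives $\gamma_{\lambda,a}$ (non-split) and $\gamma_{\lambda,a,b}$ (split). By \Cref{defunivsh} these values are $\mathcal{Q}_{\gamma_{\lambda,a}}^{'}$, resp.\ $\mathcal{Q}_{\gamma_{\lambda,a,b}}^{'}$. Now \Cref{lemmap} says $\zeta_\mathrm{w}^{H_p,\mathrm{unv}}(\xi_{\lambda,a})=\#H_Z(\mathbf{Z}/p^\lambda\mathbf{Z})[\ch(\gamma_{\lambda,a}\mathfrak{G}_p^\circ)]=p^{\lambda-1}\#H_Z(\mathbf{F}_p)[\ch(\gamma_{\lambda,a}\mathfrak{G}_p^\circ)]$ for $\lambda\geq 1$ (and the $\#H_Z(\mathbf{F}_p)$-free value $[\ch(\gamma_{0,a}\mathfrak{G}_p^\circ)]$ at $\lambda=0$, which I will treat as the degenerate case), while $\xi_{\lambda,a}=\mathcal{P}^\mathrm{unv}_{\xi_{\lambda,a}}*\ch(\mathfrak{H}_p\mathfrak{G}_p^\circ)$ by definition of the universal Hecke operator. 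Comparing, and using that $\zeta_\mathrm{w}^{H_p,\mathrm{unv}}$ sends $\ch(\mathfrak{H}_p\mathfrak{G}_p^\circ)$ to $[\ch(\mathfrak{G}_p^\circ)]$ and is Hecke-equivariant, we get $\mathcal{Q}_{\gamma_{\lambda,a}}=\frac{1}{p^{\lambda-1}\#H_Z(\mathbf{F}_p)}\mathcal{P}^\mathrm{unv}_{\xi_{\lambda,a}}$; applying $(-)^{'}$ and invoking \Cref{thmL1}(2) (which says $\mathcal{P}^\mathrm{unv}_{\xi_{\lambda,a}}\in\mathcal{R}_{\mathbf{Z}[1/p]}^{H_p}$ since $\xi_{\lambda,a}\in\mathcal{L}^{H_p,\mathrm{unv}}$), together with the fact that $p$ is a unit in $\mathbf{Z}[1/p]$, shows $\mathrm{Sh}^{H_p,\mathrm{unv}}(\gamma_{\lambda,a})\in\frac{1}{\#H_Z(\mathbf{F}_p)}\mathcal{R}_{\mathbf{Z}[1/p]}^{H_p}$. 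The split case is identical after carrying the extra index $b$. The main obstacle is bookkeeping rather than conceptual: one must be careful that the $\lambda=0$ term really does land in $\frac{1}{\#H_Z(\mathbf{F}_p)}\mathcal{R}_{\mathbf{Z}[1/p]}^{H_p}$ (in fact in $\mathcal{R}_{\mathbf{Z}[1/p]}^{H_p}$, since there the normalizing factor is $\vol_{H_p}(H_p^\circ)^{-1}=1$), and that the density argument for uniqueness is applied to the correct polynomial ring; neither is genuinely hard, but both need to be stated precisely.
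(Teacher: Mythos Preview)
Your proof is correct and follows essentially the same approach as the paper's: commutativity via \Cref{proptech}, uniqueness via Zariski density of the eigensystems in $\mathrm{Spec}(\mathcal{R}^{H_p})$, and integrality by reducing to the coset representatives via \Cref{Dns}/\Cref{Ds}, relating $\mathcal{Q}_{\gamma}$ to $\mathcal{P}^{\mathrm{unv}}_{\xi}$ through \Cref{lemmap}, and invoking \Cref{thmL1}(2). Your explicit handling of the $\lambda=0$ case and of the density claim is slightly more detailed than the paper's, but the argument is the same.
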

         
        \subsection{Explicit formulas in terms of Satake parameters and $L$-factors}
        We have so far constructed local and universal Shintani functions and have proven their uniqueness. In this section, we will really use the zeta integrals introduced in \Cref{seczeta} in order to obtain explicit formulas for the Shintani functions, in terms of Satake parameters and local $L$-factors attached to each pair $(\pi_p,\chi_p)$, where $\pi_p$ is an unramified $G_p$-representation of Whittaker type and $\chi_p$ is an unramified character of $H_p$ with $\omega_{\pi_p}=\chi_p|_{\mathbf{Q}_p^\times}$. We will denote the Schur polynomial $(x^{n+1}-y^{n+1})/(x-y)$, of homogeneous degree $n$, by $\mathrm{Sch}_n(x,y)$. 
        \subsubsection{Split $H$}\label{secHsplit} We firstly deal with the case where $H$ is split. Since $$\mathrm{Sh}_{\pi_p,\chi_p}^{H_p}(\gamma_{\lambda,a,b})=\chi_p(\left[\begin{smallmatrix}
            p^a & \\
             & 1
        \end{smallmatrix}\right]p^b)\mathrm{Sh}_{\pi_p,\chi_p}^{H_p}(\gamma_{\lambda,0,0})$$it suffices to study in detail the values of $\mathrm{Sh}_{\pi_p,\chi_p}^{H_p}(\gamma_{\lambda,0,0})$ for $\lambda\geq 1$. By construction, this is given by $\mathcal{Z}^{H_p}\left(\gamma_{\lambda,0,0}\cdot (W_{\pi_p}^\mathrm{sph}\otimes 1)\right)$, and by $\mathfrak{H}_p$-equivariance, this in turn is equal to $\mathcal{Z}^{H_p}\left((n_\lambda,\left[\begin{smallmatrix}
            p^\lambda & \\
             & 1
        \end{smallmatrix}\right])\cdot (W_{\pi_p}^\mathrm{sph}\otimes 1)\right)$, where $n_\lambda:=\left[\begin{smallmatrix}
            1 & p^{-\lambda} \\
            & 1
        \end{smallmatrix}\right]$. We evaluate
     \begin{align*}
         \mathcal{Z}^{H_p}\left((n_\lambda,\left[\begin{smallmatrix}
            p^\lambda & \\
             & 1
        \end{smallmatrix}\right])\cdot (W_{\pi_p}^\mathrm{sph}\otimes 1),s+\frac{1}{2}\right)&=\chi_p\left(\left[\begin{smallmatrix}
            p^\lambda & \\
            & 1
        \end{smallmatrix}\right]\right)\int_{\mathbf{Q}_p^\times}(n_\lambda\cdot W_{\pi_p}^\mathrm{sph})(\left[\begin{smallmatrix}
            x & \\
            & 1
        \end{smallmatrix}\right])\chi_p(\left[\begin{smallmatrix}
            x & \\
            & 1
        \end{smallmatrix}\right])^{-1}|x|_p^{s}\ d^\times x\\
        &=\chi_p\left(\left[\begin{smallmatrix}
            p^\lambda & \\
            & 1
        \end{smallmatrix}\right]\right)\int_{\mathbf{Q}_p^\times}\psi(p^{-\lambda} x)W_{\pi_p}^\mathrm{sph}(\left[\begin{smallmatrix}
            x & \\
            & 1
        \end{smallmatrix}\right])\chi_p(\left[\begin{smallmatrix}
            x & \\
             & 1
        \end{smallmatrix}\right])^{-1}|x|_p^s\ d^\times x
     \end{align*}
     Once again using \cite{shintani1976explicit}, one can show that the integral on the right is given by 
     $$L^{H_p}\left(\pi_p\boxtimes \chi_p,s+\frac{1}{2}\right)+\sum_{m=0}^{\lambda-1}\mathrm{Sch}_m(\alpha^{\pi_p},\beta^{\pi_p})\left(\int_{p^m\mathbf{Z}_p^\times}\psi(p^{-\lambda}x)-1\ d^\times x\right)\chi_p(\left[\begin{smallmatrix}
         p & \\
          & 1
     \end{smallmatrix}\right])^{-m}p^{-m(s+\frac{1}{2})}.$$
     An integral computation using orthogonality of characters, shows that for $m=0,\dots,\lambda-1$, one has
     $$\int_{p^m\mathbf{Z}_p^\times}\psi(p^{-\lambda}x)-1\ d^\times x=\begin{cases}
         -\frac{p}{p-1}\ ,\ \text{if }m=\lambda-1\\
         -1\ ,\ \text{otherwise}.
     \end{cases}$$
     We write $\delta_{\chi_p}$ for $\chi_p(\left[\begin{smallmatrix}
         p & \\
         & 1
     \end{smallmatrix}\right])$. It follows that $\mathrm{Sh}_{\pi_p,\chi_p}^{H_p}(\gamma_{\lambda,0,0})$ is given by
        \begin{align}\delta_{\chi_p}^{\lambda}-\frac{p\ \delta_{\chi_p}\mathrm{Sch}_{\lambda-1}(\alpha^{\pi_p},\beta^{\pi_p})}{\#H_Z(\mathbf{F}_p)L^{H_p}\left(\pi_p\boxtimes \chi_p,\frac{1}{2}\right)}p^{-\frac{\lambda-1}{2}}-\sum_{m=0}^{\lambda-2}\delta_{\chi_p}^{\lambda-m}\frac{\mathrm{Sch}_m(\alpha^{\pi_p},\beta^{\pi_p})}{L^{H_p}\left(\pi_p\boxtimes \chi_p,\frac{1}{2}\right)}p^{-\frac{m}{2}}\label{eq:shloc}
        \end{align}
        where the sum on the right is taken to be zero by convention in the case where $\lambda=1$.
        We have thus completely expressed the local Shintani functions in terms of Satake parameters and $L$-factors. This in turn also allows us to write down an explicit formula for the universal Shintani function. Firstly, we note that the Schur polynomial, is invariant under the Weyl-group of $\GL_2$. It has the following alternative description
        \begin{align}\mathrm{Sch}_m(x,y)=\begin{cases}
            (x+y)^m+a_{m,2}(x+y)^{m-2}xy+\dots + a_{m,n-2}(xy)^{\frac{m-2}{2}}\ ,\ \text{ if } m\text{ is even}\\
            (x+y)^m+a_{m,2}(x+y)^{m-2}xy+\dots + a_{m,n-1}(x+y)(xy)^{\frac{m-1}{2}}\ ,\ \text{ if } m\text{ is odd}.
            \end{cases}\label{eq:sch}
            \end{align}
            with $a_{m,i}\in\mathbf{Z}$.
            It is clear that we need to define
            \begin{align}
                \mathrm{Sch}_m^\circ(X,Y):=p^{-\frac{m}{2}}\begin{cases}
                    (p^{-1/2}Y)^m+a_{m,2}(p^{-1/2}Y)^{m-2}X+\dots + a_{m,n-2}X^{\frac{m-2}{2}}\ ,\ \text{ if } m\text{ is even}\\
            (p^{-1/2}Y)^m+a_{m,2}(p^{-1/2}Y)^{m-2}X+\dots + a_{m,n-1}(p^{-1/2}Y)X^{\frac{m-1}{2}}\ ,\ \text{ if } m\text{ is odd}.
                \end{cases}
            \end{align}
            By a standard parity consideration, we see that the polynomial $\mathrm{Sch}_m^\circ(X,Y)$ is an element of $\mathbf{Z}[1/p][X,Y]$.
            It then follows from \Cref{thmunvsh} that $\mathrm{Sh}^{H_p,\mathrm{unv}}\left(\gamma_{\lambda,0,0}\right)^{'}$ is given by
            \begin{align}A_p^\lambda-\left(\frac{pA_p}{\#H_Z(\mathbf{F}_p)}\mathrm{Sch}_{\lambda-1}^\circ(\mathcal{S}_p,\mathcal{T}_p)-\sum_{m=0}^{\lambda-2}A_p^{\lambda-m}\mathrm{Sch}_{m}^\circ(\mathcal{S}_p,\mathcal{T}_p)\right)\mathcal{P}_p^{H}(p^{-1/2})
            \label{eq:shunv}
            \end{align}
            which is indeed an element of $\frac{1}{\#H_Z(\mathbf{F}_p)}\mathcal{R}_{\mathbf{Z}[1/p]}^{\mathfrak{G}_p}$ as shown in \Cref{thmunvsh} part $(3)$
            \subsubsection{Non-split $H$}\label{secHnonsplit} We now treat the case where $H$ is non-split. By the same considerations as in \Cref{secHsplit}, it is enough to obtain a formula for the value of $\mathrm{Sh}_{\pi_p,\chi_p}^{H_p}\left(s(\lambda),1\right)$ for $\lambda\in\mathbf{Z}_{\geq 0}$. A standard matrix computation using the first matrix identity in the proof of \Cref{lemlattices} shows that the subgroup $H_p^\circ(\lambda)$ stabilizes $s(\lambda)\cdot W_{\pi_p}^\mathrm{sph}$. For $\gamma\in H_p^\circ$, we write $\gamma s(\lambda)=p^{z_\gamma}\left[\begin{smallmatrix}
        p^{r_\gamma} & \\
        & 1
    \end{smallmatrix}\right]n_\gamma k_\gamma\in Z_pA_pN_p G_p^\circ$. Once again we idenitify $N_p$ with $\mathbf{Q}_p$ so we can talk about the $p$-adic valuation of an element of $N_p$. It follows from \Cref{defzetaavg} and the computation in \Cref{secHsplit} that
    \begin{align*}
       \mathcal{Z}_\mathrm{avg}^H\left((s(\lambda),1)\cdot (W_{\pi_p}^\mathrm{sph}\otimes 1),s+\frac{1}{2}\right)&=\int_{H_p^\circ}Z^{H_p}\left(h\cdot (s(\lambda)\cdot W_{\pi_p}^\mathrm{sph})\otimes \chi_p,s\right)\ dh\\
       &=\vol_{H_p}(H_p^\circ(\lambda))\sum_{\gamma\in H_p^\circ/H_p^\circ(\lambda)}\int_{\mathbf{Q}_p^\times}(\gamma s(\lambda)\cdot W_{\pi_p}^\mathrm{sph})(\left[\begin{smallmatrix}
           x & \\
           & 1
       \end{smallmatrix}\right])|x|_p^{s}\ d^\times x\\
       &=\scalemath{0.9}{\vol_{H_p}(H_p^\circ(\lambda))\sum_{\gamma\in H_p^\circ/H_p^\circ(\lambda)}(\alpha^{\pi_p}\beta^{\pi_p})^{z_\gamma} p^{r_\gamma s}\int_{\mathbf{Q}_p^\times} \psi(p^{v_p(n_\gamma)}x) W_{\pi_p}^\mathrm{sph}(\left[\begin{smallmatrix}
           x & \\
            & 1
       \end{smallmatrix}\right])|x|_p^s\ d^\times x.}
    \end{align*}
    It then follows that $\mathcal{Z}^{H_p}((s(\lambda),1)\cdot (W_{\pi_p}^\mathrm{sph}\otimes 1))$ is given by
    \begin{align}
        \vol_{H_p}(H_p^\circ(\lambda))\sum_{\gamma\in H_p^\circ/H_p^\circ(\lambda)}\left(\alpha^{\pi_p}\beta^{\pi_p}\right)^{z_\gamma}\left(1+\sum_{m=0}^{-v_p(n_\gamma)-1}\epsilon_{m,v_p(n_\gamma)}\frac{\mathrm{Sch}_m(\alpha^{\pi_p},\beta^{\pi_p})}{L^{H_p}(\pi_p\boxtimes \chi_p,\frac{1}{2})}p^{-\frac{m}{2}}\right)\label{eq:10}
    \end{align}
    where the sum is understood to be zero whenever $v_p(n_\gamma)\geq 0$, and $\epsilon_{m,i}:=\int_{p^m\mathbf{Z}_p^\times}\psi(p^{i} x)-1 d^\times x$.
    We now need the following technical lemma.
    \begin{lem}\label{lemtech}
       There exists a complete set of distinct coset representatives of $H_p^\circ/H_p^\circ(\lambda)$ that can be partitioned as follows:
       \begin{itemize}
          \item There are $\varphi(p^\lambda)$ elements for which $v_p(n_\gamma)=-\lambda$ and $z_\gamma=0$\\
          \item For each $1\leq i\leq \lambda-1$, there  are $\varphi(p^{\lambda-i})$ elements for which $v_p(n_\gamma)=i-\lambda$ and $z_\gamma= i$. Additionally, there are $\varphi(p^{\lambda-i})$ elements for which $v_p(n_\gamma)=i-\lambda$ and $z_\gamma= 0$.\\
          \item Finally, there's a unique element for which $n_\gamma=0$ and $z_\gamma=\lambda$, and there is a unique element for which $n_\gamma=0$ and $z_\gamma=0$.
       \end{itemize}
        \begin{proof}
            See Appendix A.
        \end{proof}
    \end{lem}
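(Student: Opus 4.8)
The plan is to pick an explicit complete set of coset representatives for $H_p^\circ/H_p^\circ(\lambda)$ and, for each representative $\gamma$, to compute the Iwasawa decomposition of $\gamma s(\lambda)$ by a direct $2\times 2$ manipulation, reading off $z_\gamma$ and $v_p(n_\gamma)$. First I would record the shape of the two groups: under $H\simeq\mathrm{Res}_{\mathcal{O}_E/\mathbf{Z}_p}\GL_1$ one has $H_p^\circ=\mathcal{O}_E^\times$, and the first matrix identity in the proof of \Cref{lemlattices}, namely $s(\lambda)^{-1}\left[\begin{smallmatrix}a&b\\-bD&a\end{smallmatrix}\right]s(\lambda)=\left[\begin{smallmatrix}a&bp^{-\lambda}\\bp^{\lambda}D&a\end{smallmatrix}\right]$, shows that $H_p^\circ(\lambda)=H_p^\circ\cap s(\lambda)G_p^\circ s(\lambda)^{-1}=\{a+b\sqrt{-D}:a\in\mathbf{Z}_p^\times,\ b\in p^{\lambda}\mathbf{Z}_p\}=\mathbf{Z}_p^\times(1+p^{\lambda}\mathcal{O}_E)$, which has index $\#H_Z(\mathbf{Z}/p^{\lambda}\mathbf{Z})=(p+1)p^{\lambda-1}$ in $H_p^\circ$; this already matches the total number of representatives claimed in the statement. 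I would also note that, since $H_p$ is abelian and $s(\lambda)^{-1}H_p^\circ(\lambda)s(\lambda)\subseteq G_p^\circ$, replacing $\gamma$ by $\gamma'\gamma$ with $\gamma'\in H_p^\circ(\lambda)$ changes $\gamma s(\lambda)$ only by right multiplication by an element of $G_p^\circ$, so the quantities $z_\gamma$ and $v_p(n_\gamma)$ are genuinely functions on the coset space and any convenient choice of representatives is allowed.

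For the representatives I would take $\gamma_b:=1+b\sqrt{-D}$ with $b$ running over a set of lifts of $\mathbf{Z}_p/p^{\lambda}\mathbf{Z}_p$, together with $\gamma'_a:=a+\sqrt{-D}$ with $a$ running over lifts of $p\mathbf{Z}_p/p^{\lambda}\mathbf{Z}_p$; reducing modulo $p^\lambda$ one checks these are pairwise inequivalent in $(\mathcal{O}_E/p^{\lambda})^\times/(\mathbf{Z}/p^{\lambda}\mathbf{Z})^\times$ and number $p^{\lambda}+p^{\lambda-1}=(p+1)p^{\lambda-1}$, hence form a complete set. For the Iwasawa step, writing $\gamma=\left[\begin{smallmatrix}a&b\\-bD&a\end{smallmatrix}\right]$ gives $\gamma s(\lambda)=\left[\begin{smallmatrix}ap^{\lambda}&b\\-bDp^{\lambda}&a\end{smallmatrix}\right]$; right-multiplying by a suitable element of $G_p^\circ$ chosen to clear the $(2,1)$-entry (possible because $\min(v_p(a),v_p(b))=0$ on $\mathcal{O}_E^\times$) brings this to an upper-triangular matrix $\left[\begin{smallmatrix}\alpha&\beta\\0&\delta\end{smallmatrix}\right]$, from which $z_\gamma=v_p(\delta)$, $r_\gamma$ is pinned down by $2z_\gamma+r_\gamma=v_p(\det(\gamma s(\lambda)))=\lambda$, and $n_\gamma$ is obtained by a routine normalisation of the unipotent part. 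Splitting on $v_p(b)$ for the $\gamma_b$ (the subcases $v_p(b)=0$; $v_p(b)=i$ with $1\le i\le\lambda-1$; $b\equiv 0$) and on $v_p(a)$ for the $\gamma'_a$ (the subcases $a\equiv 0$; $v_p(a)=i$ with $1\le i\le\lambda-1$) produces exactly the four advertised families: $b\in\mathbf{Z}_p^\times$ gives $(v_p(n_\gamma),z_\gamma)=(-\lambda,0)$ with $\varphi(p^{\lambda})$ representatives; $v_p(b)=i$ gives $(i-\lambda,0)$ with $\varphi(p^{\lambda-i})$ representatives; $v_p(a)=i$ gives $(i-\lambda,i)$ with $\varphi(p^{\lambda-i})$ representatives (using $\gamma'_a s(\lambda)\equiv\left[\begin{smallmatrix}p^{\lambda-i}&\mathrm{unit}\\0&p^{i}\end{smallmatrix}\right]$ modulo $G_p^\circ$); and the residual cases $\gamma_0=1$, for which $\gamma s(\lambda)=s(\lambda)$ gives $(0,0)$, and $\gamma'_0=\sqrt{-D}$, for which right multiplication by the Weyl element $\left[\begin{smallmatrix}0&-1\\1&0\end{smallmatrix}\right]\in G_p^\circ$ gives $(0,\lambda)$, account for the two singletons. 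A final tally $\varphi(p^{\lambda})+2\sum_{i=1}^{\lambda-1}\varphi(p^{\lambda-i})+2=(p+1)p^{\lambda-1}$ confirms that nothing is missing.

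The one genuinely delicate point is keeping the Iwasawa normalisation honest: the unipotent entry is only defined modulo a power of $p$ determined by the torus part, and up to units, so one must argue that in each subcase no cancellation raises $v_p(n_\gamma)$ to a non-negative value. This is exactly the place where the scalars $1+b^2D$ (for the $\gamma_b$) and $p^{2i}a'^2+D$ (for the $\gamma'_a$, with $a=p^i a'$) must be recognised as units --- which holds because $-D$ is a non-square modulo $p$ --- and this valuation is precisely the one entering $\psi(p^{v_p(n_\gamma)}x)$ in \eqref{eq:10}, so the case analysis should be carried out with that application in view. Once this is checked, matching the four counts against $(p+1)p^{\lambda-1}$ is immediate and finishes the proof; the degenerate case $\lambda=0$ is trivial (the identity is the only representative) and may be excluded.
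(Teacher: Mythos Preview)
Your proof is correct and follows essentially the same route as the paper's: an explicit list of $(p+1)p^{\lambda-1}$ representatives of the form $1+b\sqrt{-D}$ and $a+\sqrt{-D}$, followed by a case-by-case Iwasawa reduction of $\gamma s(\lambda)$ reading off $z_\gamma$ and $v_p(n_\gamma)$, with the unit checks on $1+b^2D$ and $p^{2i}a'^2+D$ handled via $-D$ being a non-square. The only cosmetic difference is that the paper lets $a$ range over all of $\mathbf{Z}/p^\lambda\mathbf{Z}$ and restricts $b$ to $p\mathbf{Z}/p^\lambda\mathbf{Z}$, whereas you swap these roles; the resulting partition into the four families is identical.
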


\noindent Applying \Cref{lemtech}, we see that \eqref{eq:10} divided by $\vol_{H_p}(H_p^\circ(\lambda))$, is given by 
\begin{align}
    \scalemath{0.9}{\varphi(p^\lambda)\left(1+ \sum_{m=0}^{\lambda-1}\epsilon_{m,-\lambda}\frac{\mathrm{Sch}_m(\alpha^{\pi_p},\beta^{\pi_p})}{L^{H_p}(\pi_p\boxtimes \chi_p,\frac{1}{2})}p^{-\frac{m}{2}}\right)+\sum_{i=1}^\lambda\varphi(p^{\lambda-i})\left(1+(\alpha^{\pi_p}\beta^{\pi_p})^i\right)\left(1+ \sum_{m=0}^{\lambda-i-1}\epsilon_{m,i-\lambda}\frac{\mathrm{Sch}_m(\alpha^{\pi_p},\beta^{\pi_p})}{L^{H_p}(\pi_p\boxtimes \chi_p,\frac{1}{2})}p^{-\frac{m}{2}}\right).} \label{eq:11}
\end{align}

\noindent Once again, for $i=\lambda$ the sum over $m$ is understood to be zero. For $-1\leq m\leq \lambda-2$, we define the polynomial
$$\mu_{m,\lambda}(T):=\varphi(p)\left(1-\mathrm{Sch}_{\lambda-m-2}(1,p^{-1})-\mathrm{Sch}_{\lambda-m-2}(1,p^{-1}T)\right)-p^{2+m-\lambda}\left(1+T^{\lambda-m-1}\right)\in\mathbf{Z}[1/p][T].$$
A lengthy but standard algebraic manipulation of \eqref{eq:11}, gives that $\#H_Z(\mathbf{F}_p)\mathrm{Sh}_{\pi_p,\chi_p}^{H_p}\left(s(\lambda),1\right)$ is given by 
\begin{align}
    -\mu_{-1,\lambda}(\alpha^{\pi_p}\beta^{\pi_p})-\frac{p\ \mathrm{Sch}_{\lambda-1}(\alpha^{\pi_p},\beta^{\pi_p})}{L^{H_p}(\pi_p\boxtimes \chi_p,\frac{1}{2})}p^{-\frac{\lambda-1}{2}}+\sum_{m=0}^{\lambda-2}\mu_{m,\lambda}(\alpha^{\pi_p}\beta^{\pi_p})\frac{\mathrm{Sch}_m(\alpha^{\pi_p},\beta^{\pi_p})}{L^{H_p}(\pi_p\boxtimes \chi_p,\frac{1}{2})}p^{-\frac{m}{2}}.
\end{align}
where once again, the sum on the right is understood to be zero if $\lambda=1$. From this, and \Cref{thmunvsh} we can also deduce that $p^{\lambda-1}\#H_Z(\mathbf{F}_p)\mathrm{Sh}^{H_p,\mathrm{unv}}\left(s(\lambda),1\right)^{'}$ is given explicitly by 
\begin{align}\label{eq: 16}
    -\mu_{-1,\lambda}(\mathcal{S}_p)-\left(p\ \mathrm{Sch}_{\lambda-1}^\circ(\mathcal{S}_p,\mathcal{T}_p) +\sum_{m=0}^{\lambda-2}\mu_{m,\lambda}(\mathcal{S}_p)\mathrm{Sch}_m^\circ(\mathcal{S}_p,\mathcal{T}_p   )     \right)\mathcal{P}_p^{H}(p^{-1/2})
\end{align}
and thus $\mathrm{Sh}^{H_p,\mathrm{unv}}(s(\lambda),1)$ is indeed an element of $\frac{1}{\#H_Z(\mathbf{F}_p)}\mathcal{R}_{\mathbf{Z}[1/p]}^{\mathfrak{G}_p}$, as shown in part $(3)$ of \Cref{thmunvsh}.\\
\\
\noindent Let $n_H$ be the identity matrix if $H$ is non-split, and the matrix $n_0$ if $H$ is split. The results of \Cref{secHsplit} and \Cref{secHnonsplit} result in the following.
\begin{thm}\label{thm intro expl loc sh}
    The local Shintani function attached to a pair $(\pi_p,\chi_p)$, is determined by the value of $\mathrm{Sh}_{\pi_p,\chi_p}^{H_p}(n_Hs(\lambda),1)$, which is explicitly given by the formula 
    $$\scalemath{0.95}{\begin{dcases*}
        \delta_{\chi_p}^{\lambda}-\frac{p\ \delta_{\chi_p}\mathrm{Sch}_{\lambda-1}(\alpha^{\pi_p},\beta^{\pi_p})}{\#H_Z(\mathbf{F}_p)L^{H_p}\left(\pi_p\boxtimes \chi_p,\frac{1}{2}\right)}p^{-\frac{\lambda-1}{2}}-\sum_{m=0}^{\lambda-2}\delta_{\chi_p}^{\lambda-m}\frac{\mathrm{Sch}_m(\alpha^{\pi_p},\beta^{\pi_p})}{L^{H_p}\left(\pi_p\boxtimes \chi_p,\frac{1}{2}\right)}p^{-\frac{m}{2}}\ ,\ H\text{ split}\\
        -\frac{\mu_{-1,\lambda}(\alpha^{\pi_p}\beta^{\pi_p})}{\#H_Z(\mathbf{F}_p)}-\frac{p\ \mathrm{Sch}_{\lambda-1}(\alpha^{\pi_p},\beta^{\pi_p})}{\#H_Z(\mathbf{F}_p)L^{H_p}(\pi_p\boxtimes \chi_p,\frac{1}{2})}p^{-\frac{\lambda-1}{2}}+\sum_{m=0}^{\lambda-2}\frac{\mu_{m,\lambda}(\alpha^{\pi_p}\beta^{\pi_p})}{\#H_Z(\mathbf{F}_p)}\frac{\mathrm{Sch}_m(\alpha^{\pi_p},\beta^{\pi_p})}{L^{H_p}(\pi_p\boxtimes \chi_p,\frac{1}{2})}p^{-\frac{m}{2}}\ ,\ H\text{ non-split.}
    \end{dcases*}}$$
\end{thm}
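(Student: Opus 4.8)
The plan is to reduce the theorem to evaluating the normalized period $\mathcal{Z}^{H_p}$ on a single translate of the spherical vector $W_{\pi_p}^{\mathrm{sph}}\otimes 1$ for each $\lambda\in\mathbf{Z}_{\geq 1}$, and then to compute that value by unfolding the zeta integrals of \Cref{seczeta} against the Casselman--Shalika/Shintani formula for the values of $W_{\pi_p}^{\mathrm{sph}}$ on the diagonal torus. Since $\mathrm{Sh}_{\pi_p,\chi_p}^{H_p}(g,h)=\mathcal{Z}^{H_p}((g,h)\cdot(W_{\pi_p}^{\mathrm{sph}}\otimes 1))$ with $\mathcal{Z}^{H_p}$ the normalized generator of $\Hom_{\mathfrak{H}_p}(\pi_p\boxtimes\chi_p,\mathbf{1})$ from \Cref{seczeta}, the disjoint double coset decompositions \Cref{Dns}, \Cref{Ds} and the fact that $\chi_p$ is unramified give $\mathrm{Sh}_{\pi_p,\chi_p}^{H_p}(\gamma_{\lambda,a})=\chi_p(p)^a\,\mathrm{Sh}_{\pi_p,\chi_p}^{H_p}(s(\lambda),1)$ in the non-split case and $\mathrm{Sh}_{\pi_p,\chi_p}^{H_p}(\gamma_{\lambda,a,b})=\chi_p(\left[\begin{smallmatrix} p^a & \\ & 1\end{smallmatrix}\right]p^b)\,\mathrm{Sh}_{\pi_p,\chi_p}^{H_p}(n_0s(\lambda),1)$ in the split case; the case $\lambda=0$ is the normalization $\mathrm{Sh}_{\pi_p,\chi_p}^{H_p}(1,1)=1$. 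So everything comes down to the single value $\mathrm{Sh}_{\pi_p,\chi_p}^{H_p}(n_Hs(\lambda),1)$, $\lambda\geq 1$.

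For split $H$, I would use $\mathfrak{H}_p$-equivariance to rewrite $\mathrm{Sh}_{\pi_p,\chi_p}^{H_p}(n_0s(\lambda),1)=\mathcal{Z}^{H_p}((n_\lambda,\left[\begin{smallmatrix} p^\lambda & \\ & 1\end{smallmatrix}\right])\cdot(W_{\pi_p}^{\mathrm{sph}}\otimes 1))$ with $n_\lambda=\left[\begin{smallmatrix} 1 & p^{-\lambda}\\ & 1\end{smallmatrix}\right]$, unfold $\mathcal{Z}^{H_p}$ as $\lim_{s\to 0}Z^{H_p}(-,s+\tfrac12)L^{H_p}(\pi_p\boxtimes\chi_p,s+\tfrac12)^{-1}$, and reduce to $\int_{\mathbf{Q}_p^\times}\psi(p^{-\lambda}x)W_{\pi_p}^{\mathrm{sph}}(\left[\begin{smallmatrix} x & \\ & 1\end{smallmatrix}\right])\chi_p(\left[\begin{smallmatrix} x & \\ & 1\end{smallmatrix}\right])^{-1}|x|_p^s\,d^\times x$. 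Feeding in the Shintani/Casselman--Shalika expression for $W_{\pi_p}^{\mathrm{sph}}$ on the diagonal torus in terms of $\mathrm{Sch}_m(\alpha^{\pi_p},\beta^{\pi_p})$, the additive twist $\psi(p^{-\lambda}x)$ truncates the resulting series at $m=\lambda-1$ and replaces the measure factors by $\int_{p^m\mathbf{Z}_p^\times}\psi(p^{-\lambda}x)\,d^\times x$; subtracting the $L$-factor and using orthogonality of characters to evaluate $\int_{p^m\mathbf{Z}_p^\times}\psi(p^{-\lambda}x)-1\,d^\times x$ (which equals $-p/(p-1)$ when $m=\lambda-1$ and $-1$ otherwise), then dividing through by $\#H_Z(\mathbf{F}_p)$ precisely where the factor $p/(p-1)$ appears, gives \eqref{eq:shloc} and hence the first branch of the formula upon letting $s\to 0$.

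For non-split $H$, $\mathcal{Z}^{H_p}$ is by \Cref{def linear form} the $s\to 0$ limit of $Z_{\mathrm{avg}}^{H_p}(-,s+\tfrac12)L^{H_p}(\pi_p\boxtimes\chi_p,s+\tfrac12)^{-1}$, so $\mathrm{Sh}_{\pi_p,\chi_p}^{H_p}(s(\lambda),1)$ unfolds to $\vol_{H_p}(H_p^\circ(\lambda))$ times a sum over $\gamma\in H_p^\circ/H_p^\circ(\lambda)$ of one-dimensional zeta integrals, using that $H_p^\circ(\lambda)$ stabilizes $s(\lambda)\cdot W_{\pi_p}^{\mathrm{sph}}$ (a matrix computation from the first identity in the proof of \Cref{lemlattices}). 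Writing each $\gamma s(\lambda)=p^{z_\gamma}\left[\begin{smallmatrix} p^{r_\gamma} & \\ & 1\end{smallmatrix}\right]n_\gamma k_\gamma$ in Iwasawa form and running the split-case integral computation term by term reduces the whole expression to \eqref{eq:10} and then to the combinatorics of the distribution of the pairs $(z_\gamma,v_p(n_\gamma))$ over $H_p^\circ/H_p^\circ(\lambda)$, which is exactly \Cref{lemtech}; substituting that count gives \eqref{eq:11}. A careful regrouping --- collecting, for each $m$, the coefficient of $\mathrm{Sch}_m(\alpha^{\pi_p},\beta^{\pi_p})p^{-m/2}/L^{H_p}(\pi_p\boxtimes\chi_p,\tfrac12)$ and recognizing it as the specialization $\mu_{m,\lambda}(\alpha^{\pi_p}\beta^{\pi_p})$ of the polynomial $\mu_{m,\lambda}$, together with the $m=-1$ contribution coming from the constant parts, which assembles into $-\mu_{-1,\lambda}(\alpha^{\pi_p}\beta^{\pi_p})$ --- and dividing by $\#H_Z(\mathbf{F}_p)$ produces the second branch.

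The hard part will be the non-split case: establishing \Cref{lemtech}, i.e. the exact count of Iwasawa data $(z_\gamma,v_p(n_\gamma))$ over $H_p^\circ/H_p^\circ(\lambda)$, and then executing the bookkeeping that collapses \eqref{eq:11} into the polynomials $\mu_{m,\lambda}$ without sign or indexing slips. The split case is comparatively clean: since $v_p\circ\det$ already detects membership in $H_p^\circ$ there is a single Iwasawa branch and no averaging, and the remaining ingredients --- the reduction to $n_Hs(\lambda)$, the Casselman--Shalika formula, and the character-orthogonality integrals --- are standard.
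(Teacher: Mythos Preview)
Your proposal is correct and follows essentially the same route as the paper: reduce to $n_Hs(\lambda)$ via the double coset decompositions, unfold $\mathcal{Z}^{H_p}$ against the Casselman--Shalika expression for $W_{\pi_p}^{\mathrm{sph}}$, evaluate the resulting $\int_{p^m\mathbf{Z}_p^\times}\psi(p^{-\lambda}x)-1\,d^\times x$ by orthogonality in the split case, and in the non-split case average over $H_p^\circ/H_p^\circ(\lambda)$, invoke \Cref{lemtech} for the Iwasawa data, and regroup into the $\mu_{m,\lambda}$ polynomials. You have also correctly located the genuine work in the non-split branch, namely \Cref{lemtech} and the algebraic bookkeeping passing from \eqref{eq:11} to the $\mu_{m,\lambda}$-formulation.
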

\begin{cor}\label{corl intro expl unv sh}
    The universal Shintani function, is determined by $\mathrm{Sh}^{H_p,\mathrm{unv}}\left(n_Hs(\lambda),1\right)^{'}$, which is explicitly given by the formula
  
$$\begin{dcases*}
            A_p^\lambda-\left(\frac{p\ \mathrm{Sch}_{\lambda-1}^\circ(\mathcal{S}_p,\mathcal{T}_p)}{\#H_Z(\mathbf{F}_p)}A_p-\sum_{m=0}^{\lambda-2}A_p^{\lambda-m}\mathrm{Sch}_{m}^\circ(\mathcal{S}_p,\mathcal{T}_p)\right)\mathcal{P}_p^{H}(p^{-1/2})\ ,\ H\text{ split}\\
            -\frac{\mu_{-1,\lambda}(\mathcal{S}_p)}{\#H_Z(\mathbf{F}_p)}-\left(\frac{p\ \mathrm{Sch}_{\lambda-1}^\circ(\mathcal{S}_p,\mathcal{T}_p)}{\#H_Z(\mathbf{F}_p)}\  +\sum_{m=0}^{\lambda-2}\frac{\mu_{m,\lambda}(\mathcal{S}_p)}{\#H_Z(\mathbf{F}_p)}\mathrm{Sch}_m^\circ(\mathcal{S}_p,\mathcal{T}_p   )     \right)\mathcal{P}_p^{H}(p^{-1/2})\ ,\ H\text{ non-split}.
        \end{dcases*}
        $$
\end{cor}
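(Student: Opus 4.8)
The plan is to read both displayed formulas off the explicit descriptions of the \emph{local} Shintani functions obtained in \Cref{secHsplit} and \Cref{secHnonsplit}, transported to the universal setting via the characterisation of \Cref{thmunvsh}. The first step is to reduce to the single family of values $\mathrm{Sh}^{H_p,\mathrm{unv}}(n_Hs(\lambda),1)'$. By \Cref{Dns} and \Cref{Ds} every class in $\mathfrak{H}_p\backslash\mathfrak{G}_p/\mathfrak{G}_p^\circ$ is represented by some $\gamma_{\lambda,a}$ (non-split) or $\gamma_{\lambda,a,b}$ (split), and from the proof of \Cref{thmCunivfree} together with \Cref{lemmap} the Hecke operator $\mathcal{Q}_{\gamma_{\lambda,a}}$ (resp. $\mathcal{Q}_{\gamma_{\lambda,a,b}}$) defining the corresponding value of $\mathrm{Sh}^{H_p,\mathrm{unv}}$ differs from $\mathcal{Q}_{(n_Hs(\lambda),1)}$ only by a monomial in $\mathcal{S}_p$ (resp. in $\mathcal{S}_p$ and $A_p$). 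So the universal Shintani function is recovered from its values on the representatives $(n_Hs(\lambda),1)$, exactly as \Cref{thm intro expl loc sh} does for the local functions.

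For those representatives, the split formula is precisely equation \eqref{eq:shunv} (observe $\gamma_{\lambda,0,0}=(n_0s(\lambda),1)$ and $n_H=n_0$ when $H$ is split), and the non-split formula is equation \eqref{eq: 16}, normalised by the elementary factor relating $\mathcal{Q}_{\gamma_{\lambda,0}}$ to $\phi_{s(\lambda)^{-1}}$ (here $n_H$ is the identity). Both of these were already derived in \Cref{secHsplit} and \Cref{secHnonsplit} from the zeta-integral expansions of \Cref{seczeta} — equations \eqref{eq:10}, \eqref{eq:11}, \eqref{eq:shloc} — together with the coset bookkeeping of \Cref{lemtech} in the non-split case and the combinatorial manipulation encoded in the polynomials $\mathrm{Sch}_m^\circ$ and $\mu_{m,\lambda}$. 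Hence, for the corollary itself, only the two routine steps remain: the reduction via \Cref{Dns}/\Cref{Ds}, and quoting \eqref{eq:shunv} and \eqref{eq: 16}.

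As a cleaner self-contained alternative (and as a consistency check) I would instead verify that the asserted element $F_\lambda\in\tfrac1{\#H_Z(\mathbf{F}_p)}\mathcal{R}^{H_p}_{\mathbf{Z}[1/p]}$ has the right specialisation directly: by \Cref{thmunvsh}(1) and the density of the family $\{\pi_p\boxtimes\chi_p\}$ in $\mathrm{Spec}(\mathcal{R}^{H_p})$, it suffices to check that $\Theta_{\pi_p\boxtimes\chi_p}(F_\lambda')$ agrees with the local value of \Cref{thm intro expl loc sh} for every admissible pair. Evaluating term by term, one applies the involution $(-)'$ of \Cref{GHM} and the dictionary $\Theta_{\pi_p}(\mathcal{S}_p)=\alpha^{\pi_p}\beta^{\pi_p}$, $\Theta_{\pi_p}(\mathcal{T}_p)=p^{1/2}(\alpha^{\pi_p}+\beta^{\pi_p})$ (recalled in \Cref{seczeta}); $\Theta_{\pi_p}(\mathrm{Sch}_m^\circ(\mathcal{S}_p,\mathcal{T}_p))=p^{-m/2}\mathrm{Sch}_m(\alpha^{\pi_p},\beta^{\pi_p})$, immediate from \eqref{eq:sch} and the definition of $\mathrm{Sch}_m^\circ$; $\Theta_{\pi_p\boxtimes\chi_p}(\mathcal{P}_p^{H,\mathrm{unv}}(p^{-1/2}))=L^{H_p}(\pi_p\boxtimes\chi_p,\tfrac12)^{-1}$; $\Theta_{\pi_p\boxtimes\chi_p}(A_p^{-1})=\delta_{\chi_p}$ in the split case; and $\mu_{m,\lambda}\in\mathbf{Z}[1/p][T]$, so $\Theta_{\pi_p}(\mu_{m,\lambda}(\mathcal{S}_p))=\mu_{m,\lambda}(\alpha^{\pi_p}\beta^{\pi_p})$. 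Collecting terms reproduces exactly the formula of \Cref{thm intro expl loc sh}, and the integrality claim is then \Cref{thmunvsh}(3), also visible from the fact that $\mathrm{Sch}_m^\circ$, $\mu_{m,\lambda}$, and $\mathcal{P}_p^{H,\mathrm{unv}}(p^{-1/2})$ have $\mathbf{Z}[1/p]$-coefficients. The genuine difficulty of the whole circle of results lies not in the corollary but in its inputs \eqref{eq:shloc} and \eqref{eq: 16}: in the split case the passage from the zeta-integral sum to closed form is short, but in the non-split case it is delicate, since the double index produced by \Cref{lemtech} must be rearranged against \eqref{eq:11} — which is precisely what the auxiliary polynomials $\mu_{m,\lambda}$ are built to make possible — and that computation is carried out in \Cref{secHnonsplit}.
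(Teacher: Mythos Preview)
Your proposal is correct and follows essentially the same route as the paper: the corollary simply collects the universal formulas \eqref{eq:shunv} and \eqref{eq: 16} already derived in \Cref{secHsplit} and \Cref{secHnonsplit}, and those in turn were obtained there precisely by your ``alternative'' method---computing the local Shintani values in \Cref{thm intro expl loc sh} and then invoking the interpolation property and density argument of \Cref{thmunvsh}(1) to lift to $\mathcal{R}^{H_p}$. The reduction to the representatives $(n_Hs(\lambda),1)$ via \Cref{Dns}, \Cref{Ds}, and the monomial shifts from \Cref{lemmap} is also how the paper organises the computation.
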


\begin{rem}
    Plugging $\lambda=1$ in \Cref{corl intro expl unv sh}, one can quickly calculate that
    \begin{align}\label{eq:optimal}\mathrm{Sh}^{H_p,\mathrm{unv}}(n_Hs(1),1)^{'}=\begin{dcases*}
        A_p -\frac{pA_p}{\#H_Z(\mathbf{F}_p)}\mathcal{P}_p^{H}(p^{-1/2})\ ,\ H\text{ split}\\
        \frac{1}{\#H_Z(\mathbf{F}_p)}\mathcal{T}_p\ ,\ H\text{ non-split}.
    \end{dcases*}\end{align}
    Thus part $(2)$ of \Cref{thmunvsh} is indeed optimal in the sense that there is no cancellation of the $\frac{1}{\#H_Z(\mathbf{F}_p)}$ factor.
    \end{rem}

    \section{Optimality of integral abstract norm-relations}\label{sec L_1^H}
    The goal of this section is to finish off the proof of \Cref{thmintro1}. It remains to show that the universal Hecke operators $\mathcal{P}_\xi^\mathrm{unv}$ attached to integral test data $\xi$ in the lattice $\mathcal{L}_1^{H_p}$ lie in the ideal $$\mathfrak{h}^{H_p}=\begin{dcases*}
\left\langle\#H_Z(\mathbf{F}_p),\mathcal{T}_p\right\rangle\ ,\  H\text{ non-split}\\
\left\langle\#H_Z(\mathbf{F}_p),{\mathcal{P}_p^{H}}'(p^{-1/2})\right\rangle\ ,\  H\ \text{split}
\end{dcases*}$$
of $\mathcal{R}_{\mathbf{Z}[1/p]}^{\mathfrak{G}_p}$.
\noindent The following technical lemma will be used for the non-split case. We provide a proof of this in the appendix.
\begin{lem}\label{lemideal}
    For any $\lambda\in\mathbf{Z}_{\geq 1}$, the characteristic function $\ch(G_p^\circ s(\lambda) G_p^\circ)$ is an element of the ideal 
    $\left(p+1,\mathcal{T}_p\right)\subseteq\mathcal{H}_{G_p}^\circ(\mathbf{Z}[1/p]).$
    \begin{proof}
       See \Cref{appendix lemma ideal}.
    \end{proof}
\end{lem}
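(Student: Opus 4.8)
\textbf{Proof strategy for \Cref{lemideal}.}

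The plan is to work entirely inside the spherical Hecke algebra $\mathcal{H}_{G_p}^\circ(\mathbf{Z}[1/p]) = \mathbf{Z}[1/p][\mathcal{T}_p, \mathcal{S}_p^{\pm 1}]$ and express $\ch(G_p^\circ s(\lambda) G_p^\circ)$ as a polynomial in $\mathcal{T}_p$ and $\mathcal{S}_p^{\pm 1}$, then read off its image in the quotient ring $\mathcal{H}_{G_p}^\circ(\mathbf{Z}[1/p])/(p+1, \mathcal{T}_p)$. Recall that $\ch(G_p^\circ s(\lambda) G_p^\circ)$ is (up to the normalization coming from $\mathcal{S}_p$) the classical Hecke operator $T_{p^\lambda}$, and the standard recursion $T_{p^\lambda} = T_p T_{p^{\lambda-1}} - p \mathcal{S}_p T_{p^{\lambda-2}}$ holds in $\mathcal{H}_{G_p}^\circ$, with $T_{p^0} = \ch(G_p^\circ)$ and $T_{p^1} = \mathcal{T}_p$ (after matching conventions with the normalization $\mathcal{T}_p = \ch(G_p^\circ\, \mathrm{diag}(p,1)\, G_p^\circ)$ and $\mathcal{S}_p = \ch(\mathrm{diag}(p,p) G_p^\circ)$). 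The key structural point is that every $T_{p^\lambda}$ with $\lambda \geq 1$ is a $\mathbf{Z}[1/p]$-linear combination of monomials $\mathcal{T}_p^j \mathcal{S}_p^k$ in which the power $j$ of $\mathcal{T}_p$ is at least $1$ when $\lambda$ is odd, and when $\lambda$ is even the unique $\mathcal{T}_p$-free term is $\pm p^{\lambda/2}\mathcal{S}_p^{\lambda/2}$ (this is visible from the recursion by an easy induction on $\lambda$).

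First I would set up the recursion and prove by induction on $\lambda \geq 1$ the precise claim: modulo the ideal $(\mathcal{T}_p) \subseteq \mathcal{H}_{G_p}^\circ(\mathbf{Z}[1/p])$ one has $T_{p^\lambda} \equiv 0$ for $\lambda$ odd and $T_{p^\lambda} \equiv (-p)^{\lambda/2}\mathcal{S}_p^{\lambda/2}$ for $\lambda$ even. The base cases $\lambda = 1$ (where $T_p = \mathcal{T}_p \equiv 0$) and $\lambda = 2$ (where $T_{p^2} = \mathcal{T}_p^2 - p\mathcal{S}_p\cdot\ch(G_p^\circ) \equiv -p\mathcal{S}_p$) are immediate, and the inductive step is a one-line substitution into $T_{p^\lambda} = \mathcal{T}_p T_{p^{\lambda-1}} - p\mathcal{S}_p T_{p^{\lambda-2}}$, noting that the first term dies modulo $(\mathcal{T}_p)$ and the second is handled by the inductive hypothesis for $\lambda - 2$. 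Then I would pass further to the quotient by $(p+1)$: since $p \equiv -1 \pmod{p+1}$, the surviving term $(-p)^{\lambda/2}\mathcal{S}_p^{\lambda/2}$ becomes $(-(-1))^{\lambda/2}\mathcal{S}_p^{\lambda/2} = \mathcal{S}_p^{\lambda/2}$ — which is a unit, hence generically nonzero. This shows the naive claim "$T_{p^\lambda} \in (p+1, \mathcal{T}_p)$" is \emph{false} for even $\lambda$ unless one is more careful, so the real content must be that $\ch(G_p^\circ s(\lambda) G_p^\circ)$ differs from $T_{p^\lambda}$ by a unit power of $\mathcal{S}_p$ AND that the relevant statement is really about $\ch(G_p^\circ s(\lambda) G_p^\circ)$ viewed in $\mathcal{R}^{H_p}_{\mathbf{Z}[1/p]}$ where $\mathcal{S}_p$ is identified (in the non-split case) with $X_p$; one must check whether the even-$\lambda$ obstruction genuinely lies in the ideal.

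The main obstacle is therefore the even-$\lambda$ case, and resolving it requires re-examining exactly which double coset operator $\ch(G_p^\circ s(\lambda) G_p^\circ)$ is and in which ring the ideal membership is asserted. I expect that the correct reading is: after using the relation $\mathcal{S}_p = X_p$ in $\mathcal{R}^{H_p}$ and tracking the normalization, the element $\ch(G_p^\circ s(\lambda) G_p^\circ)$ that appears in the proof of part (3) of \Cref{thmintro1} is the one attached to $\phi_{s(\lambda)}$ with a compensating factor, and that for even $\lambda$ the "constant" term $\mathcal{S}_p^{\lambda/2}$ is in fact cancelled against a contribution from the $\xi_{0,a}$-summands of $\mathcal{L}_1^{H_p,\mathrm{unv}}$ (which carry the extra factor $\#H_Z(\mathbf{F}_p) = p+1$). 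Concretely, I would: (i) write $\ch(G_p^\circ s(\lambda) G_p^\circ) = \sum_{j\geq 1} c_j \mathcal{T}_p^j \mathcal{S}_p^{k_j} + \varepsilon_\lambda\, p^{\lambda/2}\mathcal{S}_p^{\lambda/2}$ with $\varepsilon_\lambda = (-1)^{\lambda/2}$ when $\lambda$ is even and no constant term when $\lambda$ is odd; (ii) observe $p^{\lambda/2} = p\cdot p^{\lambda/2 - 1} \equiv 0 \pmod{?}$ — no, rather observe that $p^{\lambda/2}\mathcal{S}_p^{\lambda/2}$, while not in $(\mathcal{T}_p)$, does lie in the ideal generated by $p$, hence after the identification and using that $p+1$ and $p$ generate the same ideal as... — here I would need the precise claim. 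Given the lemma as literally stated, the cleanest route is: the term with no $\mathcal{T}_p$ has coefficient divisible by $p^{\lambda/2}$, and $p^{\lambda/2} \in (p) \subseteq \mathcal{H}_{G_p}^\circ(\mathbf{Z}[1/p])$, while $p = (p+1) - 1$; but $1 \notin (p+1,\mathcal{T}_p)$, so this does not close. I therefore expect the actual proof in the appendix exploits that $\lambda \geq 1$ together with a sharper identity specific to $\GL_2$ — perhaps $T_{p^\lambda} \equiv \mathcal{T}_p \cdot (\text{something}) \pmod{(p+1)}$ using $p^m \equiv (-1)^m$ and a telescoping of the Schur-polynomial expression $T_{p^\lambda} = \mathcal{S}_p^{\lambda/2}\mathrm{Sch}_\lambda(\alpha,\beta)$ with $\alpha\beta = \mathcal{S}_p$ — and I would pursue that: expand $\mathrm{Sch}_\lambda$ in terms of $\mathcal{T}_p = p^{1/2}(\alpha+\beta)$ and $\mathcal{S}_p = \alpha\beta$ via \eqref{eq:sch}, observe that modulo $p+1$ every power $p^i$ collapses to $\pm 1$, and check that the resulting integer polynomial in $\mathcal{T}_p$ has zero constant term precisely because $\mathrm{Sch}_\lambda(1,-1) = 0$ for $\lambda \geq 1$ odd and equals $(-1)^{\lambda/2}$ for $\lambda$ even in a way that gets absorbed. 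The honest summary: the inductive skeleton is routine, and the one genuinely delicate point — the even-$\lambda$ constant term and its membership in $(p+1,\mathcal{T}_p)$ — is what the appendix proof must and does handle, most likely by the Schur-polynomial identity $\mathrm{Sch}_\lambda(1,-1) = 0$ for odd $\lambda$ combined with absorbing $p^{\lambda/2}\mathcal{S}_p^{\lambda/2} \equiv (\pm 1)\mathcal{S}_p^{\lambda/2} \cdot$ (a unit) into $\mathcal{T}_p \cdot (\ldots) + (p+1)(\ldots)$ after multiplying through by a suitable unit, and this is the step I would write out in full detail.
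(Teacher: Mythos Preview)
Your proposal has a genuine gap: you conflate $\ch(G_p^\circ s(\lambda) G_p^\circ)$ with the classical operator $T_{p^\lambda}=\mathcal{T}(p^\lambda)$. These are \emph{not} the same for $\lambda\geq 2$. By the Cartan decomposition, $\mathcal{T}(p^\lambda):=\ch\{M\in\mathrm{Mat}_2(\mathbf{Z}_p): v_p(\det M)=\lambda\}$ breaks up as $\sum_{0\leq j\leq i,\ i+j=\lambda}\ch\bigl(G_p^\circ\,\mathrm{diag}(p^i,p^j)\,G_p^\circ\bigr)$, and $\phi_{s(\lambda)}=\ch(G_p^\circ s(\lambda) G_p^\circ)$ is only the $j=0$ summand. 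The recursion $\mathcal{T}(p^\lambda)=\mathcal{T}_p\,\mathcal{T}(p^{\lambda-1})-p\,\mathcal{S}_p\,\mathcal{T}(p^{\lambda-2})$ holds for $\mathcal{T}(p^\lambda)$, not for $\phi_{s(\lambda)}$. Your computation $\mathcal{T}(p^\lambda)\equiv(-p)^{\lambda/2}\mathcal{S}_p^{\lambda/2}\pmod{\mathcal{T}_p}$ for even $\lambda$ is correct, and you are right that this is not in $(p+1,\mathcal{T}_p)$ --- but you are testing the wrong element.

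The fix, and this is exactly what the paper's appendix does, is to write
\[
\phi_{s(\lambda)}=\mathcal{T}(p^\lambda)-\sum_{\substack{0<j\leq i<\lambda\\ i+j=\lambda}}\mathcal{S}_p^{\,j}\,\phi_{s(i-j)}.
\]
Already at $\lambda=2$ this gives $\phi_{s(2)}=\mathcal{T}(p^2)-\mathcal{S}_p=(\mathcal{T}_p^2-p\,\mathcal{S}_p)-\mathcal{S}_p=\mathcal{T}_p^2-(p+1)\mathcal{S}_p$, which lies in the ideal. For general $\lambda$ one substitutes the recursion for $\mathcal{T}(p^\lambda)$ and applies induction: the $\mathcal{T}_p\,\mathcal{T}(p^{\lambda-1})$ piece is in $(\mathcal{T}_p)$; every summand $\mathcal{S}_p^{\,j}\phi_{s(i-j)}$ with $i-j\geq 1$ is in the ideal by the inductive hypothesis; and when $\lambda$ is even, the two remaining ``constant'' pieces --- a $p\,\mathcal{S}_p^{\lambda/2}$ coming from the $i=j$ term inside $p\,\mathcal{S}_p\,\mathcal{T}(p^{\lambda-2})$, and the $\mathcal{S}_p^{\lambda/2}$ coming from the $i=j$ correction term --- combine to $(p+1)\mathcal{S}_p^{\lambda/2}$. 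So the even-$\lambda$ ``obstruction'' you found is precisely cancelled by the difference between $\phi_{s(\lambda)}$ and $\mathcal{T}(p^\lambda)$. None of your speculation about passing to $\mathcal{R}^{H_p}$, identifying $\mathcal{S}_p$ with $X_p$, or invoking $\mathcal{L}_1^{H_p,\mathrm{unv}}$ is needed; the lemma is a statement purely in $\mathcal{H}_{G_p}^\circ(\mathbf{Z}[1/p])$.
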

\begin{thm}\label{thm L1 7}
   Let $\xi_0$ be the characteristic function of $\mathfrak{H}_p\mathfrak{G}_p^\circ$. The following are true:
 \begin{enumerate}
 \item For $\xi\in$ any integral vector of level $\mathfrak{G}(\mathbf{Z}_p)$, we have 
 $$\mathfrak{z}(\xi)=\mathcal{P}_\xi^\mathrm{unv}\cdot \mathfrak{z}(\xi_0)\ \ \ \mathrm{with}\ \ \mathcal{P}_\xi^\mathrm{unv}\in \mathcal{R}_{\mathbf{Z}[1/p]}^{\mathfrak{G}_p}.$$
 \item
      For $\xi_1\in C_c^\infty(\mathfrak{H}(\mathbf{Q}_p)\backslash \mathfrak{G}(\mathbf{Q}_p)/\mathfrak{G}(\mathbf{Z}_p)[p],\mathbf{Z}[1/p])$ any integral vector of level $\mathfrak{G}(\mathbf{Z}_p)[p]$, we have
     $$\mathrm{norm}^{\mathfrak{G}(\mathbf{Z}_p)[p]}_{\mathfrak{G}(\mathbf{Z}_p)}\mathfrak{z}(\xi_1)=\mathcal{P}_{\mathrm{Tr}(\xi_1)}^\mathrm{unv}\cdot\mathfrak{z}(\xi_0)\ \ \mathrm{with}\ \  
     \mathcal{P}_{\mathrm{Tr}(\xi_1)}^\mathrm{unv}\in\mathfrak{h}^{H_p}.$$
    \end{enumerate}
\begin{proof}
    By \Cref{thmL1} it only remains to prove that $\mathcal{P}_{\xi}^{\mathrm{unv}}\in\mathfrak{h}^{H_p}$ for any $\xi\in\mathcal{L}_1^{H_p}.$ By freeness, it suffices in both cases to prove this on characteristic functions. If $H$ is non-split (resp. split) and
    $\xi\in\mathcal{L}_1^{H_p}$ is of the form $\#H_Z(\mathbf{F}_p)\xi_{0,a}$ (resp. $\#H_Z(\mathbf{F}_p)\xi_{0,a,b}$) then there is nothing to prove, since the universal Hecke operator in that case is simply given by $\#H_Z(\mathbf{F}_p)\mathcal{S}_p^a$ (resp. $\#H_Z(\mathbf{F}_p)A_p^a\mathcal{S}_p^b$), which are already elements of the ideals in question. Thus, we may suppose we are working with a characteristic function of the form $\xi_{\lambda,a}$ (resp. $\xi_{\lambda,a,b}$) for $\lambda\geq 1$. We firstly deal with the case where $H$ is non-split. We know from \Cref{thmCunivfree} that $\xi_{\lambda,a}$ is given by $A_p^{-a}\phi_{s(\lambda)^{-1}}* \xi_{0,0}$. Thus the result follows at once from \Cref{lemideal}. 
    Thus, from now on, we assume that $H$ is split. As we've seen in the proof of \Cref{thmCunivfree}, if $H$ is split, then we do not have the same luxury, and it is combinatorially extremely hard to write down $\mathcal{P}_{\xi_{\lambda,a,b}}^{H,\mathrm{unv}}$ directly using the recurrence relation of \Cref{thmCunivfree}. Instead, we can argue as follows. By definition, we have $\xi_{\lambda,a,b}=\mathcal{P}_{\xi_{\lambda,a,b}}^{H,\mathrm{unv}}*\xi_{0,0,0}$. Applying the Hecke equivariant map $i$, and using \Cref{lemmap}, this is equal to 
    \begin{align*}\mathcal{P}_{\xi_{\lambda,a,b}}^{H,\mathrm{unv}}*[\ch(\mathfrak{G}_p^\circ)]&=\#H_Z(\mathbf{F}_p)p^{\lambda-1}A_p^{-a}\mathcal{S}_p^{-b}*[\ch((n_0s(\lambda),1)\mathfrak{G}_p^\circ)]\\
    &=\#H_Z(\mathbf{F}_p)p^{\lambda-1}A_p^{-a}\mathcal{S}_p^{-b}\mathrm{Sh}^{H_p,\mathrm{unv}}(n_0s(\lambda),1)^{'}*[\ch(\mathfrak{G}_p^\circ)]
    \end{align*}
    in $\mathbf{C}[\mathfrak{G}_p/\mathfrak{G}_p^\circ]_{\mathfrak{H}_p}$ . Thus, by \Cref{cor Co and I free} 
    $$\frac{A_p^{a} \mathcal{S}_p^{b}}{p^{\lambda-1}}\mathcal{P}_{\xi_{\lambda,a,b}}^{H,\mathrm{unv}^{'}}=\#H_Z(\mathbf{F}_p)\mathrm{Sh}^{H_p,\mathrm{unv}}(n_0s(\lambda),1)^{'}\in\mathcal{R}_{\mathbf{Z}[1/p]}^{\mathfrak{G}_p}.$$
    By \Cref{thmunvsh} for the split case, the right hand side of the above equality is equal to $$\#H_Z(\mathbf{F}_p)A_p^\lambda-\left(p\ \mathrm{Sch}_{\lambda-1}^\circ(\mathcal{S}_p,\mathcal{T}_p)A_p-\#H_Z(\mathbf{F}_p)\sum_{m=0}^{\lambda-2}A_p^{\lambda-m}\mathrm{Sch}_m^\circ(\mathcal{S}_p,\mathcal{T}_p)\right)\mathcal{P}_p^{H}(p^{-1/2}).$$
    This shows that every such Hecke operator is contained in the ideal in question. The fact that all generators are attainable follows from \eqref{eq:optimal}.
\end{proof}
\end{thm}
\begin{rem}\label{rem non-split ideal h}
    We note that in the non-split case the ideal $\left\langle\#H_Z(\mathbf{F}_p),\mathcal{T}_p\right\rangle$ is strictly contained in the ideal $I:=\langle\#H_Z(\mathbf{F}_p),p+\mathcal{S}_p',{\mathcal{P}_p^{H}}'(p^{-1/2})\rangle$ of $\mathcal{R}_{\mathbf{Z}[1/p]}^{\mathfrak{G}_p}$, where $\mathcal{P}_p^{H}(X)$ is as in \Cref{lem euler factor}. Of course, for trivial central character these two coincide with the ideal $\langle\#H_Z(\mathbf{F}_p),{\mathcal{P}_p^{H}}'(p^{-1/2})\rangle$ in the corresponding quotient of the Hecke algebra, which is of the same shape as the split case. But for arbitrary central character the larger ideal $I$ is not optimal, whereas $\mathfrak{h}^{H_p}$ is.
\end{rem}
\section{Toric periods for modular forms}\label{sec global}
We fix once and for all a prime $\ell$ and an isomorphism $\iota:\mathbf{C}\simeq\overline{\mathbf{Q}}_\ell$. Let $E=\mathbf{Q}(\sqrt{-M})$ be a quadratic field and $\mathscr{H}$ be the globally non-split torus over $\mathbf{Q}$ given by $\mathrm{Res}_{E/\mathbf{Q}}\GL_1$. We regard $\mathscr{H}$ as a subgroup of $\GL_2$ by $$a+b\sqrt{-M}\mapsto\left[\begin{smallmatrix}
    a & b\\
    -bM & a
\end{smallmatrix}\right].$$
Then for each prime $p\nmid M$, $\mathscr{H}$ is an unramified maximal torus of $\GL_2$ over $\mathbf{Q}_p$ as in \Cref{GHM}. We set $\mathscr{G}:=\GL_2\times \mathscr{H}$ and we regard $\mathscr{H}$ as a subgroup of $\mathscr{G}$ under the usual embedding $h\mapsto(h,h^{-1})$.
\subsection{Automorphic modular forms}\label{sec mod forms.}
     L $f\in S_k(\Gamma_1(N),\varepsilon)$ be a normalized cuspidal eigenform of even integral weight and write $\pi_f=\otimes_p^{'}\pi_{f,p}$ for the irreducible cuspidal automorphic representation of $\GL_2$ attached to $f$ (\cite{gelbart2006automorphic}). Let $S$ be a finite set of places containing $\{\infty,p|2\ell NM\}$ and $\chi_f^S$ be an unramified character of $\mathscr{H}(\mathbf{A}^S)$ taking values in a number field, and whose central character agrees with the $S$-finite part of the adelization of $\varepsilon$. By Waldspurger's multiplicity one theorem for toric periods (\Cref{mult1}), and the same argument as in \cite{harris2001note}, there exists a unique non-zero period
$$\mathcal{Z}_f\in\mathrm{Hom}_{\mathscr{H}(\mathbf{A}^S)}(\pi_f^S\boxtimes\chi_f^S,\mathbf{1})$$
normalized to send the canonical spherical vector $W_{\pi_f^S\boxtimes\chi_f^S}^\mathrm{sph}$ to $1$. Let $L_f$ be the composite of the number field of $f$ \cite[\S $6.5$]{diamond2005first} and the number field of $\chi_f^S$. Let $\mathbf{L}_f/\mathbf{Q}_\ell$ be the smallest $\ell$-adic number field containing the image of $L_f$ under $\iota.$

     Recall the $\mathcal{O}_{\mathbf{L}_f}$-module $\mathcal{L}_{\pi_{f,p}\boxtimes \chi_{f,p}}^{H_p}(\mathcal{O}_{\mathbf{L}_f})$ defined in \Cref{sec int structures} for each prime $p\notin S$.
     \begin{thm}\label{thm appl mod forms}
        Let $p\notin S$ be a prime. Then,
         $$\mathrm{rank}_{\mathcal{O}_{\mathbf{L}_f}}\ \mathcal{L}_{\pi_{f,p}\boxtimes \chi_{f,p}}^{H_p}(\mathcal{O}_{\mathbf{L}_f})=1$$
         \begin{proof}
             As in \cite{loeffler2012computation}, each such local factor $\pi_{f,p}$ is given by the irreducible unramified principal series $\mathcal{B}(\sigma_{p,1},\sigma_{p,2})$ with $\sigma_{p,1}(p)=\alpha_pp^{\frac{1-k}{2}}$ and $\sigma_{p,2}(p)=\beta_pp^{\frac{1-k}{2}}$ where $\alpha_p$ and $\beta_p$ are the roots of the Hecke polynomial at $p$:
        $$X^2-a_p(f)X+\varepsilon(p)p^{k-1}.$$
        Hence by construction, and the fact that $p\in\mathcal{O}_{\mathbf{L}_f}^\times$, $a_p(f)\in \mathcal{O}_{\mathbf{L}_f}$ and $k$ is even, we see that the spherical Hecke eigensystem of $\pi_{f,p}\boxtimes \chi_{f,p}$ restricts to a morphism 
        $\mathcal{R}_{\mathcal{O}_{\mathbf{L}_f}}^{H_p}\rightarrow\mathcal{O}_{\mathbf{L}_f}.$ The result now follows from \Cref{integral mult one}.
         \end{proof}
     \end{thm}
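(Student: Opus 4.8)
The plan is to reduce the global rank statement to the local structure results already established in \Cref{sec int structures}, using only that each local component $\pi_{f,p}\boxtimes\chi_{f,p}$ of $\Pi_f\boxtimes\chi_f$ is unramified, irreducible, of Whittaker type, and has a spherical Hecke eigensystem valued in $\mathcal{O}_{\mathbf{L}_f}$. First I would recall, as in \cite{loeffler2012computation}, the explicit description of the local factor $\pi_{f,p}=\mathcal{B}(\sigma_{p,1},\sigma_{p,2})$ with Satake parameters $\alpha_p p^{(1-k)/2}$ and $\beta_p p^{(1-k)/2}$, where $\alpha_p,\beta_p$ are the roots of $X^2-a_p(f)X+\varepsilon(p)p^{k-1}$. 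In particular $\pi_{f,p}$ is irreducible (since $|\alpha_p/\beta_p|\neq p^{\pm 1}$ by Ramanujan, or more elementarily because $f$ has level prime to $p$, so $\pi_{f,p}$ is a spherical unitary principal series) and hence non-degenerate and of Whittaker type, and $\chi_{f,p}$ is an unramified quasi-character of $H_p$ with $\chi_{f,p}|_{\mathbf{Q}_p^\times}=\omega_{\pi_{f,p}}$ by the condition $\chi_{f,p}|_{\mathbf{Q}_p^\times}(p)=\varepsilon(p)$ together with the central character computation.

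The main point to verify is the integrality hypothesis needed to invoke \Cref{thm integral co} and \Cref{ cor integral co}: namely that the $\overline{\mathbf{Q}}_\ell$-valued spherical Hecke eigensystem $\Theta_{\pi_{f,p}\boxtimes\chi_{f,p}}$ restricts to a ring morphism $\mathcal{R}^{H_p}_{\mathcal{O}_{\mathbf{L}_f}}\to\mathcal{O}_{\mathbf{L}_f}$. For this I would use the explicit generators of $\mathcal{R}^{H_p}$. On $\mathcal{S}_p$ the eigenvalue is $\alpha^{\pi_{f,p}}\beta^{\pi_{f,p}}=\varepsilon(p)p^{k-1}\cdot p^{1-k}=\varepsilon(p)$, a root of unity, hence a unit in $\mathcal{O}_{\mathbf{L}_f}$; on $\mathcal{T}_p$ the eigenvalue is $p^{1/2}(\alpha^{\pi_{f,p}}+\beta^{\pi_{f,p}})=p^{1/2}\cdot p^{(1-k)/2}a_p(f)=p^{(2-k)/2}a_p(f)$, which lies in $\mathcal{O}_{\mathbf{L}_f}$ because $a_p(f)\in L_f\subseteq\mathbf{L}_f$ is an algebraic integer and $p$ is a unit in $\mathcal{O}_{\mathbf{L}_f}$ (here one uses crucially that $k$ is even so $p^{(2-k)/2}$ is an integral power of $p$, and that $p\nmid\ell$ so $p\in\mathcal{O}_{\mathbf{L}_f}^\times$). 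In the split case one additionally checks the eigenvalue on $A_p$, which is $\chi_{f,p}\!\left(\left[\begin{smallmatrix}p&\\&1\end{smallmatrix}\right]\right)^{-1}\in\mathcal{O}_{\mathbf{L}_f}^\times$ since $\chi_{f,p}$ is $\mathcal{O}_{\mathbf{L}_f}^\times$-valued. Since $\mathcal{R}^{H_p}_{\mathcal{O}_{\mathbf{L}_f}}$ is generated as an $\mathcal{O}_{\mathbf{L}_f}$-algebra by $\mathcal{T}_p,\mathcal{S}_p^{\pm 1}$ (and $A_p^{\pm 1}$ in the split case), this proves the eigensystem lands in $\mathcal{O}_{\mathbf{L}_f}$.

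With the integrality hypothesis in hand, the two rank equalities are immediate: \Cref{thm integral co} gives $\mathrm{rank}_{\mathcal{O}_{\mathbf{L}_f}}\mathcal{L}_{f,p}^{H_p,\mathrm{unv}}=1$ directly, and \Cref{ cor integral co} gives $\mathrm{rank}_{\mathcal{O}_{\mathbf{L}_f}}\Hom_{\mathcal{O}_{\mathbf{L}_f}[\mathfrak{H}_p]}\!\left(\mathcal{H}_{f,p}^{H_p,\mathrm{unv}},\mathcal{O}_{\mathbf{L}_f}\right)=1$, finishing the proof. The only genuine content is the integrality bookkeeping of the previous paragraph; I do not expect any real obstacle, but the point deserving care is confirming that the normalization of $\pi_f$ from \cite{loeffler2012computation} really produces the unitary (arithmetic) Satake parameters $\alpha_p p^{(1-k)/2},\beta_p p^{(1-k)/2}$ rather than the ``classical'' $\alpha_p,\beta_p$, since it is precisely the $p$-power twist — combined with $k$ even and $p\in\mathcal{O}_{\mathbf{L}_f}^\times$ — that makes the $\mathcal{T}_p$-eigenvalue integral.
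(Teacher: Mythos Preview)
Your proposal is correct and follows exactly the same strategy as the paper: identify the Satake parameters of $\pi_{f,p}$ via \cite{loeffler2012computation}, verify that the spherical Hecke eigensystem of $\pi_{f,p}\boxtimes\chi_{f,p}$ lands in $\mathcal{O}_{\mathbf{L}_f}$ (using $p\in\mathcal{O}_{\mathbf{L}_f}^\times$ and $k$ even), and then invoke \Cref{thm integral co} and \Cref{ cor integral co}. The only difference is that you spell out the eigenvalue computations on $\mathcal{S}_p$, $\mathcal{T}_p$, and $A_p$ explicitly, whereas the paper compresses this into the phrase ``by construction''; your expanded version is a faithful unpacking of that sentence (and the value of $\Theta$ on $A_p^{\pm1}$ is a unit either way, so the precise sign is immaterial).
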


    For a finite set of primes $S^{'}$ disjoint from $S$, we write $$\mathscr{G}(\hat{\mathbf{Z}}^S)[S^{'}]:=\left\{\prod_{p\in S^{'}} \mathscr{G}(\mathbf{Z}_p)[p]\right\}\times\left\{\prod_{p\notin S\cup S^{'}}\mathscr{G}(\mathbf{Z}_p)\right\}$$ 
    where the open compact level subgroup $\mathscr{G}(\mathbf{Z}_p)[p]$ is that of \Cref{def level subgroup}. Finally, we fix the Haar measure $\vol_{\mathscr{H}(\mathbf{A}^S)}:=\prod_{p\not\in S}\vol_{\mathscr{H}(\mathbf{Q}_p)}$ where each $\vol_{\mathscr{H}(\mathbf{Q}_p)}$ is the normalized Haar measure on $\mathscr{H}(\mathbf{Q}_p)$ giving $\mathscr{H}(\mathbf{Z}_p)$ volune $1$.
\begin{thm}\label{thm integr of toric periods} Let $f\in S_k(\Gamma_1(N),\varepsilon)$ be a normalized cuspidal eigenform of even integral weight. Then the following are true:
   \begin{enumerate}\item For any $g\in\mathscr{G}(\mathbf{A}^S)$ and any $C_g\in \vol_{\mathscr{H}(\mathbf{A}^S)}(\mathscr{H}(\mathbf{A}^S)\cap g\mathscr{G}(\hat{\mathbf{Z}}^S)g^{-1})^{-1}\cdot \mathcal{O}_{\mathbf{L}_f}$, the linear form $\mathcal{Z}_f$ satisfies 
   $$C_g\mathcal{Z}(g\cdot W_{\pi_f^S\boxtimes\chi_f^S}^\mathrm{sph})\in\mathcal{O}_{\mathbf{L}_f}.$$
    \item If moreover $C_g\in \mathrm{vol}_{\mathscr{H}(\mathbf{A}^S)}(\mathscr{H}(\mathbf{A}^S)\cap g \mathscr{G}(\hat{\mathbf{Z}}^S)[S']g^{-1})\cdot\mathcal{O}_{L_f}$ for a finite set of primes $S'$ disjoint from $S$, then the linear form $\mathcal{Z}_f$ satisfies
    $$C_g\mathcal{Z}_f(g\cdot W_{\pi_f^S\boxtimes\chi_f^S}^\mathrm{sph})\in\left\{\prod_{\substack{p\in S'\\ \mathrm{inert}}}\left\langle p+1, a_p(f)\right\rangle\right\}\times\left\{ \prod_{\substack{p\in S'\\ \mathrm{split}}}\left\langle p-1, L_p^{-1}\left(f;\chi_{f,p}(\left[\begin{smallmatrix}
        1/p & \\
        & 1
    \end{smallmatrix}\right])\right) \right\rangle\right\}\subseteq \mathcal{O}_{\mathbf{L}_f}$$
    where $a_p(f)$ is the $p$-th Fourier coefficient of $f$, and $L_p^{-1}(f;X):=1-a_p(f)p^{-\tfrac{k}{2}}X+\varepsilon(p)p^{-1}X^2\in\mathcal{O}_{\mathbf{L}_f}[X].$
    \end{enumerate}
    \begin{proof}
        The period $\mathcal{Z}_f$ can be realized as a product of local periods $\prod_{p\notin S}\mathcal{Z}^{\mathscr{H}(\mathbf{Q}_p)}$ where at each prime the period $\mathcal{Z}^{\mathscr{H}(\mathbf{Q}_p)}$ is that of \Cref{def linear form} depending on whether $\mathscr{H}$ is split or non-split over $\mathbf{Q}_p$. The first part then follows from the description of the local principal-series given in \Cref{thm appl mod forms}, and \Cref{integral mult one}. For the second part, it suffices to prove it for $S'=\{p\}$ is a single prime. Using the first part, it is enough to show that \begin{align}\label{eq: expression}\vol_{\mathscr{H}(\mathbf{Q}_p)}(\mathscr{H}(\mathbf{Q}_p)\cap g_p\mathscr{G}(\mathbf{Z}_p)[p]g_p^{-1})^{-1}\cdot\mathcal{Z}^{\mathscr{H}(\mathbf{Q}_p)}(g_p\cdot W_{\pi_{f,p}\boxtimes\chi_{f,p}}^\mathrm{sph})\end{align}
        lies in the $p$-part of the ideal present in the second part of the theorem. By \Cref{proptech}, expression \eqref{eq: expression} is equal to $\Theta_{\pi_{f,p}\boxtimes\chi_{f,p}}(\mathcal{P}_{\xi_p}')$ where 
        $$\xi_p:=\vol_{\mathscr{H}(\mathbf{Q}_p)}(\mathscr{H}(\mathbf{Q}_p)\cap g_p\mathscr{G}(\mathbf{Z}_p)[p]g_p^{-1})^{-1}\cdot \ch(g_p \mathscr{G}(\mathbf{Z}_p)).$$
        But by \Cref{lemlattices} such an element is the image of an element in $\mathcal{L}^{\mathscr{H}(\mathbf{Q}_p)}_1$. Thus by \Cref{thm L1 7}, $\mathcal{P}_{\xi_p}\in\mathfrak{h}^{\mathscr{H}(\mathbf{Q}_p)}$ where the latter is the ideal given in \Cref{sec L_1^H}. The result then follows at once using well-known expressions for spherical Hecke eigenvalues.
    \end{proof}
\end{thm}

\appendix
\section{Technical lemmas}
This appendix includes proofs of \Cref{Dns}, \Cref{Ds}, \Cref{lemtech} and \Cref{lemideal}. We start by proving the first two decomposition results. The following standard result will be used in the proof of \Cref{Dns}. We state it here for completeness.
\begin{prop}\label{propserre}
    Let $p$ be an odd prime. Write $x=up^n\in\mathbf{Q}_p^\times$. Then $x$ is a square if and only if $n$ is even and $u$ is a square in $\mathbf{Z}_p^\times/1+p\mathbf{Z}_p$
    \begin{proof}
        \cite{serre2012course} Section $3.3$.
    \end{proof}
\end{prop}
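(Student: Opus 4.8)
The plan is to reduce the claim to two standard facts: the well-definedness of the $p$-adic valuation on $\mathbf{Q}_p^\times$, and the structure of the unit group $\mathbf{Z}_p^\times$ for odd $p$. First I would treat the \emph{only if} direction. If $x=y^2$ with $y\in\mathbf{Q}_p^\times$, write $y=vp^m$ with $v\in\mathbf{Z}_p^\times$ and $m\in\mathbf{Z}$; then $x=v^2p^{2m}$, and since the valuation of an element of $\mathbf{Q}_p^\times$ is well defined we get $n=2m$ (even) and $u=v^2$, so the image of $u$ in $\mathbf{Z}_p^\times/(1+p\mathbf{Z}_p)$ is the square of the image of $v$, hence a square there.

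For the \emph{if} direction, write $n=2m$ and assume the class of $u$ in $\mathbf{Z}_p^\times/(1+p\mathbf{Z}_p)$ is a square; via the isomorphism $\mathbf{Z}_p^\times/(1+p\mathbf{Z}_p)\xrightarrow{\sim}\mathbf{F}_p^\times$ given by reduction mod $p$, this says $\bar u\in\mathbf{F}_p^\times$ is a square. It suffices to show $u$ itself is a square in $\mathbf{Z}_p^\times$, since then $x=(wp^m)^2$ where $w^2=u$. Choose $a\in\mathbf{Z}_p^\times$ with $\bar a^2=\bar u$ and apply Hensel's lemma to $f(T)=T^2-u$ at the approximate root $a$: here $f(a)\equiv 0\pmod p$ and $f'(a)=2a\in\mathbf{Z}_p^\times$ precisely because $p$ is odd, so the lemma yields $w\in\mathbf{Z}_p$ with $w^2=u$, as required.

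Alternatively one may argue through the decomposition $\mathbf{Z}_p^\times\cong T\times(1+p\mathbf{Z}_p)$, where $T$ is the group of $(p-1)$-st roots of unity in $\mathbf{Z}_p$ (Teichm\"uller representatives): the principal units $1+p\mathbf{Z}_p$ form a pro-$p$ group, so squaring is a bijection on them when $p$ is odd, and an element of $T\cong\mathbf{F}_p^\times$ is a square in $\mathbf{Z}_p^\times$ iff it is a square in $\mathbf{F}_p^\times$; this gives exactly the stated criterion. There is no real obstacle in this proof; the only place where the hypothesis $p\neq 2$ genuinely enters is the invertibility of $2$, used either as the nonvanishing of the derivative $f'(a)=2a$ in the Hensel step or, equivalently, as the $2$-divisibility of $1+p\mathbf{Z}_p$.
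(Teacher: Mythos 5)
Your proof is correct, and it reproduces the standard argument (Hensel's lemma on $T^2-u$, with $f'(a)=2a$ a unit because $p$ is odd, or equivalently the decomposition $\mathbf{Z}_p^\times\cong\mu_{p-1}\times(1+p\mathbf{Z}_p)$) that appears in the reference the paper cites; the paper itself gives no proof beyond pointing to Serre, Section~3.3, so you are filling in exactly the intended argument.
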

\begin{lem}\label{techdecomp 1}
    For $H$ non-split, we have the following double coset decomposition $$G_p=\bigcup_{\lambda\in\mathbf{Z}_{\geq 0}}H_ps(\lambda) G_p^\circ$$where $s(\lambda):=\left[\begin{smallmatrix}
        p^\lambda&\\
        &1
    \end{smallmatrix}\right]$. Furthermore, the decomposition is disjoint.
    \begin{proof}
        By Iwasawa decomposition for $\GL_2(\mathbf{Q}_p)$ we have that \begin{align}\label{eq:app 1}G_p=\bigcup_{\substack{r_1,r_2,r_3\in\mathbf{Z}_{\geq 0}\\
        u\in\mathbf{Z}_p^\times\cup \{0\}}} H_p\left[\begin{smallmatrix}
            p^{r_1}& up^{r_2}\\
            & p^{r_3}
        \end{smallmatrix}\right]G_p^\circ\end{align}For $u=0$ note that $$\left[\begin{smallmatrix}
            & 1\\
            -D
        \end{smallmatrix}\right]\left[\begin{smallmatrix}
            p^{r_1}&\\
            &p^{r_3}
        \end{smallmatrix}\right]\left[\begin{smallmatrix}
            & -D^{-1}\\
            1&
        \end{smallmatrix}\right]=\left[\begin{smallmatrix}
            p^{r_3} & \\
            & p^{r_1}
        \end{smallmatrix}\right]$$
        and $\left[\begin{smallmatrix}
            & -D^{-1}\\
            1&
        \end{smallmatrix}\right]$ is in $G_p^\circ$. Hence, we may assume that $r_3\leq r_1$ and in this case we are done. We can now assume that $u$ is a unit. It suffices to show that each double coset in \eqref{eq:app 1} is generated by a diagonal matrix.\\
        \\
        If $r_2\geq r_1$ we have $\left[\begin{smallmatrix}
            p^{r_1}& up^{r_2}\\
            & p^{r_3}
        \end{smallmatrix}\right]=\left[\begin{smallmatrix}
            p^{r_1}&\\
            &p^{r_2}
        \end{smallmatrix}\right]\left[\begin{smallmatrix}
            1& up^{r_2-r_1}\\
            &1
        \end{smallmatrix}\right]$ and the right most matrix is in $G_p^\circ$ since $r_2-r_1\geq 0$, so we are done. As a result, from now, we also assume that $r_1>r_2$.\\
        \\
        \textbf{Case 1:}$(r_2>r_3)$. In this case we use the following matrix relation $$\left[\begin{smallmatrix}
            Dp^{r_2-r_3}& u^{-1}\\
            -u^{-1}D & Dp^{r_2-r_2}
        \end{smallmatrix}\right]\left[\begin{smallmatrix}
            p^{r_1}& up^{r_2}\\
            & p^{r_3}
        \end{smallmatrix}\right]=\left[\begin{smallmatrix}
            p^{r_3}&\\
            &p^{r_1}
        \end{smallmatrix}\right]\left[\begin{smallmatrix}
            Dp^{r_1+r_2-2r_3}&u^{-1}+Dup^{2(r_2-r_3)}\\
            -Du^{-1} &
        \end{smallmatrix}\right]$$where the left most matrix is an element of $H_p$ and the right most matrix is an element of $G_p^\circ$ since we are assuming that $r_1>r_2>r_3$. So in this case we are once again done.\\
        \\
        \textbf{Case 2:}$(r_2\leq r_3$). In this case we use the following matrix relation instead $$\left[\begin{smallmatrix}
            p^{r_3-r_2}&-u\\
            uD&p^{r_3-r_2}
        \end{smallmatrix}\right]\left[\begin{smallmatrix}
            p^{r_1} & up^{r_2}\\
            & p^{r_3}
        \end{smallmatrix}\right]=\left[\begin{smallmatrix}
            p^{r_1+r_3-r_2}&\\
            & p^{r_2}
        \end{smallmatrix}\right]\left[\begin{smallmatrix}
            1& \\
            uDp^{r_1-r_2} & u^2D+p^{2(r_3-r_2)}
        \end{smallmatrix}\right]$$where again the left most matrix is in $H_p$. Now, the right most matrix is clearly in $G_p^\circ$ when $r_2<r_3$. We need tocheck it's again an element of $G_p^\circ$ even when $r_2=r_3$. That is, we need to check that $u^2D+1$ is a unit in $\mathbf{Z}_p$, but this follows from \Cref{propserre}. Thus, our claim holds and we are done in all cases. \\
        \\
        Now we check disjointedness. Suppose we have an equality of double cosets 
        $$H_pt(\lambda_1)^{-1}G_p^\circ= H_p t(\lambda_2)^{-1} G_p^\circ$$where without loss of generality we assume $\lambda_1\geq \lambda_2\geq 0$. After carrying all the necessary matrix manipulations, the double coset equality implies that the matrix $$\left[\begin{matrix}
            a &bp^{-\lambda_2}\\
            -bDp^{\lambda_1} & ap^{\lambda_1-\lambda_2}
        \end{matrix}\right]$$belongs in $G_p^\circ$. Here $\left[\begin{smallmatrix}
            a & b\\
            -bD & a
        \end{smallmatrix}\right]$ is an element of $H_p$. This implies that both $a$ and $b$ are elements of $\mathbf{Z}_p$. The determinant of the above matrix is equal to $p^{\lambda_1-\lambda_2}(a^2+b^2D)$ and this is an element of $\mathbf{Z}_p^\times$. Since $a,b\in\mathbf{Z}_p$, we conclude that $\lambda_1=\lambda_2$.
    \end{proof}
\end{lem}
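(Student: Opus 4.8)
The plan is to translate the statement into a classification of lattice orbits. Write $E=\mathbf{Q}_p(\sqrt{-D})$, so that $H_p=\rho(E^\times)$ where $\rho\colon E\to\mathrm{M}_2(\mathbf{Q}_p)$ is the embedding $a+b\sqrt{-D}\mapsto\left[\begin{smallmatrix} a & b\\ -bD & a\end{smallmatrix}\right]$ of \Cref{GHM}. First I would fix the $\mathbf{Q}_p$-linear isomorphism $\phi\colon E\to\mathbf{Q}_p^2$, $a+b\sqrt{-D}\mapsto (a,-bD)$, which satisfies $\phi(\alpha v)=\rho(\alpha)\phi(v)$ for all $\alpha\in E^\times$; since $D\in\mathbf{Z}_p^\times$ and $p$ is odd one has $\mathcal{O}_E=\mathbf{Z}_p[\sqrt{-D}]$, and a direct check gives $\phi(\mathcal{O}_E)=\mathbf{Z}_p^2$. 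Transporting through $\phi$ the standard bijection between $G_p/G_p^\circ$ and the set of $\mathbf{Z}_p$-lattices in $\mathbf{Q}_p^2$ (sending $gG_p^\circ$ to $g\mathbf{Z}_p^2$, with $G_p^\circ$ the stabiliser of $\mathbf{Z}_p^2$) then identifies $H_p\backslash G_p/G_p^\circ$ with the set of $E^\times$-orbits of $\mathbf{Z}_p$-lattices in $E$, the $E^\times$-action being by multiplication.

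The core of the proof is then to show that each $E^\times$-orbit of lattices contains exactly one lattice of the form $\mathcal{O}_\lambda:=\mathbf{Z}_p+p^\lambda\mathcal{O}_E$ with $\lambda\in\mathbf{Z}_{\geq 0}$. For existence: given a lattice $L$, multiply it by a power of $p$ so that $L\subseteq\mathcal{O}_E$ and $L\not\subseteq p\mathcal{O}_E$, and look at its image $\overline L$ in $\mathcal{O}_E/p\mathcal{O}_E\cong\mathbf{F}_{p^2}$, a nonzero $\mathbf{F}_p$-subspace. If $\overline L=\mathbf{F}_{p^2}$, Nakayama forces $L=\mathcal{O}_E=\mathcal{O}_0$. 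If $\overline L$ is a line, pick a unit $u\in\mathcal{O}_E^\times$ reducing to a generator of $\overline L$ and replace $L$ by $u^{-1}L$; after this we may assume $1\in L$, and projecting $\mathcal{O}_E=\mathbf{Z}_p\oplus\mathbf{Z}_p\sqrt{-D}$ onto the $\sqrt{-D}$-coordinate sends $L$ onto $p^\lambda\mathbf{Z}_p$ for some $\lambda\geq 0$; combined with $1\in L$ this pins down $L=\mathbf{Z}_p+p^\lambda\mathbf{Z}_p\sqrt{-D}=\mathcal{O}_\lambda$. For uniqueness: if $\mathcal{O}_\lambda=x\mathcal{O}_\mu$ with $x\in E^\times$, then multiplying the two sides (each of which is a ring with $1$) yields $x^2\mathcal{O}_\mu=x\mathcal{O}_\mu$, hence $x\in\mathcal{O}_\mu^\times$ and $\mathcal{O}_\lambda=\mathcal{O}_\mu$; comparing $\mathbf{Z}_p$-indices in $\mathcal{O}_E$ gives $\lambda=\mu$.

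It remains to line up the representatives: a one-line computation gives $\phi^{-1}(s(\lambda)\mathbf{Z}_p^2)=p^\lambda\mathbf{Z}_p+\mathbf{Z}_p\sqrt{-D}$, and multiplying by the unit $\sqrt{-D}^{-1}\in\mathcal{O}_E^\times$ turns this into $\mathbf{Z}_p+p^\lambda\mathbf{Z}_p\sqrt{-D}=\mathcal{O}_\lambda$; so $s(\lambda)\mathbf{Z}_p^2$ represents the orbit of $\mathcal{O}_\lambda$, and by the previous paragraph the lattices $\{s(\lambda)\mathbf{Z}_p^2\}_{\lambda\geq 0}$ meet every orbit exactly once. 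Translating back through the bijection of the first step yields both the covering $G_p=\bigcup_{\lambda\geq 0}H_ps(\lambda)G_p^\circ$ and its disjointness. I expect the main obstacle to be the orbit classification in the middle paragraph — in particular the normalisation step (rotating the line $\overline L$ onto $\mathbf{F}_p\cdot\overline 1$) and the uniqueness argument; this is also exactly where oddness of $p$ is used, via $\mathcal{O}_E=\mathbf{Z}_p[\sqrt{-D}]$ and $\sqrt{-D}\in\mathcal{O}_E^\times$. If one prefers to avoid lattices, an alternative is to run the Iwasawa/Cartan decomposition to bring a general $g\in G_p$ to diagonal form and then absorb the remaining off-diagonal and ordering ambiguities by explicit conjugations by elements of $H_p$, proving disjointness via a determinant–valuation computation; this is elementary but combinatorially heavier.
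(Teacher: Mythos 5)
Your argument is correct and takes a genuinely different route from the paper. The paper runs the Iwasawa decomposition to write a general coset representative as an upper-triangular matrix $\left[\begin{smallmatrix} p^{r_1} & up^{r_2}\\ & p^{r_3}\end{smallmatrix}\right]$ and then, by a three-way case split on the relative sizes of $r_1,r_2,r_3$, exhibits explicit elements of $H_p$ and $G_p^\circ$ that diagonalise it, invoking Hensel (via the squares criterion in $\mathbf{Z}_p$) to certify a unit in the tied case $r_2=r_3$; disjointness is then a determinant--valuation computation. You instead pass to the classification of $E^\times$-orbits of $\mathbf{Z}_p$-lattices in $E$, observing that every orbit contains a unique $\mathbf{Z}_p$-order $\mathcal{O}_\lambda=\mathbf{Z}_p+p^\lambda\mathcal{O}_E$ and that $s(\lambda)\mathbf{Z}_p^2$ lands in the orbit of $\mathcal{O}_\lambda$; existence and disjointness fall out simultaneously from the classification of orders. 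Your route is more conceptual and explains the index set (conductor exponents of orders in $E$), at the price of setting up the lattice translation; the paper's is more hands-on, and its explicit matrix identities are in fact reused later (e.g.\ in the proof of Theorem~\ref{thmCunivfree}, which recycles essentially the same left-coset computations). One small imprecision to fix: in the normalisation step you should take $u$ to be an element \emph{of $L$} whose reduction generates the line $\overline L$ — any such $u$ lies outside $p\mathcal{O}_E$ and hence is automatically in $\mathcal{O}_E^\times$, and only with $u\in L$ does replacing $L$ by $u^{-1}L$ actually put $1$ in $L$; merely picking an arbitrary unit with the right reduction modulo $p$ does not a priori give this.
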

\begin{cor}[\Cref{Dns}]\label{lemDns}
    If $H$ is non-split then we have the following double coset decomposition
    $$\mathfrak{G}_p=\bigcup_{\substack{\lambda\in\mathbf{Z}_{\geq 0}\\ a\in\mathbf{Z}}}\mathfrak{H}_p\left(s(\lambda),p^a\right)\mathfrak{G}_p^\circ$$
    where $s(\lambda):=\left[\begin{smallmatrix}
        p^\lambda & \\
         & 1
    \end{smallmatrix}\right]$. Furthermore, the decomposition is disjoint.
    \begin{proof}
        Existence follows at once from \Cref{techdecomp 1}. Disjointness follows again from \Cref{techdecomp 1} and the fact that the pullback of $\mathbf{Z}_p^\times$ under $(v_p\circ\det):H_p\rightarrow \mathbf{Q}_p^\times$, is precisely $H_p^\circ$.
    \end{proof}
\end{cor}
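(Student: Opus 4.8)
The plan is to bootstrap the decomposition of $\mathfrak{G}_p=G_p\times H_p$ from the $G_p$-decomposition $G_p=\bigcup_{\lambda\in\mathbf{Z}_{\geq 0}}H_ps(\lambda)G_p^\circ$ of \Cref{techdecomp 1}, treating the two factors of $\mathfrak{G}_p$ one at a time. The two facts that make this go through are that $\mathfrak{H}_p$ surjects onto $H_p$ under the first projection $\mathfrak{G}_p\to G_p$, and that $\mathfrak{G}_p^\circ=G_p^\circ\times H_p^\circ$ is a genuine product, so the $H_p$-coordinate of a representative can still be adjusted by $H_p^\circ$ on the right after the $G_p$-coordinate has been normalized. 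For existence, take $(g,\eta)\in\mathfrak{G}_p$ and use \Cref{techdecomp 1} to write $g=h\,s(\lambda)\,k$ with $h\in H_p$, $k\in G_p^\circ$ and $\lambda\in\mathbf{Z}_{\geq 0}$. Since $H$ is non-split, $H_p=E^\times$ and $H_p^\circ=\mathcal{O}_E^\times$ for the unramified quadratic $E/\mathbf{Q}_p$, in which $p$ is a uniformizer; hence $H_p=Z_pH_p^\circ$ and we may write $h\eta=p^{a}u$ with $a\in\mathbf{Z}$ and $u\in H_p^\circ$. A direct check then gives
$$(g,\eta)=(h,h^{-1})\,(s(\lambda),p^{a})\,(k,u),$$
with $(h,h^{-1})\in\mathfrak{H}_p$ and $(k,u)\in G_p^\circ\times H_p^\circ=\mathfrak{G}_p^\circ$, so $(g,\eta)\in\mathfrak{H}_p(s(\lambda),p^{a})\mathfrak{G}_p^\circ$.

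For disjointness, suppose $(s(\lambda_1),p^{a_1})$ and $(s(\lambda_2),p^{a_2})$ lie in a common double coset. Under the first projection $\mathfrak{G}_p\to G_p$, the double coset $\mathfrak{H}_p(s(\lambda_i),p^{a_i})\mathfrak{G}_p^\circ$ maps exactly onto $H_ps(\lambda_i)G_p^\circ$, so the disjointness part of \Cref{techdecomp 1} forces $\lambda_1=\lambda_2=:\lambda$. Writing $(s(\lambda),p^{a_1})=(h,h^{-1})(s(\lambda),p^{a_2})(k_1,k_2)$ with $h\in H_p$ and $(k_1,k_2)\in\mathfrak{G}_p^\circ$, the $G_p$-coordinate gives $\det(h)\det(k_1)=1$, hence $\det(h)\in\mathbf{Z}_p^\times$; equivalently, since the preimage of $\mathbf{Z}_p^\times$ under $\det\colon H_p\to\mathbf{Q}_p^\times$ is $H_p^\circ$, one has $h\in H_p^\circ$. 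Taking determinants in the $H_p$-coordinate equation $p^{a_1}=h^{-1}p^{a_2}k_2$ then yields $p^{2a_1}=\det(h)^{-1}\det(k_2)\,p^{2a_2}$ with $\det(h)^{-1}\det(k_2)\in\mathbf{Z}_p^\times$, and comparing $p$-adic valuations gives $a_1=a_2$.

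Beyond \Cref{techdecomp 1} itself, which carries the real weight, there is no serious obstacle here; the points that need to be kept straight are purely bookkeeping: that $\mathfrak{H}_p$ acts on the $H_p$-factor through $h\mapsto h^{-1}$ rather than $h$, and the mildly less obvious observation, special to the non-split case, that normalizing the $G_p$-coordinate already forces the conjugating element $h$ into $H_p^\circ$ — this is precisely what makes the exponent $a$ an invariant of the double coset, and it is where the identity relating $\det$ on $H_p$ to $H_p^\circ$ is used.
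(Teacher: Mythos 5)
Your proof is correct and follows the same route as the paper's: existence is bootstrapped from \Cref{techdecomp 1} using $H_p = Z_p H_p^\circ$, and disjointness reduces to \Cref{techdecomp 1} plus the fact that an element of $H_p$ with unit determinant already lies in $H_p^\circ$ (which is exactly the valuation-theoretic fact the paper invokes). You have simply filled in the bookkeeping that the paper's one-line proof leaves implicit.
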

\begin{lem}\label{Murasedecom}
    Let $H $ be split. We have the following double coset decomposition 
    $$G_p=\bigcup_{\lambda\in\mathbf{Z}_{\geq 0}}H_p n_0 s(\lambda) G_p^\circ$$where $n_0:=\left[\begin{smallmatrix}
        1 & 1\\
         & 1
    \end{smallmatrix}\right]$, and $s(\lambda):=\left[\begin{smallmatrix}
        p^\lambda & \\
         & 1
    \end{smallmatrix}\right]$. Furthermore, this decomposition is disjoint.
    \begin{proof}
        This is a special case of Proposition $2.3$ in \cite{murase1996shintani} upon writing $\left[\begin{smallmatrix}
            1 & p^{-\lambda} \\
            & 1
\end{smallmatrix}\right]=s(\lambda)^{-1}n_0s(\lambda).$
    \end{proof}
        \end{lem}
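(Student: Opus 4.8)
The plan is to prove this by hand, along the same lines as the non-split case in \Cref{techdecomp 1}, but exploiting the crucial simplification that for split $H$ the torus $H_p$ is the full diagonal torus of $G_p$, hence a subgroup of the upper-triangular Borel $B_p$; because of this, no square-class considerations (no analogue of \Cref{propserre}) are needed. By the Iwasawa decomposition $G_p = B_p G_p^\circ$, and since $B_p = H_p N_p$, every double coset in $H_p\backslash G_p/G_p^\circ$ admits a representative of the form $n(x):=\left[\begin{smallmatrix}1 & x\\ 0 & 1\end{smallmatrix}\right]$ with $x\in\mathbf{Q}_p$, so the whole problem reduces to normalizing $x$ and to distinguishing the resulting cosets.

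For existence I would argue as follows. If $v_p(x)\geq 0$ then $n(x)\in G_p^\circ$ and the coset equals $H_p G_p^\circ = H_p n_0 s(0) G_p^\circ$. If $\lambda:=-v_p(x)\geq 1$, write $x=up^{-\lambda}$ with $u\in\mathbf{Z}_p^\times$; then multiplying on the left by $\left[\begin{smallmatrix}u^{-1} & 0\\ 0 & 1\end{smallmatrix}\right]\in H_p$ and on the right by $\left[\begin{smallmatrix}u & 0\\ 0 & 1\end{smallmatrix}\right]\in G_p^\circ$ turns $n(x)$ into $n(p^{-\lambda})$, and since $n(p^{-\lambda})=s(\lambda)^{-1}n_0 s(\lambda)$ with $s(\lambda)\in H_p$, we get $H_p n(x) G_p^\circ = H_p n_0 s(\lambda) G_p^\circ$. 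Hence the union on the right-hand side is all of $G_p$.

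For disjointness, suppose $H_p n_0 s(\lambda) G_p^\circ = H_p n_0 s(\mu) G_p^\circ$ with $\lambda\geq\mu\geq 0$; equivalently $n(p^{-\mu})^{-1} h^{-1} n(p^{-\lambda})\in G_p^\circ$ for some diagonal $h=\left[\begin{smallmatrix}t_1 & 0\\ 0 & t_2\end{smallmatrix}\right]\in H_p$. A direct computation gives
$$n(p^{-\mu})^{-1} h^{-1} n(p^{-\lambda}) = \left[\begin{smallmatrix} t_1^{-1} & t_1^{-1}p^{-\lambda}-t_2^{-1}p^{-\mu}\\ 0 & t_2^{-1}\end{smallmatrix}\right],$$
and for this to lie in $\GL_2(\mathbf{Z}_p)$ the diagonal entries and the determinant force $t_1,t_2\in\mathbf{Z}_p^\times$, after which integrality of the $(1,2)$-entry forces $t_1^{-1}p^{-\lambda}-t_2^{-1}p^{-\mu}\in\mathbf{Z}_p$; since $t_1,t_2$ are units the two summands have valuations $-\lambda$ and $-\mu$, so if $\lambda>\mu$ the difference has valuation $-\lambda<0$, a contradiction, whence $\lambda=\mu$.

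Alternatively — and perhaps more conceptually — one can run the argument through $H_p$-orbits on the set of $\mathbf{Z}_p$-lattices $\Lambda\subseteq\mathbf{Q}_p^2$: after scaling $\Lambda$ by an element of $H_p$ so that $\Lambda\cap\mathbf{Q}_p e_i=\mathbf{Z}_p e_i$ for $i=1,2$, one has $\mathbf{Z}_p^2\subseteq\Lambda$ with $\Lambda/\mathbf{Z}_p^2$ a finite subgroup of $(\mathbf{Q}_p/\mathbf{Z}_p)^2$ meeting both coordinate lines trivially, hence cyclic (it cannot contain $(\mathbf{Z}/p)^2=p^{-1}\mathbf{Z}_p^2/\mathbf{Z}_p^2$) of order $p^\lambda$, and a generator can be rescaled by the residual $\mathbf{Z}_p^\times\times\mathbf{Z}_p^\times\subseteq H_p$ to $p^{-\lambda}(e_1+e_2)$; this recovers the representatives $n_0 s(\lambda)$ and exhibits $\lambda=v_p\big([\Lambda:(\Lambda\cap\mathbf{Q}_p e_1)\oplus(\Lambda\cap\mathbf{Q}_p e_2)]\big)$ as a manifest $H_p$-invariant, which is precisely disjointness. (In any case the statement is also a special case of \cite[Proposition~2.3]{murase1996shintani}.) The only mildly fussy points are making sure every $N_p$-coset representative is absorbed in the existence step, and the valuation bookkeeping in the disjointness step; neither presents a genuine obstacle.
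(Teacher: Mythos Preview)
Your proof is correct. The paper's own proof is simply a one-line citation to \cite[Proposition~2.3]{murase1996shintani}, together with the observation $\left[\begin{smallmatrix}1 & p^{-\lambda}\\ & 1\end{smallmatrix}\right]=s(\lambda)^{-1}n_0s(\lambda)$, so your argument takes a genuinely different (and more self-contained) route. You exploit directly that for split $H$ the Iwasawa decomposition $G_p=B_pG_p^\circ=H_pN_pG_p^\circ$ already produces unipotent representatives $n(x)$, reduce $x$ to $p^{-\lambda}$ by the diagonal conjugation $\left[\begin{smallmatrix}u^{-1}&\\&1\end{smallmatrix}\right]n(x)\left[\begin{smallmatrix}u&\\&1\end{smallmatrix}\right]=n(p^{-\lambda})$, and then settle disjointness by the explicit $2\times 2$ computation forcing $t_1,t_2\in\mathbf{Z}_p^\times$ and hence $v_p\bigl(t_1^{-1}p^{-\lambda}-t_2^{-1}p^{-\mu}\bigr)=-\lambda<0$ when $\lambda>\mu$. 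This buys a proof independent of \cite{murase1996shintani} and transparently parallel to the non-split argument of \Cref{techdecomp 1}; the paper's citation, by contrast, is shorter and points to the general $\GL_n$ statement. Your alternative lattice description, exhibiting $\lambda$ as $v_p\bigl([\Lambda:(\Lambda\cap\mathbf{Q}_pe_1)\oplus(\Lambda\cap\mathbf{Q}_pe_2)]\bigr)$, is a nice conceptual complement and makes the $H_p$-invariance of $\lambda$ manifest.
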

\begin{cor}[\Cref{Ds}]\label{lemDs}
    If $H$ is split then we have the following double coset decomposition
    $$\mathfrak{G}_p=\bigcup_{\substack{\lambda\in\mathbf{Z}_{\geq 0}\\ a,b\in\mathbf{Z}}}\mathfrak{H}_p \left(n_0s(\lambda), \left[\begin{smallmatrix}
        p^a & \\
         & 1
    \end{smallmatrix}\right] p^b\right)\mathfrak{G}_p^\circ.$$
    Furthermore, this decomposition is disjoint.
    \begin{proof}
         The existence of such a decomposition follows at once from \Cref{Murasedecom}. To see disjointness, suppose that we have the following equality of elements of $\mathfrak{G}_p$
        $$\left(n_0s(\lambda), \left[\begin{smallmatrix}
            p^a & \\
             & 1
        \end{smallmatrix}\right]p^b\right)=\left(h n_0s(\lambda_0)k_1, h^{-1}\left[\begin{smallmatrix}
            p^{a_0} & \\
             & 1
        \end{smallmatrix}\right]p^{b_0}k_2\right)$$
        with $h\in H_p$ and $(k_1,k_2)\in \mathfrak{G}_p^\circ$. It follows that $h$ is equal to $\left[\begin{smallmatrix}
            p^{a_0-a} & \\
             & 1
        \end{smallmatrix}\right]p^{b_0-b}k_2$ and thus $$n_0s(\lambda)=\left[\begin{smallmatrix}
            p^{a_0-a} & \\
             & 1
        \end{smallmatrix}\right]p^{b_0-b}k_2n_0s(\lambda_0)k_1.$$
        By \Cref{Murasedecom}, this instantly implies that $\lambda=\lambda_0$. The $(2,1)$-entry of the left hand side is zero, hence the $(2,1)$-entry of $k_1$ must be zero and hence it's $(2,2)$-entry must be a unit. Taking $p$-adic valuation of the $(2,2)$-entry on both sides, we see that $b=b_0$. Finally taking $p$-adic valuation of determinants, yields $a=a_0$. This concludes the proof.
    \end{proof}
\end{cor}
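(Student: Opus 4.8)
The plan is to bootstrap from the analogous decomposition for $G_p$ alone, namely \Cref{Murasedecom} (which gives $G_p=\bigcup_{\lambda\geq 0}H_pn_0s(\lambda)G_p^\circ$, disjointly), exploiting that $\mathfrak{H}_p$ is the graph of $h\mapsto h^{-1}$ and that $H_p\cong\GL_1(\mathbf{Q}_p)^2$ is abelian.

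\textbf{Existence.} Given $(g,t)\in\mathfrak{G}_p=G_p\times H_p$, I would first invoke \Cref{Murasedecom} to write $g=h_1n_0s(\lambda)k_1$ with $h_1\in H_p$ and $k_1\in G_p^\circ$. Left-translating by $(h_1^{-1},h_1)\in\mathfrak{H}_p$ replaces $(g,t)$ by $(n_0s(\lambda)k_1,\,h_1t)$ inside the same double coset. Since every element of $H_p$ equals $\diag(p^a,1)p^b$ times an element of $H_p^\circ$ for a unique pair $(a,b)\in\mathbf{Z}^2$, write $h_1t=\diag(p^a,1)p^b\,k_2$ with $k_2\in H_p^\circ$; right-translating by $(k_1^{-1},k_2^{-1})\in\mathfrak{G}_p^\circ$ then brings the element to $(n_0s(\lambda),\diag(p^a,1)p^b)$. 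Hence every double coset appears in the proposed list.

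\textbf{Disjointness.} Suppose $(n_0s(\lambda),\diag(p^a,1)p^b)=(hn_0s(\lambda_0)k_1,\,h^{-1}\diag(p^{a_0},1)p^{b_0}k_2)$ for some $h\in H_p$ and $(k_1,k_2)\in\mathfrak{G}_p^\circ$. Reading off the $H$-component and using that $H_p$ is abelian solves for $h=\diag(p^{a_0-a},1)p^{b_0-b}k_2$, so the $G$-component identity becomes $n_0s(\lambda)=\diag(p^{a_0-a},1)p^{b_0-b}k_2\,n_0s(\lambda_0)k_1$. I would extract the three equalities in order: $\lambda=\lambda_0$ from the disjointness half of \Cref{Murasedecom}; then, noting that the $(2,1)$-entry of $n_0s(\lambda)$ vanishes while the left-hand factor is upper triangular with nonzero diagonal, conclude that the lower-left entry of $k_1$ vanishes, so $k_1$ is upper triangular with unit diagonal, whence comparing $p$-adic valuations of the $(2,2)$-entries gives $b=b_0$; finally, comparing $p$-adic valuations of determinants, together with $\lambda=\lambda_0$ and $b=b_0$, gives $a=a_0$.

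The whole argument is essentially formal once \Cref{Murasedecom} is in place, and I do not expect a genuine obstacle; the only step demanding a little care is the disjointness bookkeeping, where one must first pin down that $k_1$ is upper triangular with unit diagonal before reading off valuations. An alternative, slightly slicker route to both halves is to verify that $\mathfrak{H}_p(g,t)\mathfrak{G}_p^\circ\mapsto H_p^\circ(tg)G_p^\circ$ is a well-defined bijection $\mathfrak{H}_p\backslash\mathfrak{G}_p/\mathfrak{G}_p^\circ\xrightarrow{\ \sim\ }H_p^\circ\backslash G_p/G_p^\circ$ --- again using that $H_p$ is abelian --- and then to enumerate the target via \Cref{Murasedecom}; but the direct component-by-component argument above is just as short.
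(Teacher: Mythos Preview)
Your argument is correct and follows essentially the same route as the paper's proof: existence from \Cref{Murasedecom}, then for disjointness solve for $h$ from the $H$-component, substitute into the $G$-component, read off $\lambda=\lambda_0$ from the disjointness in \Cref{Murasedecom}, use the vanishing $(2,1)$-entry to force $k_1$ upper triangular with unit diagonal, compare $(2,2)$-valuations for $b=b_0$, and compare determinant valuations for $a=a_0$. Your write-up is in fact slightly more explicit on the existence side and on justifying why $k_1$ has unit diagonal, but the strategy is identical.
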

\noindent
We now give a proof for \Cref{lemideal}. We first need a bit of preparation.  
\begin{defn}
    For $k\geq 1$, we define the element $\mathcal{T}(p^k)$ of $\mathcal{H}_{G_p}^\circ$, to be the characteristic function of the set of all elements in $\mathrm{Mat}_2(\mathbf{Z}_p)$, whose determinant generates the ideal $p^k\mathbf{Z}_p$ in $\mathbf{Z}_p$.
    
\end{defn}

\begin{rem}\label{remappend}
    It will be useful for our later computations to have the following explicit formula for the characteristic function $\mathcal{T}(p^k)$.
    $$\mathcal{T}(p^k)=\sum_{\substack{0 \leq  j \leq  i  \leq  k\\ \\ i+j=k}} \ch\left(G_p^\circ \left[\begin{smallmatrix}
        p^i & \\
         &  p^j
    \end{smallmatrix}\right] G_p^\circ\right).$$
    In our notation so far, the operator $\mathcal{T}(p)$ is nothing more than $\mathcal{T}_p$.
\end{rem}

\begin{lem}\label{lem appendix bump}
    For any $k\in\mathbf{Z}_{\geq 2}$, the characteristic function $\mathcal{T}(p^k)$ satisfies the following recurrence relation:
    $$\mathcal{T}(p^k)=\mathcal{T}_p \mathcal{T}(p^{k-1})-p\mathcal{S}_p\mathcal{T}(p^{k-2}).$$
    \begin{proof}
        \cite{bump_1997} Proposition $4.6.4$.
    \end{proof}
\end{lem}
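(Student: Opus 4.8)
The plan is to work entirely inside the spherical Hecke algebra $\mathcal{H}_{G_p}^\circ = \mathbf{k}[\mathcal{T}_p, \mathcal{S}_p^{\pm 1}]$, using the fact that this is a polynomial ring in two variables and that the Satake transform identifies it with the ring of Weyl-invariant Laurent polynomials $\mathbf{k}[\alpha^{\pm1}, \beta^{\pm 1}]^{W}$, where $\mathcal{T}_p \mapsto p^{1/2}(\alpha + \beta)$ and $\mathcal{S}_p \mapsto \alpha\beta$. First I would record the generating-function identity: under the Satake isomorphism, the element $\sum_{k \geq 0} \mathcal{T}(p^k) T^k$ (with $\mathcal{T}(1) := \ch(G_p^\circ)$) corresponds to
\[
\sum_{k \geq 0} \mathcal{T}(p^k)\,T^k \;\longmapsto\; \frac{1}{(1 - p^{1/2}\alpha\, T)(1 - p^{1/2}\beta\, T)}.
\]
This is the classical computation expressing the local Hecke series for $\GL_2$ as an Euler factor; it follows either from the explicit double-coset description in \Cref{remappend} together with the Satake formula for $\ch(G_p^\circ \diag(p^i,p^j) G_p^\circ)$ (a Schur polynomial $\mathrm{Sch}_{i-j}$ times a power of $\alpha\beta$), or, more directly, from the standard fact that counting lattices $L' \subseteq L$ of index $p^k$ with the appropriate Hecke weights produces exactly this rational function. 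I would cite \cite{bump_1997} Section $4.6$ for the precise bookkeeping rather than reproduce it.

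Granting this, the recurrence is immediate. Clearing denominators in the generating-function identity gives
\[
\left(\sum_{k \geq 0} \mathcal{T}(p^k)\,T^k\right)\bigl(1 - p^{1/2}(\alpha+\beta)T + p\,\alpha\beta\,T^2\bigr) = 1,
\]
i.e. the power series $\sum_k \mathcal{T}(p^k) T^k$ times $(1 - \mathcal{T}_p T + p\,\mathcal{S}_p T^2)$ equals $1$ in $\mathcal{H}_{G_p}^\circ[[T]]$. Comparing coefficients of $T^k$ for $k \geq 2$ yields
\[
\mathcal{T}(p^k) - \mathcal{T}_p\,\mathcal{T}(p^{k-1}) + p\,\mathcal{S}_p\,\mathcal{T}(p^{k-2}) = 0,
\]
which is exactly the claimed identity $\mathcal{T}(p^k) = \mathcal{T}_p\,\mathcal{T}(p^{k-1}) - p\,\mathcal{S}_p\,\mathcal{T}(p^{k-2})$. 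The comparison of the $T^0$ and $T^1$ coefficients separately confirms the base cases $\mathcal{T}(1) = \ch(G_p^\circ)$ and $\mathcal{T}(p) = \mathcal{T}_p$, consistent with \Cref{remappend}.

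The only real content — hence the main obstacle — is establishing the generating-function identity, or equivalently verifying the recurrence at the level of double cosets directly. If one prefers to avoid invoking the Satake transform, the alternative is a purely combinatorial argument: expand $\mathcal{T}_p \cdot \mathcal{T}(p^{k-1})$ as a convolution, decompose each product of double cosets via the Cartan decomposition $G_p = \bigsqcup_{i \geq j} G_p^\circ \diag(p^i, p^j) G_p^\circ$, and track multiplicities of each double coset $G_p^\circ \diag(p^i,p^j)G_p^\circ$ with $i + j = k$; the overcounting is precisely corrected by the term $p\,\mathcal{S}_p\,\mathcal{T}(p^{k-2})$, the factor $p$ arising as $[\,G_p^\circ : G_p^\circ \cap \diag(p,1)G_p^\circ\diag(p,1)^{-1}\,]$. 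Either route is standard; I would present the generating-function version as it is cleanest, and simply reference \cite{bump_1997} Proposition $4.6.4$ for the detailed verification.
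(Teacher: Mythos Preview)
Your proposal is correct and consistent with the paper's treatment: the paper's own proof is simply a citation to \cite{bump_1997} Proposition~$4.6.4$, so your generating-function sketch (with the same reference for details) is at least as complete as what the paper provides.
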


\begin{lem}[\Cref{lemideal}]\label{appendix lemma ideal}
    For any $\lambda\in\mathbf{Z}_{\geq 1}$, the characteristic function $\ch(G_p^\circ s(\lambda) G_p^\circ)$ is an element of the ideal 
    $\left(p+1,\mathcal{T}_p\right)\subseteq\mathcal{H}_{G_p}^\circ(\mathbf{Z}[1/p])$.
    \begin{proof}
        For $\lambda=1$, there is nothing to prove. For $\lambda\geq 2$, we prove the result by induction on $\lambda\geq 2$. We again write $\phi_{s(\lambda)}$ for the function $\ch\left(G_p^\circ\left[\begin{smallmatrix}
            p ^\lambda & \\
             & 1
        \end{smallmatrix}\right]G_p^\circ\right).$
        For $\lambda=2$, we have \begin{align*}
            \phi_{s(2)}&= \mathcal{T}(p^2)-\mathcal{S}_p\\
            &=\mathcal{T}_p^2-(p+1)\mathcal{S}_p
        \end{align*}
        \noindent Now let $\lambda>2$. We have 
        $$\mathcal{T}(p^\lambda)=\phi_{s(\lambda)} + \sum_{\substack{0 <j\leq i< \lambda \\ i+j=\lambda}} \mathcal{S}_p^j\phi_{s(i-j)}$$
        and similar identities by replacing  $\lambda$ with $\lambda-1$, $\lambda-2$. Using \Cref{lem appendix bump}, we have 
        \begin{align}\label{eq:19}
            \nonumber\phi_{s(\lambda)}&=\mathcal{T}_p \mathcal{T}(p^{\lambda-1})-p\mathcal{S}_p \mathcal{T}(p^{\lambda-2})-\sum_{\substack{ 0<j\leq i<\lambda\\ i+j=\lambda}}\mathcal{S}_p^j\phi_{s(i-j)}\\
            &=\left(\mathcal{T}_p\sum_{\substack{ 0\leq j\leq i\leq \lambda-1\\ i+j=\lambda-1}}\mathcal{S}_p^j\phi_{s(i-j)}\right) - \left(p\mathcal{S}_p\sum_{\substack{ 0\leq j\leq i\leq \lambda-2\\ i+j=\lambda-2}}\mathcal{S}_p^j\phi_{s(i-j)}\right)-\left(\sum_{\substack{ 0< j\leq i<\lambda\\ i+j=\lambda}}\mathcal{S}_p^j\phi_{s(i-j)}\right).
        \end{align}
        The first of the three brackets always lies in the ideal in question. If the parity of the integers $\lambda,\lambda-1,\lambda-2$ is odd, even, odd, then every summand in the second and third brackets is of the form $\mathcal{S}_p^j\phi_{s(i-j)}$, with $i-j$ strictly bigger than zero and strictly less than $\lambda$. Hence in this case, we are done by induction and the case $\lambda=1$. If on the other hand the parity of the integers $\lambda, \lambda-1, \lambda-2$ is even, odd, even, then the second bracket contains a summand of the form $p\mathcal{S}_p^{\frac{\lambda}{2}}$
        and the third bracket contains a summand of the form 
        $\mathcal{S}_p^{\frac{\lambda}{2}}.$
        Adding the two together, we end up with
        $  
      (p+1)\mathcal{S}_p^{\frac{\lambda}{2}}
        $
        which is an element of the ideal in question. Finally, every other summand in the second and third brackets of \eqref{eq:19} lies in this ideal by induction, and the case $\lambda=1$. This concludes the proof of the lemma.
    \end{proof}
\end{lem}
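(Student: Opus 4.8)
The plan is to induct on $\lambda$, using the recurrence of \Cref{lem appendix bump} for the Hecke operators $\mathcal{T}(p^k)$ together with the expansion of $\mathcal{T}(p^k)$ into the double coset operators $\phi_{s(m)}:=\ch\!\left(G_p^\circ s(m) G_p^\circ\right)$, where $s(m)=\left[\begin{smallmatrix} p^m & \\ & 1\end{smallmatrix}\right]$. The starting point is that, factoring the central element $\left[\begin{smallmatrix} p^j & \\ & p^j\end{smallmatrix}\right]$ out of $\left[\begin{smallmatrix} p^i & \\ & p^j\end{smallmatrix}\right]$, the explicit formula of \Cref{remappend} rewrites as
\[
\mathcal{T}(p^k)=\sum_{\substack{0\le j\le i\le k\\ i+j=k}}\mathcal{S}_p^{\,j}\,\phi_{s(i-j)}.
\]
Thus $\mathcal{T}(p^k)$ is a $\mathbf{Z}[1/p]$-combination of terms $\mathcal{S}_p^{\,j}\phi_{s(m)}$ with $m=k-2j$, so $m\equiv k\pmod 2$, $m$ descends to $1$ if $k$ is odd and to $0$ if $k$ is even, and the summand with $m=0$ — namely $\mathcal{S}_p^{\,k/2}\cdot\ch(G_p^\circ)$, a unit multiple of the identity $\ch(G_p^\circ)$ of the Hecke algebra — occurs exactly when $k$ is even. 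Since $\ch(G_p^\circ)\notin(p+1,\mathcal{T}_p)$, these $m=0$ terms are the only possible obstruction to membership in the ideal, and the whole argument is about controlling them.

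For the base of the induction, $\lambda=1$ is immediate as $\phi_{s(1)}=\mathcal{T}_p$, and $\lambda=2$ follows from $\phi_{s(2)}=\mathcal{T}(p^2)-\mathcal{S}_p=\mathcal{T}_p^2-(p+1)\mathcal{S}_p\in(p+1,\mathcal{T}_p)$. For the inductive step with $\lambda>2$, I would isolate $\phi_{s(\lambda)}$ as the top summand ($j=0$) of the displayed formula for $\mathcal{T}(p^\lambda)$, then substitute $\mathcal{T}(p^\lambda)=\mathcal{T}_p\,\mathcal{T}(p^{\lambda-1})-p\mathcal{S}_p\,\mathcal{T}(p^{\lambda-2})$ from \Cref{lem appendix bump} and expand $\mathcal{T}(p^{\lambda-1})$ and $\mathcal{T}(p^{\lambda-2})$ via the same formula. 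This presents $\phi_{s(\lambda)}$ as a sum of three groups of terms: (i) $\mathcal{T}_p\,\mathcal{T}(p^{\lambda-1})$, which lies in $(p+1,\mathcal{T}_p)$ since it carries a factor $\mathcal{T}_p$; (ii) $-p\mathcal{S}_p\,\mathcal{T}(p^{\lambda-2})=-p\sum_{j}\mathcal{S}_p^{\,j+1}\phi_{s(\lambda-2-2j)}$; and (iii) the lower terms $-\sum_{0<j\le i<\lambda,\ i+j=\lambda}\mathcal{S}_p^{\,j}\phi_{s(i-j)}$ discarded when extracting $\phi_{s(\lambda)}$ from $\mathcal{T}(p^\lambda)$.

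Every $\phi_{s(m)}$ occurring in groups (ii) and (iii) with $1\le m\le\lambda-1$ lies in the ideal by the inductive hypothesis (the case $m=1$ being the base case), so only the $m=0$ terms remain. By the parity remark above, such terms appear only when $\lambda$ (hence also $\lambda-2$) is even: group (ii) then contributes $-p\,\mathcal{S}_p^{\,\lambda/2}$ (from $j=(\lambda-2)/2$), while group (iii) contributes $-\mathcal{S}_p^{\,\lambda/2}$ (from $j=i=\lambda/2$), and together these give $-(p+1)\mathcal{S}_p^{\,\lambda/2}\in(p+1)$. Hence $\phi_{s(\lambda)}\in(p+1,\mathcal{T}_p)$, completing the induction. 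The only delicate point — and the real obstacle — is precisely this bookkeeping: one must check that the $\ch(G_p^\circ)$-contributions occur solely in the even case and cancel down to an exact multiple of $p+1$, with all remaining terms either carrying a factor of $\mathcal{T}_p$ or being absorbed by the inductive hypothesis; everything else is routine manipulation in $\mathbf{Z}[1/p][\mathcal{T}_p,\mathcal{S}_p^{\pm1}]$.
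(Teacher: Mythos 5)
Your proposal is correct and follows essentially the same route as the paper's proof: the same expansion of $\mathcal{T}(p^k)$ into a $\mathbf{Z}[1/p][\mathcal{S}_p^{\pm1}]$-combination of the $\phi_{s(m)}$, the same application of the recurrence from \Cref{lem appendix bump}, and the same parity analysis isolating the $m=0$ contribution $(p+1)\mathcal{S}_p^{\lambda/2}$ (up to sign conventions in how the three brackets are grouped). The framing around $\ch(G_p^\circ)$ being the unique obstruction is a nice way to explain why the parity split matters, but the underlying argument is the paper's.
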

\noindent The last thing left to do is to give a proof of \Cref{lemtech}. 
\begin{lem}[\Cref{lemtech}]
   For $\gamma\in H_p^\circ/H_p^\circ(\lambda)$, we write $\gamma s(\lambda)=p^{z_\gamma}\left[\begin{smallmatrix}
        p^{r_\gamma} & \\
        & 1
    \end{smallmatrix}\right]\left[\begin{smallmatrix}
        1 & n_\gamma \\
        & 1
    \end{smallmatrix}\right] k_\gamma\in Z_pA_pN_p G_p^\circ$. There exists a complete set of distinct coset representatives of $H_p^\circ/H_p^\circ(\lambda)$ that can be partitioned as follows:
       \begin{itemize}
          \item There are $\varphi(p^\lambda)$ elements for which $v_p(n_\gamma)=-\lambda$ and $z_\gamma=0$\\
          \item For each $1\leq i\leq \lambda-1$, there  are $\varphi(p^{\lambda-i})$ elements for which $v_p(n_\gamma)=i-\lambda$ and $z_\gamma= i$. Additionally, there are $\varphi(p^{\lambda-i})$ elements for which $v_p(n_\gamma)=i-\lambda$ and $z_\gamma= 0$.\\
          \item Finally, there's a unique element for which $n_\gamma=0$ and $z_\gamma=\lambda$, and there is a unique element for which $n_\gamma=0$ and $z_\gamma=0$.
       \end{itemize}
       \begin{proof}
         Let $\lambda\in\mathbf{Z}_{\geq 0}.$ We have the following complete set of distinct coset representatives for $H_p^\circ/H_p^\circ(\lambda)$
         $$\left\{\left[\begin{smallmatrix}
            a & -D\\
            1 & a
        \end{smallmatrix}\right], \left[\begin{smallmatrix}
            1 & -bD\\
            b & 1
        \end{smallmatrix}\right]\ |\ a\in\mathbf{Z}/p^\lambda\mathbf{Z}\ ,\ b\in p\mathbf{Z}/p^\lambda\mathbf{Z}\right\}.$$
        For $\gamma=\left[\begin{smallmatrix}
            a & -D\\
            1 & a
        \end{smallmatrix}\right]$, we have $\gamma s(\lambda)=\left[\begin{smallmatrix} 
                    ap^\lambda & -D\\
                    p^\lambda & a
                \end{smallmatrix}\right].$ It is not hard to check that Iwasawa decomposition gives
               $$ v_p(n_\gamma)=
                \begin{cases}
                    v_p(a)-\lambda\ ,\ &\text{if }v_p(a)\leq \lambda-1\\
                    \infty \ ,\ &\text{if }a=0
                \end{cases}\ \ \ ,\ z_\gamma=\begin{cases}
                    v_p(a)\ ,\ &\text{if }v_p(a)\leq \lambda-1\\
                    \lambda\ ,\ &\text{if }a=0.
                \end{cases}$$
                If on the other hand $\gamma=\left[\begin{smallmatrix}
            1 & -bD\\
            b & 1
        \end{smallmatrix}\right]$, then $\gamma s(\lambda)=\left[\begin{smallmatrix}
                    p^\lambda & -bD\\
                    bp^\lambda & 1
                \end{smallmatrix}\right]$, and this time Iwasawa decomposition gives 
                $$v_p(n_\gamma)=
                \begin{cases}
                    v_p(a)-\lambda\ ,\ &\text{if }v_p(a)\leq \lambda-1\\
                    \infty \ ,\ &\text{if }a=0
                \end{cases}\ \ ,\ \ z_\gamma=0.$$
                The result now follows from a simple counting argument.
       \end{proof}
\end{lem}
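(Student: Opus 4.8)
The statement to prove is \Cref{lemtech}: that for each $\lambda\in\mathbf{Z}_{\geq 0}$ the coset space $H_p^\circ/H_p^\circ(\lambda)$ admits a complete set of distinct representatives partitioned according to the values of $v_p(n_\gamma)$ and $z_\gamma$ appearing in the Iwasawa decomposition $\gamma s(\lambda)=p^{z_\gamma}\left[\begin{smallmatrix} p^{r_\gamma} & \\ & 1\end{smallmatrix}\right]\left[\begin{smallmatrix} 1 & n_\gamma\\ & 1\end{smallmatrix}\right]k_\gamma$. The plan is to begin by writing down an explicit complete set of coset representatives for $H_p^\circ/H_p^\circ(\lambda)$. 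Since $H_p^\circ(\lambda)=H_p^\circ[p^\lambda]$ is the preimage of $Z_G(\mathbf{Z}/p^\lambda\mathbf{Z})$ under reduction mod $p^\lambda$, the quotient $H_p^\circ/H_p^\circ(\lambda)$ is identified with $H(\mathbf{Z}/p^\lambda\mathbf{Z})/Z_G(\mathbf{Z}/p^\lambda\mathbf{Z})=H_Z(\mathbf{Z}/p^\lambda\mathbf{Z})$, which has cardinality $(p+1)p^{\lambda-1}$ for $\lambda\geq 1$. Writing a general element of $H_p^\circ$ in the form $a+b\sqrt{-D}\mapsto\left[\begin{smallmatrix} a & b\\ -bD & a\end{smallmatrix}\right]$ and quotienting by the center, one can take as representatives the matrices $\mata$ for $a\in\mathbf{Z}/p^\lambda\mathbf{Z}$ together with $\matb$ for $b\in p\mathbf{Z}/p^\lambda\mathbf{Z}$; a short check (using that elements with $b\in\mathbf{Z}_p^\times$ are already accounted for by rescaling) confirms these are $(p+1)p^{\lambda-1}$ distinct classes.

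Next I would carry out the Iwasawa decomposition of $\gamma s(\lambda)$ for each type of representative. For $\gamma=\mata$ we compute $\gamma s(\lambda)=\left[\begin{smallmatrix} ap^\lambda & -D\\ p^\lambda & a\end{smallmatrix}\right]$; the point is that the second row $(p^\lambda, a)$ has $p$-adic content $\min(\lambda, v_p(a))$, and pulling out that content and completing to an element of $G_p^\circ$ gives $z_\gamma$ and $v_p(n_\gamma)$. Explicitly, when $v_p(a)\le \lambda-1$ one finds $z_\gamma=v_p(a)$ and $v_p(n_\gamma)=v_p(a)-\lambda<0$; when $a=0$ the bottom row is $(p^\lambda,0)$ so $z_\gamma=\lambda$, $n_\gamma=0$. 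For $\gamma=\matb$ with $b\in p\mathbf{Z}_p$ we get $\gamma s(\lambda)=\left[\begin{smallmatrix} p^\lambda & -bD\\ bp^\lambda & 1\end{smallmatrix}\right]$; here the bottom row $(bp^\lambda, 1)$ already has a unit entry, so $z_\gamma=0$ always, and $v_p(n_\gamma)=v_p(b)-\lambda$ when $b\ne 0$ (with $v_p(b)\ge 1$), while $b=0$ gives $n_\gamma=0$, $z_\gamma=0$. Once these two computations are in hand, the asserted partition is obtained by a counting argument: among the $\mata$-representatives, those with $v_p(a)=i$ for fixed $0\le i\le\lambda-1$ number $\varphi(p^{\lambda-i})$ and contribute $(v_p(n_\gamma),z_\gamma)=(i-\lambda, i)$, while $a=0$ gives the unique element with $(n_\gamma, z_\gamma)=(0,\lambda)$; among the $\matb$-representatives, those with $v_p(b)=i$ for $1\le i\le\lambda$ — equivalently $v_p(n_\gamma)=i-\lambda$ for $1\le i\le \lambda$, i.e.\ shifting indices, $\varphi(p^{\lambda-i})$ elements with $v_p(n_\gamma)=i-\lambda$ for $0\le i\le\lambda-1$ — all have $z_\gamma=0$, and $b=0$ gives the unique element with $(n_\gamma, z_\gamma)=(0,0)$. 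Reconciling the two index conventions yields exactly the three bullet points in the statement, after checking the total count $\varphi(p^\lambda)+2\sum_{i=1}^{\lambda-1}\varphi(p^{\lambda-i})+2=(p+1)p^{\lambda-1}$.

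The main obstacle, and the only genuinely delicate point, is performing the Iwasawa decomposition correctly and uniformly in all the boundary cases — in particular being careful about which entry controls the $p$-content of the bottom row, and verifying that the leftover matrix (after extracting $p^{z_\gamma}$ and the diagonal and unipotent parts) indeed lands in $G_p^\circ$ rather than merely in $G_p$. This requires the observation that for a row vector over $\mathbf{Z}_p$ with content $p^m$, dividing by $p^m$ produces a primitive vector that extends to a matrix in $\GL_2(\mathbf{Z}_p)$, and that the resulting $n_\gamma$ is well-defined modulo $\mathbf{Z}_p$ so its valuation (when negative) is unambiguous. Everything else is bookkeeping with Euler's totient function, so I would relegate the full matrix manipulations to the appendix and present only the two key decompositions and the counting table.
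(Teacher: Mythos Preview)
Your proposal is correct and follows essentially the same approach as the paper's own proof: you use the same explicit set of coset representatives $\mata$ for $a\in\mathbf{Z}/p^\lambda\mathbf{Z}$ and $\matb$ for $b\in p\mathbf{Z}/p^\lambda\mathbf{Z}$, compute the Iwasawa decomposition of $\gamma s(\lambda)$ in each case to read off $z_\gamma$ and $v_p(n_\gamma)$, and finish with the same counting argument. Your sketch in fact supplies slightly more detail than the paper (which simply states the resulting values of $z_\gamma$ and $v_p(n_\gamma)$ and says ``the result now follows from a simple counting argument''); the only thing to tidy is the index range in your treatment of the $\matb$ representatives, where $v_p(b)$ runs over $1\le i\le\lambda-1$ (with $b=0$ handled separately) rather than $1\le i\le\lambda$.
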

\bibliography{citation} 

\begin{thebibliography}{BZSV24}

\bibitem[BL95]{barthel1995modular}
Laure Barthel and Ron Livn{\'e}.
\newblock Modular representations of gl2 of a local-field: The ordinary, unramified case.
\newblock {\em Journal of Number Theory}, 55(1):1--27, 1995.

\bibitem[Bum97]{bump_1997}
Daniel Bump.
\newblock {\em Automorphic Forms and Representations}.
\newblock Cambridge Studies in Advanced Mathematics. Cambridge University Press, 1997.

\bibitem[BZSV24]{benzvi2024relativelanglandsduality}
David Ben-Zvi, Yiannis Sakellaridis, and Akshay Venkatesh.
\newblock Relative langlands duality, 2024.

\bibitem[DS05]{diamond2005first}
Fred Diamond and Jerry~Michael Shurman.
\newblock {\em A first course in modular forms}, volume 228.
\newblock Springer, 2005.

\bibitem[Gel76]{gelbart2006automorphic}
Stephen Gelbart.
\newblock Automorphic forms and artin's conjecture.
\newblock In {\em Modular Functions of One Variable VI: Proceedings International Conference, University of Bonn, Sonderforschungsbereich Theoretische Mathematik July 2--14, 1976}, pages 241--276. Springer, 1976.

\bibitem[Gro20]{grossi2020norm}
Giada Grossi.
\newblock On norm relations for asai-flach classes, 2020.

\bibitem[HS01]{harris2001note}
Michael Harris and Anthony~J Scholl.
\newblock A note on trilinear forms for reducible representations and beilinson's conjectures.
\newblock {\em Journal of the European Mathematical Society}, 3(1):93--104, 2001.

\bibitem[JL70]{jacquet2006automorphic}
Herv{\'e} Jacquet and Robert~P Langlands.
\newblock {\em Automorphic Forms on GL (2): Part 1}, volume 114.
\newblock Springer, 1970.

\bibitem[JPSS83]{jacquet1983rankin}
Herv{\'e} Jacquet, Ilya~I Piatetskii-Shapiro, and Joseph~A Shalika.
\newblock Rankin-selberg convolutions.
\newblock {\em American journal of mathematics}, 105(2):367--464, 1983.

\bibitem[KMS03]{2003}
SHIN-ICHI KATO, ATSUSHI MURASE, and TAKASHI SUGANO.
\newblock Whittaker-shintani functions for orthogonal groups.
\newblock {\em Tohoku Mathematical Journal, Second Series}, 55(1):1--64, 2003.

\bibitem[Loe21]{loeffler2021gross}
David Loeffler.
\newblock Gross--prasad periods for reducible representations.
\newblock In {\em Forum Mathematicum}, volume~33, pages 1169--1177. De Gruyter, 2021.

\bibitem[LSZ21a]{loeffler2021euler}
David Loeffler, Chris Skinner, and Sarah~Livia Zerbes.
\newblock Euler systems for gsp(4), 2021.

\bibitem[LSZ21b]{Loeffler_2021}
David Loeffler, Christopher Skinner, and Sarah~Livia Zerbes.
\newblock An euler system for {GU}(2,~1).
\newblock {\em Mathematische Annalen}, 382(3-4):1091--1141, jul 2021.

\bibitem[LW12]{loeffler2012computation}
David Loeffler and Jared Weinstein.
\newblock On the computation of local components of a newform.
\newblock {\em Mathematics of Computation}, 81(278):1179--1200, 2012.

\bibitem[MS91]{murase1991whittaker}
Atsushi Murase and Takashi Sugano.
\newblock Whittaker-shintani functions on the symplectic group of fourier-jacobi type.
\newblock {\em Compositio Mathematica}, 79(3):321--349, 1991.

\bibitem[MS96]{murase1996shintani}
Atsushi Murase and Takashi Sugano.
\newblock Shintani functions and automorphic l-functions for gl (n) dedicated to professor hideo shimizu on his sixtieth birthday.
\newblock {\em Tohoku Mathematical Journal, Second Series}, 48(2):165--202, 1996.

\bibitem[MS16]{minguez:hal-01176198}
A.~Minguez and V.~Secherre.
\newblock {Unramified -modular representations of GLn(F) and its inner forms}.
\newblock {\em {International Mathematics Research Notices}}, 2016.

\bibitem[MW10]{moeglin2010conjecture}
Colette M{\oe}glin and Jean-Loup Waldspurger.
\newblock La conjecture locale de gross-prasad pour les groupes sp$\backslash$'eciaux orthogonaux: le cas g$\backslash$'en$\backslash$'eral.
\newblock {\em arXiv preprint arXiv:1001.0826}, 2010.

\bibitem[Pra90]{prasad1990trilinear}
Dipendra Prasad.
\newblock Trilinear forms for representations of gl(2) and local epsilon factors.
\newblock {\em Compositio Mathematica}, 75(1):1--46, 1990.

\bibitem[Sai93]{saito1993tunnell}
Hiroshi Saito.
\newblock On tunnell’s formula for characters of $ gl (2) $.
\newblock {\em Compositio Mathematica}, 85(1):99--108, 1993.

\bibitem[Sak13]{sakellaridis2013spherical}
Yiannis Sakellaridis.
\newblock Spherical functions on spherical varieties, 2013.

\bibitem[Ser12]{serre2012course}
Jean-Pierre Serre.
\newblock {\em A course in arithmetic}, volume~7.
\newblock Springer Science \& Business Media, 2012.

\bibitem[Shi76]{shintani1976explicit}
Takuro Shintani.
\newblock On an explicit formula for class-1 “whittaker functions” on gln over \_?? \_-adic fields.
\newblock {\em Proceedings of the Japan Academy}, 52(4):180--182, 1976.

\bibitem[Vig89]{vigneras1989representations}
Marie-France Vign{\'e}ras.
\newblock Repr{\'e}sentations modulaires de $ gl (2, f) $ en caract{\'e}ristique $ l $, $ f $ corps $ p $-adique, $ p\ne l$.
\newblock {\em Compositio Mathematica}, 72(1):33--66, 1989.

\bibitem[Wal80]{Waldspurger}
J.-L. Waldspurger.
\newblock Correspondance de {S}himura.
\newblock {\em J. Math. Pures Appl. (9)}, 59(1):1--132, 1980.

\end{thebibliography}
\bibliographystyle{alpha}

\noindent\textit{Mathematics Institute, Zeeman Building, University of Warwick, Coventry CV4 7AL,
England}.\\
\textit{Email address}: Alexandros.Groutides@warwick.ac.uk
\end{document}